\let\amsamp=&
\newtheorem{thm}{Theorem}[section]
\newtheorem{prop}[thm]{Proposition}
\newtheorem{cor}[thm]{Corollary}
\newtheorem{lem}[thm]{Lemma}
\theoremstyle{definition}
\newtheorem{define}[thm]{Definition}
\theoremstyle{remark}
\newtheorem{rem}[thm]{Remark}
\newtheorem{example}[thm]{Example}
\newtheorem{construction}[thm]{Construction}
\newcommand{\Z}{\mathbb{Z}}
\newcommand{\F}{\mathbb{F}}
\newcommand{\mc}[1]{\mathcal{{#1}}}
\newcommand{\p}{\partial}
\newcommand{\biota}{\bar{\iota}}
\newcommand{\im}{\mathrm{im}}
\newcommand{\id}{\mathrm{id}}
\newcommand{\C}{\mathcal{C}}
\newcommand{\A}{\mathcal{A}}
\newcommand{\sh}{\mathrm{sh}}
\newcommand{\rk}{\mathrm{rk}}
\newcommand{\gr}{\mathrm{gr}}
\newcommand{\R}{\mathcal{R}}
\newcommand{\I}{\mathfrak{I}}
\newcommand{\hI}{\widehat{\mathfrak{I}}}
\newcommand{\Mat}{\mathrm{Mat}}
\newcommand{\Cx}{\mathrm{Cx}}
\newcommand{\Ob}{\mathrm{Ob}}
\newcommand{\Mor}{\mathrm{Mor}}
\newcommand{\CI}{\mathcal{C} \mathcal{I}}
\newcommand{\tI}[1]{{#1} \tilde{\mathcal{I}}}
\newcommand{\tCI}{\mathcal{C} \tilde{\mathcal{I}}}
\title{Almost $\iota$-complexes as immersed curves}
\author{Daniel Rostovtsev}
\address{Department of Mathematics\\California Institute of Technology\\  Pasadena, CA 91125, USA}
\email{drostovt@caltech.edu}
\begin{document}
	
\maketitle
		
\begin{abstract}
	Here the existence of a new homomorphism $P_\omega : \Theta_{\Z}^3 \to \Z$ is proven and the existence of a $\Z^{\infty}$ summand in $\Theta_{\Z}^3$ is reproven. This is done by approximating the involutive Heegaard Floer complexes of homology 3-spheres with immersed curves on the twice punctured disk.
\end{abstract}

\section{Introduction}

The classification of the integer homology cobordism group $\Theta_\Z^3$ is a major open problem in low dimensional topology. 
It was initially conjectured that the Rokhlin homomorphism $\mu: \Theta_\Z^3 \to \Z_2$ was an isomorphism before results in gauge theory identified an infinite subgroup \cite{fintushel_stern85} and then a $\Z^{\infty}$ subgroup \cite{furata, fintushel_stern90}. In 2002, it was shown with Yang-Mills theory that $\Theta_\Z^3$ has a $\Z$ summand \cite{froyshov}. 
Recently, Dai, Hom, Stoffregen and Truong used the involutive Heegaard Floer complexes defined in \cite{hendricks2016connected, Hendricks_2017} to prove the existence of a $\Z^\infty$ summand in $\Theta_\Z^3$ \cite{dai2018infiniterank}.

In \cite{hendricks2016connected}, Hendricks, Manolescu and Zemke define the group of $\iota$-complexes $\I$ to be all complexes $(C, \p, \iota)$ with a single $U$-tower in homology and $\iota^2 \simeq \id$ modulo local equivalence. The group operation of $\I$ is the tensor product of complexes. It is shown that the map $h: \Theta_{\Z}^3 \to \I$ taking $Y$ to $(CF^-(Y), \iota)$ is a homomorphism. Since it is unknown how to classify $\I$, Dai, Hom, Stoffregen, and Truong classified the group of almost $\iota$-complexes $\hI$, an approximation of $\I$ mod $U$, instead.

\begin{thm}[\cite{dai2018infiniterank}, Theorem 6.2] \label{thm:hi_classification}
	Every almost $\iota$-complex in $\hI$ is locally equivalent to a standard complex of the form $\C(a_1, b_2, \cdots, a_{2n-1}, b_{2n})$ where $a_i \in \{-,+\}$ and $b_i \in \Z \setminus \{ 0 \}$. 
\end{thm}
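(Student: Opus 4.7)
The plan is to leverage the correspondence between almost $\iota$-complexes and immersed multicurves on the twice-punctured disk promised by the abstract. First, I would make this dictionary precise: given $(C, \partial, \iota) \in \hI$, produce an immersed multicurve $\gamma_C$ on $D \setminus \{p_0, p_1\}$ (possibly equipped with a local system) whose isotopy class recovers $C$ up to local equivalence. The two punctures should be arranged so that one records the $U$-action while the other records the action of $\iota$ modulo $U$; the almost-involution condition $\iota^2 \simeq \id$ then translates into a geometric condition on how $\gamma_C$ wraps around the second puncture.

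Next, I would attach to each tuple $(a_1, b_2, \ldots, a_{2n-1}, b_{2n})$ a concrete ``staircase'' curve: the integers $b_i \in \Z \setminus \{0\}$ record signed intersection numbers with a reference arc between the two punctures and the signs $a_i \in \{+,-\}$ record whether the turn at each corner is clockwise or counterclockwise. A direct generator-by-generator check should verify that the complex extracted from this curve is precisely $\C(a_1, b_2, \ldots, a_{2n-1}, b_{2n})$.

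The core step is then a normal-form result: every $\gamma_C$ is locally equivalent to such a staircase. I would induct on a complexity invariant (for instance the number of transverse intersections of $\gamma_C$ with a fixed reference arc, minimized over the local equivalence class), using a small list of simplification moves -- bigon removal, puncture-straddling isotopy, and absorption of null-homotopic components -- to show that any non-staircase curve admits a complexity-reducing move. Once the curve is in staircase form, reading off the $(a_i, b_i)$ data finishes the proof.

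The main obstacle will be the normal-form reduction in step three. Local equivalence is a one-sided relation, so each simplification move must be shown to come from a genuine local map of almost $\iota$-complexes rather than a two-sided homotopy equivalence. I expect the key technical input to be a careful analysis of which pairs of arcs can be merged or split while preserving the almost-involution structure; this is where the restriction $b_i \neq 0$ and the alternating role of the $a_i$ should appear naturally, since degenerate choices either correspond to null-homotopic components that can be absorbed or to ``redundant'' corners that collapse into a single longer step of the staircase.
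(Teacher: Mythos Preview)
Your overall architecture matches the paper: pass to an immersed multicurve on the twice-punctured disk and read off a standard complex.  But the paper's normal-form step is organized quite differently from your proposed inductive simplification, and the difference exposes two genuine gaps in your plan.

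First, you never invoke the defining condition $U^{-1}H_*(C)\cong\F[U,U^{-1}]$.  In the paper this is the entire mechanism for singling out a staircase: after applying the arrow-sliding algorithm of \cite{hanselman2016bordered,Zibrowius_2020} as a black box to obtain a genuine multicurve $\gamma(\tCI)=\{(\gamma_i,X_i)\}$, the rank-one condition forces exactly one component $\gamma_0$ to meet the $U$-puncture, forces its local system to be trivial, and forbids it from ever returning to the $U$-puncture.  The remaining components are then discarded \emph{algebraically} via a splitting lemma (inclusion and projection are honest local equivalences), not by geometric curve moves.  Your ``absorption of null-homotopic components'' will not do this job: the components to be thrown away are typically essential closed curves or arcs with both ends at one puncture, not null-homotopic bigons.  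Relatedly, your worry that each simplification move must be a one-sided local map is misplaced---all the curve manipulations in the paper are chain \emph{isomorphisms}; local equivalence enters only once, at the splitting step.

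Second, you are missing the constraint that makes $\gamma_0$ a standard complex rather than an arbitrary arc from the $U$-puncture to the $Q$-puncture.  The paper first replaces $\C$ by a \emph{reduced} representative so that $\omega^2\equiv 0\bmod U$; this is what allows the lift to an honest $\mc{R}$-complex and, crucially, forces every arc around the $Q$-puncture to have length exactly one (Lemma~\ref{lem:one_q_loop}).  With that in hand, an elementary walk along $\gamma_0$ shows it alternates between a single $Q$-loop (giving $a_i\in\{+,-\}$) and some number of $U$-loops (giving $b_i\in\Z\setminus\{0\}$), terminating at the $Q$-puncture.  Your sketch gestures at ``a geometric condition on how $\gamma_C$ wraps around the second puncture'' but does not identify this length-one restriction, and without it there is no reason your staircase parameters are finite integers rather than something wilder.
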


Dai, Hom, Stoffregen, and Truong defined a pivotal map $P: \hI \to Z$ which maps any local equivalence class to the grading of a terminal generator in the standard complex representative. They then proved that 
\begin{thm}[\cite{dai2018infiniterank}, Theorem 7.17] \label{thm:p_homo}
	$P$ is a homomorphism from $\hI$ to $\Z$.
\end{thm}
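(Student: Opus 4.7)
The plan is to prove additivity, i.e., $P([\C_1 \otimes \C_2]) = P([\C_1]) + P([\C_2])$ for all local equivalence classes in $\hI$. By Theorem \ref{thm:hi_classification}, we may choose standard representatives $\C_i = \C(a_1^{(i)}, b_2^{(i)}, \ldots, a_{2n_i-1}^{(i)}, b_{2n_i}^{(i)})$ with terminal generators $t_i$ of grading $P([\C_i])$. Since gradings are additive under the tensor product, $t_1 \otimes t_2 \in \C_1 \otimes \C_2$ lives in grading $P([\C_1]) + P([\C_2])$, and it suffices to show that $t_1 \otimes t_2$ descends to a terminal generator of a standard representative of $[\C_1 \otimes \C_2]$.

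The key step is to give an intrinsic, local-equivalence-invariant characterization of the terminal generator. A natural candidate: the terminal generator is the extremal vertex of the zigzag of $\p$- and $(\iota+\id)$-arrows produced by peeling away the $U$-tower and iteratively absorbing short arrows into longer ones, as in the DHST reduction procedure. With such a characterization in hand, one then runs the reduction on $\C_1 \otimes \C_2$, using the Leibniz-type formula for $\p_1 \otimes \id + \id \otimes \p_2$ and the tensor involution $\iota_1 \otimes \iota_2$, and tracks the image of $t_1 \otimes t_2$. Each elementary cancellation either leaves the class of $t_1 \otimes t_2$ unchanged or absorbs it into a longer arrow whose corresponding terminal generator is in the same grading, so the final terminal generator of the standard form has grading $P([\C_1]) + P([\C_2])$ as required.

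The main obstacle is stabilizing the ``terminal generator'' across local equivalence and cancellation moves: a priori, one could imagine that during the reduction of $\C_1 \otimes \C_2$ the descendant of $t_1 \otimes t_2$ is killed outright, or is replaced by a generator in a different grading. To rule this out, one needs either a clean basis-free characterization of $P$, or a careful combinatorial argument that tracks $t_1 \otimes t_2$ through the DHST algorithm. In the spirit of the paper, a cleaner route may be available via the immersed curve model on the twice-punctured disk: if standard complexes correspond to standard arcs, the tensor product corresponds to a geometric sum operation, and $P$ corresponds to a signed height of a distinguished endpoint, then additivity of $P$ becomes a manifest additivity of endpoint heights under the curve operation, and the algebraic tracking argument is subsumed into the geometry.
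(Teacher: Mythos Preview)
Your plan correctly identifies the target---show that $t_1\otimes t_2$ survives to become the terminal generator of the standard representative of $\C_1\otimes\C_2$---and you honestly flag the obstacle. But the proposal does not actually clear that obstacle: the sentence ``each elementary cancellation either leaves the class of $t_1\otimes t_2$ unchanged or absorbs it into a longer arrow whose corresponding terminal generator is in the same grading'' is the entire content of the theorem, and it is asserted rather than proved. Without a concrete invariant characterization of the terminal generator, there is no reason the DHST algorithm must land $t_1\otimes t_2$ (or something in its grading) at the end of the chain rather than somewhere in the middle, or kill it along the way. The closing paragraph gestures at the immersed-curve picture but does not say what ``endpoint height'' means or why it is additive.

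The paper fills exactly this gap, and it is worth seeing how. First, it gives the basis-free characterization you ask for: the terminal generator $x_f(\C)$ is the unique generator whose class spans the single $\F[Q]$-tower in $H_*(\tCI;\F[Q])$ (Lemma~\ref{lem:standard_complex_fu_fq_homology_generators}). By K\"unneth, $[x_f(\C_1)\otimes x_f(\C_2)]$ therefore generates the $\F[Q]$-tower in the tensor product. Second---and this is the real work---the paper shows that the reduction from $\mc{F}(\tI{\C_1}'\otimes\tI{\C_2})$ to the multicurve $\gamma(\tI{\C_1}'\otimes\tI{\C_2})$ is a \emph{bigrading-preserving} chain isomorphism (Proposition~\ref{prop:bigrading_preserving_right_r_module_iso}). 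This requires a nontrivial trick: the tensor differential is not homogeneous for the na\"ive bigrading on the left $\mc{R}$-module, because $\omega_{\C_1*\C_2}$ does not obey Leibniz. The fix is to pass to the twisted right module $\tI{\C_1}'$ (Section~\ref{subsection:a_tensor_product_formula}), where the differential \emph{is} homogeneous of degree $-1$, and then to check move-by-move that S1, S2, T1--T3, M1--M3 all preserve the bigrading. Only after this does the $\F[Q]$-tower argument pin down the grading of the final generator of $\gamma_0$, yielding $P(\C_1\otimes\C_2)=P(\C_1)+P(\C_2)$.
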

This allowed for the definition of maps $\phi_n: \hI \to \Z$ on each local equivalence class by $\phi_n(\C) = \# \{ b_i = n \} - \# \{ b_i = -n \}$. It was then shown using Theorem~\ref{thm:p_homo} that each $\phi_n$ is a homomorphism.
\begin{thm}[\cite{dai2018infiniterank}, Theorem 7.18] \label{thm:phi_n_homo}
	The maps $\phi_n : \hI \to \Z$ are homomorphisms.
\end{thm}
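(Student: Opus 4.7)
My plan is to realize $\phi_n$ as a $\Z$-linear combination of homomorphisms obtained by applying $P$ (Theorem~\ref{thm:p_homo}) to standard form representatives (Theorem~\ref{thm:hi_classification}) that have been modified in controlled ways.

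By Theorem~\ref{thm:hi_classification}, every local equivalence class in $\hI$ has a standard representative $\C(a_1, b_1, \ldots, a_{2k-1}, b_{2k})$, and the multiset $\{b_i\}$ is uniquely determined by the class; in particular $\phi_n$ is a well-defined function on $\hI$, so the content of the theorem is that it respects tensor products. For each integer $N \geq 0$ I would introduce a truncation operator $\tau_N$ on standard complexes that replaces each parameter $b_i$ by $\sign(b_i)\min(|b_i|, N)$ while leaving the $a_i$ untouched. The central technical step is to show that $\tau_N$ descends to a group homomorphism $\tau_N: \hI \to \hI$; granting this, each composition $P_N := P \circ \tau_N$ is a homomorphism $\hI \to \Z$ by Theorem~\ref{thm:p_homo}.

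Next, I would extract an explicit formula for $P$ evaluated on a standard complex as a signed $\Z$-linear combination of the $b_i$'s, with signs determined by the pattern of the $a_i$'s. The difference $P_N - P_{N-1}$ then depends only on those $b_i$'s with $|b_i| \geq N$, and is a signed count of them. By taking a further difference across successive thresholds, or by combining with analogous variants of the truncation that flip the sign of positive versus negative $b_i$'s above threshold, one should isolate the contributions from $|b_i| = n$ alone. A final bookkeeping step checks that, after cancellation of the position-dependent signs, the resulting integer equals $\#\{b_i = n\} - \#\{b_i = -n\} = \phi_n(\C)$.

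The main obstacle I anticipate is the claim that $\tau_N$ is compatible with the tensor product. The tensor product $\C_1 \otimes \C_2$ of two standard complexes is typically not itself standard, and the reduction procedure underlying Theorem~\ref{thm:hi_classification} can merge or cancel pairs $(a_i, b_i)$ in ways that depend on the full list of $b$-parameters. To handle this I would analyze the reduction moves individually and show that truncating before applying each move yields the same local equivalence class as truncating afterwards, probably by induction on the number of generators. Establishing this compatibility carefully, rather than computing a closed form for the standard form of a tensor product, is where I expect the bulk of the work to lie.
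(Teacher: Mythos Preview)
Your high-level strategy is exactly the paper's: express $\phi_n$ as a $\Z$-linear combination of maps of the form $P \circ (\text{endomorphism of }\hI)$, so that Theorem~\ref{thm:p_homo} forces $\phi_n$ to be a homomorphism. The paper, following \cite{dai2018infiniterank}, does this with the \emph{shift} maps $\sh_n$ (send $b_i \mapsto b_i + \sign(b_i)$ when $|b_i| \geq n$) rather than your truncations $\tau_N$, and the resulting second-difference formula isolating $\phi_n$ is the same as the one you are aiming for. One small correction: the $U$-grading of the final generator, hence $P$, does not depend on the $a_i$'s at all, so no ``position-dependent signs'' enter and plain differencing already yields $\phi_n$.

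The gap is exactly where you anticipate it, and your proposed move-by-move check would not close it. The paper's proof that $\sh_n$ is an endomorphism (Theorem~\ref{thm:sh_n_endo}) runs through the precurve picture: the reduction of $\mc{F}(\tI{(\C_1*\C_2)})$ to its multicurve is a finite sequence of moves S1, S2, T1--T3, M1--M3, each governed only by the \emph{order type} of the $U$-arc lengths (which pairs are strictly ordered, which are equal). The map $\sh_n$ preserves this order type exactly---equal lengths remain equal, strict inequalities remain strict---so the \emph{same} sequence of moves reduces the shifted precurve, and its primitive component is just the shift of the original one. Your truncation $\tau_N$ fails this property: two parameters with $N \leq |b_i| < |b_j|$ collapse to the common value $N$, converting an S1a/S2a step into an S1b/S2b step and altering the downstream reduction. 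So checking that ``truncating before applying each move yields the same local equivalence class as truncating afterwards'' will not go through, and there is no evident repair for truncation. If you replace $\tau_N$ by $\sh_n$, the obstruction disappears and the rest of your outline becomes the paper's proof.
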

It was these $\phi_n$ that finally gave a splitting of $\Theta_\Z^3$ and witnessed an infinite rank summand. 
\begin{cor}[\cite{dai2018infiniterank}, Theorem 1.1] \label{cor:infinite_rank_summand}
	$\Theta_\Z^3$ has a $\Z^\infty$ summand.
\end{cor}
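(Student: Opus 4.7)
The plan is to bundle the homomorphisms $\phi_n \circ h$ into a single map to the direct sum and then verify surjectivity by producing an explicit family of homology spheres whose almost $\iota$-complexes witness each coordinate.

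First I would define
\[
\Phi : \Theta_\Z^3 \longrightarrow \bigoplus_{n \geq 1} \Z, \qquad \Phi(Y) = \bigl( \phi_n(h(Y)) \bigr)_{n \geq 1}.
\]
By Theorem~\ref{thm:hi_classification}, any almost $\iota$-complex is locally equivalent to a standard complex $\C(a_1, b_2, \ldots, a_{2n-1}, b_{2n})$ with only finitely many $b_i$, so each class has $\phi_n = 0$ for all but finitely many $n$; hence $\Phi$ does land in the direct sum rather than the full product. The fact that $\Phi$ is a group homomorphism is immediate from Theorem~\ref{thm:phi_n_homo} together with the homomorphism $h : \Theta_\Z^3 \to \I \to \hI$ of Hendricks--Manolescu--Zemke.

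Second, I would produce a sequence of integer homology spheres $\{ Y_n \}_{n \geq 1}$ whose almost $\iota$-complexes are standard complexes with a distinguished parameter of absolute value $n$. Concretely, the goal is to arrange
\[
\phi_n(h(Y_n)) \neq 0, \qquad \phi_m(h(Y_n)) = 0 \text{ for all } m > n,
\]
so that the matrix $(\phi_m(h(Y_n)))_{m,n}$ is lower triangular with nonzero diagonal. Natural candidates are connected sums or surgeries on torus knots, e.g.\ spheres of the form $\Sigma(p, q, pqk \pm 1)$, for which the (involutive) Heegaard Floer complex is known explicitly from the $p/q$-staircase model and one can read off the $b_i$'s directly. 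Once this triangular structure is in hand, a standard linear algebra argument (invert the triangular matrix over $\Q$ and clear denominators) shows that the $\Z$-span of the $\Phi(Y_n)$ contains an infinite independent family, hence that $\Phi$ has image of infinite rank.

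Third, I would upgrade ``infinite rank image'' to ``$\Z^\infty$ summand.'' Because $\bigoplus_n \Z$ is a free abelian group, any subgroup generated by a triangular family as above is itself free of countable rank, and a section of $\Phi$ restricted to that subgroup can be built column by column using the diagonal entries (after replacing each $Y_n$ by a suitable rational combination witnessed topologically by taking connected sums of the $Y_m$ with $m \leq n$). Splitting $\Phi$ over this free subgroup gives the desired decomposition $\Theta_\Z^3 \cong \Z^\infty \oplus \ker(\Phi|_{\text{complement}})$.

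The main obstacle is the middle step: identifying a concrete family of homology spheres whose standard-complex parameters $(b_i)$ can be computed, and verifying the triangularity. All the algebra around $\Phi$ is formal from the earlier theorems, but the existence of ``enough'' almost $\iota$-complexes to realize each $\phi_n$ independently is a genuinely geometric input and is where the analysis of staircase complexes for Seifert fibered spaces (as in \cite{dai2018infiniterank}) is essential.
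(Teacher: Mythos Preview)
Your outline is correct and matches the approach the paper adopts: the paper does not give its own argument for Corollary~\ref{cor:infinite_rank_summand} but explicitly defers to \cite{dai2018infiniterank}, and what you describe is precisely the strategy there --- package the $\phi_n$ into a homomorphism to $\bigoplus_n \Z$, exhibit an explicit family of Seifert fibered homology spheres whose standard-complex parameters give a (lower-)triangular matrix in the $\phi_n$, and split the resulting surjection onto a free abelian group.

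One small streamlining of your third step: you do not need to build a section ``column by column'' or pass through rational combinations. The image of $\Phi$ is a subgroup of the free abelian group $\bigoplus_n \Z$, hence is itself free abelian; the triangular computation shows it has infinite rank; and any surjection onto a free (equivalently, projective) abelian group splits. This gives $\Theta_\Z^3 \cong \ker\Phi \oplus \Z^\infty$ in one stroke. Your identification of the genuine content --- the geometric computation of the almost $\iota$-complexes for an explicit family of Brieskorn spheres --- is exactly right.
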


Streamlined proofs of Theorems \ref{thm:hi_classification}, \ref{thm:p_homo} and \ref{thm:phi_n_homo} are given here using the Fukaya category of the twice punctured disk. These proofs rely on the precurves described in \cite{Zibrowius_2020} and \cite{kotelskiy2019immersed}. From there, the proof of Corollary~\ref{cor:infinite_rank_summand} is the same as in \cite{dai2018infiniterank}. The precurve perspective also gives a new homomorphism $P_\omega : \hI \to \Z$ defined by $P_\omega(\C) = \#\{a_i = +\} - \#\{ a_i = - \}$. The existence of this homomorphism $P_\omega$ was conjectured by Dai, Hom, Stoffregen and Truong.

\begin{thm} \label{thm:p_omega_homo}
	The map $P_\omega : \hI \to \Z$ is a homomorphism.
\end{thm}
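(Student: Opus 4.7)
The plan is to interpret $P_\omega$ as a signed intersection number of the immersed (pre)curve representing an almost $\iota$-complex with a distinguished reference arc $\omega$ on the twice-punctured disk, and then show that this intersection number behaves additively under the tensor product operation on $\hI$.

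First, I would unpack the dictionary between standard complexes and immersed curves set up in the earlier sections of the paper. A standard complex $\C(a_1, b_2, \ldots, a_{2n-1}, b_{2n})$ corresponds to a zigzag curve whose ``horizontal'' segments are indexed by the signs $a_i$ and whose ``vertical'' segments are indexed by the integers $b_i$. Choosing $\omega$ to be an arc dual to the horizontal direction (connecting the two punctures), each horizontal segment of the zigzag crosses $\omega$ exactly once, contributing $+1$ if $a_i = +$ and $-1$ if $a_i = -$. So defining $P_\omega(\C)$ as the algebraic intersection number $\gamma_\C \cdot \omega$ will immediately recover the stated formula on every standard complex.

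Second, I would verify that this algebraic intersection number descends to $\hI$. By Theorem~\ref{thm:hi_classification} every class has a standard representative, so it suffices to check that two standard complexes representing the same local equivalence class have the same intersection number with $\omega$. Equivalently, the finger moves and isotopies realizing a local equivalence at the curve level should preserve the signed count of crossings with $\omega$; this is expected because $\omega$ is a properly embedded arc between the punctures, and the relevant curve moves either avoid $\omega$ or cross it in cancelling pairs.

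The third and main step is additivity under tensor product. Translating the pairing of $\iota$-complexes into the Fukaya category of the twice-punctured disk, I would argue that the curve $\gamma_{\C_1 \otimes \C_2}$ (or any local equivalence representative) realizes its crossings with $\omega$ as a disjoint union of the crossings of $\gamma_{\C_1}$ and $\gamma_{\C_2}$ with $\omega$, from which $P_\omega(\C_1 \otimes \C_2) = P_\omega(\C_1) + P_\omega(\C_2)$ follows. The main obstacle here will be handling the reduction to a standard representative: the raw curve obtained from the pairing is not itself a zigzag, and one must control the simplification process --- analogous to the argument used to prove Theorem~\ref{thm:p_homo} --- to ensure the algebraic intersection with $\omega$ is preserved throughout. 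Because $\omega$ lies transverse to the direction in which simplifications act, this should be a fairly direct adaptation of the bookkeeping used for $P$, with the essential point being that cancellations of pairs of intersection points contribute $+1$ and $-1$ to $\gamma \cdot \omega$ and hence are invisible to $P_\omega$.
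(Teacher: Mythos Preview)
Your geometric interpretation of $P_\omega$ as a signed intersection number with a reference arc is morally the same as what the paper does: the paper introduces a $Q$-grading on $\tCI$ (with $\gr_Q(\omega)=1$, $\gr_Q(Q)=-2$) and identifies $P_\omega(\C)$ with $\gr_Q$ of the final generator $x_f(\C)$, which is exactly the net signed count of $Q\omega$-crossings along the curve.

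However, your additivity step has a genuine gap. The claim that ``the curve $\gamma_{\C_1\otimes\C_2}$ realizes its crossings with $\omega$ as a disjoint union of the crossings of $\gamma_{\C_1}$ and $\gamma_{\C_2}$'' is false as stated. The involution on $\C_1*\C_2$ is $\biota_1\otimes\biota_2$, so
\[
\omega_{\C_1*\C_2} \;=\; \omega_1\otimes 1 + 1\otimes\omega_2 + \omega_1\otimes\omega_2,
\]
which is \emph{not} Leibniz. The extra term $\omega_1\otimes\omega_2$ produces $Q$-arrows in $\mc{F}(\tI{(\C_1*\C_2)})$ that are not accounted for by either factor, and the precurve has $mn$ generators rather than $m+n$. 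There is no sense in which the crossings decompose as a disjoint union before simplification, and after simplification you are left with only the primitive curve $\gamma_0$, having discarded many components whose intersection numbers with your arc you have not controlled.

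The paper resolves this with two ingredients you are missing. First, a tensor product formula: it introduces a twisted right $\mc{R}$-module structure $\tCI'$ (where $x\cdot Q = Q\biota x$) and proves $\tI{(\C_1*\C_2)}\cong \tI{\C_1}'\otimes\tI{\C_2}$. In this twisted picture the differential \emph{is} Leibniz, $\delta^1 = \p_1\otimes 1 + 1\otimes\p_2 + (\omega_1\otimes 1 + 1\otimes\omega_2)\cdot Q$, and the product bigrading is genuinely homogeneous. Second, the paper checks move-by-move that the precurve simplification (S1, S2, T1--T3, M1--M3) preserves this bigrading, and then identifies $x_f$ of the primitive curve via $H_*(-;\F[Q])$ and the K\"unneth formula to conclude $\gr_Q(x_f(\gamma_0)) = \gr_Q(x_f(\C_1)) + \gr_Q(x_f(\C_2))$. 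Your heuristic that ``cancellations contribute $+1$ and $-1$'' is not a substitute for this: you would still need to explain why the intersection number of the \emph{primitive component alone} equals the sum, which is exactly what the bigrading and homological identification of $x_f$ accomplish.
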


\begin{thm} \label{thm:p_omega_independent}
	$P_\omega : \hI \to \Z$ is surjective and not in the span of $\{ \phi_n \}$. 
\end{thm}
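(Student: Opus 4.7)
The approach is to leverage the classification of $\hI$ by standard complexes given in Theorem \ref{thm:hi_classification}, together with the orthogonality of $P_\omega$ and the $\phi_n$: the value of $P_\omega$ depends only on the sign entries $a_i$, whereas each $\phi_n$ depends only on the integer entries $b_i$. This suggests that very short standard complexes will suffice to witness both assertions.

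For surjectivity, I would first consider the standard complex $\C(+, 1)$, corresponding to $n = 1$ with $a_1 = +$ and $b_2 = 1$. Directly from the definition of $P_\omega$, we have $P_\omega(\C(+, 1)) = 1$. Since $P_\omega$ is a homomorphism by Theorem \ref{thm:p_omega_homo}, its image is a subgroup of $\Z$ containing $1$ and therefore equals $\Z$.

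For independence from $\{\phi_n\}$, the plan is to compare $\C(+, 1)$ with $\C(-, 1)$. These share the same $b$-tuple, so $\phi_n(\C(+, 1)) = \phi_n(\C(-, 1))$ for every $n$ (namely $\phi_1 = 1$ and $\phi_n = 0$ for $n \neq 1$). On the other hand, $P_\omega(\C(+, 1)) = 1$ and $P_\omega(\C(-, 1)) = -1$. If $P_\omega$ were a linear combination $\sum_n c_n \phi_n$, then evaluating on both complexes would force $c_1 = 1$ and $c_1 = -1$ simultaneously, a contradiction.

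The main subtlety is not the computation itself but the foundational issue of verifying that $\C(+, 1)$ and $\C(-, 1)$ represent genuinely distinct classes in $\hI$, rather than coinciding via some local equivalence not visible at the level of standard tuples. This is guaranteed by Theorem \ref{thm:p_omega_homo}: well-definedness of $P_\omega$ on local equivalence classes, combined with the fact that $P_\omega$ takes different values on the two complexes, forces them to lie in different classes. With that in hand, the two assertions reduce to the short computations above.
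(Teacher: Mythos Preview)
Your proof is correct and follows essentially the same approach as the paper: both evaluate $P_\omega$ and the $\phi_n$ on the length-two standard complexes $\C(\pm,k)$ to obtain surjectivity and the contradiction. Your independence argument is slightly more economical than the paper's---you only need $k=1$ and the observation that $\C(+,1)$ and $\C(-,1)$ have identical $\phi_n$-values, whereas the paper first uses all $\C(+,k)$ to pin down every coefficient before deriving the contradiction from $\C(-,k)$---but the underlying idea is the same.
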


Once the classification is complete, it is shown that the homomorphisms defined in \cite{dai2018infiniterank} count the grading of a special generator in the precurve picture. Theorems~\ref{thm:p_omega_homo} and \ref{thm:p_omega_independent} are proven by endowing precurves from \cite{kotelskiy2019immersed} and \cite{Zibrowius_2020} with a natural bigrading.

The algebro-geometric correspondence in \cite{kotelskiy2019immersed} and \cite{Zibrowius_2020} is motivated by a similar construction of Haiden, Katzarkov, and Kontsevich in \cite{haiden2014flat}. The proof given in \cite{kotelskiy2019immersed} and \cite{Zibrowius_2020} that realizes chain complexes up to homotopy as decorated immersed curves draws heavily on the arrow sliding algorithm given by Hanselman, Rasmussen, and Watson in \cite{hanselman2016bordered}.

\subsection*{Organization}

The correspondence between almost $\iota$-complexes, precurves, and immersed curves is described in section~\ref{section:prelims}. In section~\ref{section:classifying_almost_iota_complexes}, the classification of $\hI$ is reproven by showing that a special immersed curve $\gamma_0(\tCI)$ contains all the information about the local equivalence class of $\C$. 
Finally, all known homomorphisms from $\hI$ to $\Z$ are described in section~\ref{section:homomorphisms} by counting the grading of a special generator in $\gamma_0(\tCI)$.

\subsection*{Acknowledgements} I would most of all like to thank Ian Zemke for taking the time to mentor me this summer, and for his help, encouragement, and advice. I would also like to thank Yi Ni for being a fantastic teacher, the Caltech Summer Faculty Program for sponsoring this research, Claudius Zibrowius for several helpful discussions, and Irving Dai, Jennifer Hom, Matthew Stoffregen and Linh Truong for the suggestion that $P_\omega$ might be a homomorphism. 

\section{Preliminaries} \label{section:prelims}

This section begins with a brief review of almost $\iota$-complexes and concludes with a crash-course in precurves. For a comprehensive description of almost $\iota$-complexes, see \cite{dai2018infiniterank}. For a comprehensive description of precurves, see \cite{kotelskiy2019immersed} and \cite{Zibrowius_2020}.

Here the convention will be that $\mc{R} = \F[U,Q]/(UQ)$. 

\subsection{Almost $\iota$-complexes} \label{subsection:almost_iota_complexes}

Motivated by the involution of the Heegaard Floer complex given in \cite{Hendricks_2017}, Hendricks, Manolescu and Zemke define the group $\I$ of $\iota$-complexes in \cite{hendricks2016connected}. More precisely,

\begin{define}[\cite{hendricks2016connected}, Definition 8.1] \label{def_iota_complex}
	An $\iota$-complex $\C = (C, \iota)$ is given by:
	\begin{enumerate}[$\bullet$]
		\item
		a free, finitely generated $\Z$-graded chain complex $\C$ over $\F[U]$ with 
		$$U^{-1} H_*(C) \cong \F[U, U^{-1}]$$
		where $U$ has degree $-2$ and $U^{-1} H_*(C)$ is supported in even gradings.
		\item
		a grading preserving, $U$-equivariant chain homomorphism $\iota : C \to C$ such that $\iota^2 \simeq \id$. 
	\end{enumerate}
\end{define}

The group $\I$ is the set of $\iota$-complexes modulo local equivalence, which is defined below, with the group operation being tensor product. 

\begin{define}[\cite{hendricks2016connected}, Definition 8.5] \label{def_local_equivalence}
	Two $\iota$-complexes $\C, \C'$ are \textit{locally equivalent} if there exist grading preserving chain maps $f: C \to C', g: C' \to C$ such that 
	$$f \circ \iota \simeq \iota' \circ f, \qquad g \circ \iota' \simeq \iota \circ g$$
	and $f, g$ induce isomorphisms between $U^{-1} H_*(C)$ and $U^{-1} H_*(C')$. 
\end{define}

It can be shown that $\otimes : \I \times \I \to \I$ forms a group operation that respects local equivalence.  In essence, $\I$ contains all possible complexes that could arise as the involutive Heegaard Floer complex $CFI^-$ of some 3-manifold $M$. The classification of $\I$ is not yet complete, although it would be a significant step towards the classification of the homology cobordism group. It is known that $\I$ has torsion free elements and elements which are 2-torsion, but it is not known if $\I$ has $n$-torsion for any $n>2$. 

To further understand $\I$, the authors of \cite{dai2018infiniterank} define a group $\hI$ of almost $\iota$-complexes, which is an approximation of $\I$ modulo $U$ up to local equivalence. To define $\hI$ it is necessary to introduce the concept of a homotopy mod $U$. Compare the following to \cite{dai2018infiniterank}:

\begin{define}
	Two grading preserving $\F[U]$-module homomorphisms $f,g: C \to C'$ are homotopic mod $U$ if there exists an $\F[U]$ module homomorphism $H: C \to C'$ such that $H$ increases grading by one and 
	$$f + g + [H, \p] \in \im U$$
where $[H, \p]$ denotes the commutator of $H$ and $\p$.
\end{define}

$\hI$ is the set of almost $\iota$-complexes (defined below) modulo a version local equivalence which uses homotopy mod $U$.

\begin{define}[\cite{dai2018infiniterank}, Definition 3.2] \label{def_almost_iota_complex}
	An \textit{almost $\iota$-complex} $\C = (C, \biota)$ is given by:
	\begin{enumerate}[$\bullet$]
		\item
		a free, finitely generated $\Z$-graded chain complex $\C$ over $\F[U]$ with 
		$$U^{-1} H_*(C) \cong \F[U, U^{-1}]$$
		where $U$ has degree $-2$ and $U^{-1} H_*(C)$ is supported in even gradings.
		\item
		a grading preserving $\F[U]$ module homomorphism $\biota: C \to C$ such that
		$$ \biota \circ \p + \p \circ \biota \in \im U \qquad \text{ and } \qquad \biota^2 \simeq \id \mod U $$
	\end{enumerate}
\end{define}

\begin{define}[\cite{dai2018infiniterank}, Definition 3.5]
	Two almost $\iota$-complexes $\C_1 = (C_1, \biota_1)$ and $\C_2 = (C_2, \biota_2)$ are \textit{locally equivalent} if there are grading-preserving, $U$-equivariant chain maps $f: C \to C', g: C' \to C$ such that
	$$f  \circ \biota \simeq \biota' f \mod U, \qquad g \circ \biota' \simeq \biota \circ g \mod U$$
	and$f, g$ induce isomorphisms between $U^{-1}H_*(C)$ and $U^{-1}H_*(C')$. 
\end{define}

It can be shown that $\hI$ is a group with respect to tensor product by using the same argument as in \cite{hendricks2016connected} for $\I$. The morphisms in $\hI$ are defined as follows:

\begin{define} [\cite{dai2018infiniterank}, Definition 3.4]
	An \textit{almost $\iota$-morphism} is a grading preserving $U$-equivariant chain map $f : \C \to C'$ between almost $\iota$-complexes such that $f \biota \simeq \biota' f \mod U$.
\end{define}

 Local equivalence classes in $\hI$ have special representatives that Dai, Hom, Stoffregen, and Truong call standard standard complexes. These are defined below. Note the convention that $\omega = 1 + \biota$.

\begin{define} [\cite{dai2018infiniterank}, Definition 4.1]
	A \textit{standard complex} 
	$$\C(a_1, b_2, \cdots, a_{2n-1}, b_{2n})$$ 
	is an almost $\iota$-complex parametrized by $a_i \in \{-, +\}$ and $b_i \in \Z - \{ 0 \}$. It is generated by $T_0, \cdots, T_{2n}$. The $b_i$ specify the differential. Let $\p T_{2i} = U^{b_{2i}} T_{2i-1}$ if $b_{2i}>0$ and let $\p T_{2i-1} = U^{b_{2i}} T_{2i}$ if $b_{2i}<0$. The $a_i$ specify the involution. Let $\omega T_{i} = T_{i-1}$ if $a_i = +$ and let $\omega T_{i-1} = T_{i}$ if $a_i = -$. 
\end{define}

Recall that Dai, Hom, Stoffregen and Truong proved that every almost $\iota$-complex is locally equivalent to a standard complex. This is the content of Theorem~\ref{thm:hi_classification}. 

Standard complexes are convenient to visualize as chains alternating between $\p$ and $\omega$ arrows, which always start with an $\omega$ arrow and end with a $\p$ arrow. An example is given in figure~\ref{fig:local_equiv_counter_example_a} for the standard complex $\C(+,-2)$.

The standard complex representative of an almost $\iota$-complex is difficult to find in general. A significant portion of \cite{dai2018infiniterank} is dedicated to overcoming this obstacle. This motivates the following definition: 

\begin{define}[\cite{dai2018infiniterank}, Definition 3.22] \label{def:reduced}
	An almost $\iota$-complex $\C$ is \textit{reduced} if $\p \equiv 0 \mod U$. 
\end{define}
A key lesson in \cite{dai2018infiniterank} is that reduced almost $\iota$-complexes are easier to work with than standard complexes. 
It is not difficult to show that an almost $\iota$-complex can be assumed to be reduced without loss of generality. This relies on the classification of chain complexes over a PID, and mirrors the classification of modules over a PID. The interested reader may consult \cite{hendricks2016connected} or \cite{dai2018infiniterank}. It is proven in \cite{dai2018infiniterank} that
\begin{lem}[\cite{dai2018infiniterank}, Theorem 6.2] \label{lem:almost_iota_locally_equivalent_to_reduced}
	Every almost $\iota$-complex is locally equivalent to a reduced almost $\iota$-complex. 
\end{lem}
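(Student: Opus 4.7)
The plan is to first apply the classification of finitely generated free chain complexes over the PID $\F[U]$ to bring $\p$ into reduced form, then to transfer the involution $\biota$ along the resulting chain homotopy equivalence and verify that the almost $\iota$-complex axioms survive mod $U$.

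First, since $\F[U]$ is a PID, the finitely generated free chain complex $(C, \p)$ decomposes up to $\F[U]$-linear chain homotopy equivalence into a direct sum of a free summand $\F[U]\langle x_0 \rangle$ supporting the $U$-tower of $U^{-1} H_*(C)$ together with pairs of generators $(x_i, y_i)$ satisfying $\p y_i = U^{n_i} x_i$ for integers $n_i \geq 1$. Take $C'$ to be this canonical model, so that its differential $\p'$ satisfies $\p' \equiv 0 \mod U$ by construction, and let $f : C \to C'$, $g : C' \to C$ be the grading preserving, $\F[U]$-linear chain homotopy equivalences. By arranging $C'$ as a deformation retract one may take $fg = \id_{C'}$ strictly, while $gf = \id_C + \p H + H \p$ for some $\F[U]$-linear homotopy $H$.

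Second, define the transferred involution $\biota' = f \biota g$. Because $f$ and $g$ are strict chain maps, $\biota' \p' + \p' \biota' = f(\biota \p + \p \biota) g \in \im U$. Writing
\[ (\biota')^2 = f \biota (gf) \biota g = f \biota^2 g + f \biota (\p H + H \p) \biota g, \]
the first summand equals $fg + f(\biota^2 - \id) g \equiv \id_{C'} \mod U$ because $\biota^2 \simeq \id \mod U$ on $C$ and $fg = \id_{C'}$. For the second summand, the relation $\biota \p \equiv \p \biota \mod U$ allows one to slide $\biota$ past $\p$, yielding $\biota(\p H + H \p)\biota \equiv \p(\biota H \biota) + (\biota H \biota)\p \mod U$. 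Conjugating by $f$ and $g$ and applying $f \p = \p' f$ and $\p g = g \p'$ shows this term is $[\p', f \biota H \biota g]$ mod $U$, so it is a mod-$U$ boundary and $(\biota')^2 \simeq \id \mod U$ on $C'$.

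Third, the same maps $f$ and $g$ witness a local equivalence between $(C, \biota)$ and $(C', \biota')$. They are grading preserving and $U$-equivariant by construction, they induce isomorphisms on $U^{-1} H_*$ since they are honest chain homotopy equivalences, and the identity $\biota' f = f \biota g f \equiv f \biota \mod U$ (together with the analogous computation for $g \biota'$) provides the required intertwining up to homotopy mod $U$. The main obstacle is the second step: one must confirm that the correction terms in the null-homotopy $gf \simeq \id_C$ do not spoil $(\biota')^2 \simeq \id \mod U$, which reduces to the observation that any commutator of $\biota$ with a boundary term is itself a boundary mod $U$, an immediate consequence of $[\biota, \p] \in \im U$.
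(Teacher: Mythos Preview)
The paper does not supply its own proof of this lemma; it cites \cite{dai2018infiniterank} and merely remarks that the argument ``relies on the classification of chain complexes over a PID, and mirrors the classification of modules over a PID.'' Your proposal carries out precisely this sketch, and the argument is correct.

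One notational point: in step three you write $\biota' f = f\biota gf \equiv f\biota \bmod U$, but $gf = \id + \p H + H\p$ need not equal $\id$ modulo $U$; what you actually need (and what your own phrase ``intertwining up to homotopy mod $U$'' indicates you mean) is $\biota' f \simeq f\biota \bmod U$. The computation goes through: $\biota' f + f\biota = f\biota(\p H + H\p)$, and using $\biota\p \equiv \p\biota \bmod U$ together with $f\p = \p' f$ this becomes $\p'(f\biota H) + (f\biota H)\p$ modulo $U$, so $f\biota H$ is the required homotopy. The analogous check for $g$ uses $H\biota g$. With that clarification the proof is complete and matches the approach the paper points to.
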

The following properties of reduced almost $\iota$-complexes will be useful in realizing almost $\iota$-complexes as immersed curves.
\begin{lem}[\cite{dai2018infiniterank}, Lemma 3.23] \label{lem:reduced_has_omega_squard_equiv_0_mod_U}
	Every reduced almost $\iota$-complex has that $\omega^2 \equiv 0 \mod U$. 
\end{lem}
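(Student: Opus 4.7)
The plan is to prove this directly from the definitions, using that over $\F = \F_2$ we have $\omega^2 = (1 + \biota)^2 = 1 + \biota^2$ (the cross term vanishes in characteristic 2).

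First I would unpack what $\biota^2 \simeq \id \mod U$ means via the definition of homotopy mod $U$: there exists a grading-increasing $\F[U]$-module homomorphism $H : C \to C$ such that
$$\biota^2 + \id + [H, \p] \in \im U.$$
Then I would observe that $\omega^2 = 1 + \biota^2$, so
$$\omega^2 \equiv [H, \p] \mod U.$$

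Next I would invoke the hypothesis that $\C$ is reduced, i.e.\ $\p \equiv 0 \mod U$ (definition~\ref{def:reduced}). Because $H$ is $\F[U]$-linear, the commutator $[H, \p] = H \p + \p H$ inherits the factor of $U$ from $\p$, so $[H, \p] \in \im U$. Substituting back yields $\omega^2 \in \im U$, i.e.\ $\omega^2 \equiv 0 \mod U$, which is exactly the claim.

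There is no real obstacle here: the result is essentially a one-line chase once the definitions of $\omega$, of reduced, and of homotopy mod $U$ are lined up. The only mild subtlety worth flagging in the write-up is the appeal to characteristic two for the identity $(1+\biota)^2 = 1 + \biota^2$, and the fact that the nullhomotopy $H$ supplied by $\biota^2 \simeq \id \mod U$ is $\F[U]$-linear, so that $[H, \p]$ is genuinely divisible by $U$ rather than merely homotopic to something divisible by $U$.
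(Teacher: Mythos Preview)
Your argument is correct: over $\F=\F_2$ one has $\omega^2 = 1+\biota^2$, the relation $\biota^2 \simeq \id \mod U$ gives $\omega^2 \equiv [H,\p] \mod U$, and reducedness forces $[H,\p]\in\im U$ because $\p$ itself factors through $U$ and $H$ is $\F[U]$-linear. The paper does not supply its own proof of this lemma; it simply cites \cite{dai2018infiniterank}, so there is nothing to compare against beyond noting that your direct unwinding of the definitions is exactly the standard argument.
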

\begin{lem}[\cite{dai2018infiniterank}, Lemma 3.23] \label{lem:reduced_has_w_f_commute_mod_U}
	If $f : \C \to \C'$ is a chain map between two reduced almost $\iota$-complexes, then $f$ is an almost $\iota$-morphism if and only if $[\omega, f] \equiv 0 \mod U$. 
\end{lem}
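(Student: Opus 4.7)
The plan is to unpack both sides of the biconditional directly from the definitions and observe that reducedness forces the chain homotopy appearing in the definition of an almost $\iota$-morphism to be superfluous mod $U$.

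First I would expand $[\omega, f] = \omega' f - f \omega$ using $\omega = 1 + \biota$ and $\omega' = 1 + \biota'$; working over $\F = \F_2$ the identity terms cancel, leaving $[\omega, f] = \biota' f + f \biota$. So the condition $[\omega, f] \equiv 0 \mod U$ is precisely the strict containment $\biota' f + f \biota \in \im U$, with no homotopy allowed.

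The backward direction is immediate: if $\biota' f + f \biota \in \im U$, then choosing $H = 0$ in the definition of homotopy mod $U$ gives $f \biota + \biota' f + [H, \p] \in \im U$, so $f$ is an almost $\iota$-morphism. For the forward direction, assume $f$ is an almost $\iota$-morphism, so some grading-raising $H$ satisfies $f \biota + \biota' f + \p' H + H \p \in \im U$. Since both $\C$ and $\C'$ are reduced, $\p \equiv 0 \mod U$ and $\p' \equiv 0 \mod U$ by definition~\ref{def:reduced}, so the entire term $\p' H + H \p$ already lies in $\im U$. Subtracting it preserves the containment, yielding $f \biota + \biota' f \in \im U$, which is exactly $[\omega, f] \equiv 0 \mod U$.

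There is no real obstacle here; the content of the lemma is the single observation that when the differentials on both sides of a chain map vanish mod $U$, the homotopy term $[H, \p]$ in ``homotopic mod $U$'' is automatically in $\im U$, so the homotopy relation collapses to literal equality mod $U$. This is why reducedness is the right hypothesis for translating the almost $\iota$-morphism condition into an equality involving $\omega$, a translation that will later let one work with $\omega$ as if it were an honest algebraic operator on reduced complexes.
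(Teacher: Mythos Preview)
Your argument is correct. Note, however, that the paper does not supply its own proof of this lemma; it merely cites \cite{dai2018infiniterank}. Your unpacking of $[\omega,f]=\biota' f + f\biota$ over $\F_2$ and the observation that reducedness forces $\p' H + H\p \in \im U$ (so the homotopy term is vacuous mod $U$) is exactly the expected argument and matches what one finds in the cited reference.
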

Section~\ref{subsection_almost_iota_complexes_as_immersed_curves} will describe how Lemma~\ref{lem:reduced_has_omega_squard_equiv_0_mod_U} can be used to realize almost $\iota$-complexes as precurves in full detail. In sections~\ref{subsection_primitive_representatives} and \ref{subsection_classification}, the local equivalence class of $\C$ will be extracted from its corresponding precurve. For that, the following splitting lemmas will be needed:

\begin{define}
	An almost $\iota$-complex $\C = (C, \biota)$ splits if $C$ can be written as $C' \oplus B$ with $\biota C' \subset C', \biota B \subset B, \p C' \subset C',$ and $\p B \subset B$, where without loss of generality, the generator of the infinite tower in $H_*(C)$ lies in $C'$. Then it is said that $\C = \C' \oplus \mc{B}$, where $\C' = (C', \p|_{C'}, \biota|_{C'})$ and $\mc{B} = (B, \p|_{B}, \biota|_{B})$. 
\end{define}

\begin{lem}
	If an $\iota$-complex $\C$ splits into $\C' \oplus \mc{B}$, then $\C'$ is an almost $\iota$-complex.
\end{lem}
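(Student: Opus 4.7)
The plan is to verify each clause of Definition \ref{def_almost_iota_complex} in turn for the triple $(C', \p|_{C'}, \biota|_{C'})$, reading off most of what is needed directly from the splitting hypothesis and obtaining the remaining two properties by restricting (and, where necessary, projecting) the corresponding data on $\C$.

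First I would note that since the splitting $C = C' \oplus B$ is meant as a decomposition of $\F[U]$-modules respected by $\p$, both summands are free, finitely generated and $\Z$-graded over $\F[U]$, so the underlying chain complex hypothesis of Definition \ref{def_almost_iota_complex} for $\C'$ is immediate. The grading and $U$-equivariance of $\biota|_{C'}$ are inherited from $\biota$. For the homology condition, I would use that $U^{-1}H_*(C) \cong U^{-1}H_*(C') \oplus U^{-1}H_*(B)$ and that by hypothesis the generator of the infinite $U$-tower of $U^{-1}H_*(C) \cong \F[U,U^{-1}]$ lies in $C'$; since $\F[U,U^{-1}]$ has one-dimensional graded pieces, $U^{-1}H_*(B)$ must vanish and $U^{-1}H_*(C') \cong \F[U,U^{-1}]$, supported in even gradings.

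The two interesting points are the $U$-commutation $\biota\p + \p\biota \in \im U$ and the almost-involutivity $\biota^2 \simeq \id \mod U$. For $x \in C'$, the element $(\biota\p + \p\biota)(x)$ lies in $C'$ because each of $\biota$ and $\p$ preserves $C'$; writing it as $U z$ with $z = z' + z_B \in C' \oplus B$, the summand $U z_B$ lies in $B$ (since $B$ is an $\F[U]$-submodule), but it also lies in $C'$, hence vanishes, so $(\biota\p + \p\biota)(x) = U z'$ with $z' \in C'$. For the almost-involutivity, let $H : C \to C$ be a mod-$U$ homotopy witnessing $\biota^2 \simeq \id$, and let $\pi : C \to C'$ be the projection along $B$. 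I would set $H' := \pi \circ H|_{C'}$ and check directly that for $x \in C'$,
\[
[H', \p|_{C'}](x) = \pi H \p(x) + \p (\pi H(x)),
\]
using $\p(C') \subset C'$ and $\p(B) \subset B$ so that $\p$ commutes with $\pi$. Taking the $C'$-component of $\biota^2(x) + x + [H,\p](x) \in U\cdot C$ then gives $\biota^2(x) + x + [H',\p|_{C'}](x) \in U\cdot C'$ by the same $B$ is $U$-invariant argument as above.

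No single step here is a serious obstacle; the one place to be careful is ensuring that the mod-$U$ relations genuinely descend to $C'$, which is where the $\F[U]$-module structure of the splitting (and hence the $U$-invariance of $B$) is essential. Everything else is bookkeeping on the restrictions of $\biota$, $\p$, and the homotopy $H$.
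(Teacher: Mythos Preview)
Your proposal is correct and follows essentially the same approach as the paper: both arguments restrict the mod-$U$ homotopy $H$ witnessing $\biota^2 \simeq \id$ to $C'$ via the projection $\pi$ and inclusion $i$, and use that $\pi$ and $i$ commute with $\p$ to conclude $\pi H i$ (equivalently your $H' = \pi \circ H|_{C'}$) is the required homotopy. The paper simply declares the chain-complex, single-tower, and $[\biota,\p] \in \im U$ conditions ``evident,'' whereas you spell them out carefully using the $\F[U]$-module splitting; this extra care is warranted but does not constitute a different route.
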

\begin{proof}
	It is evident that $\C'$ is an $\F[U]$ chain complex with a single $U$-tower in homology, with $[\biota|_{C'}, \p|_{C'}] \in \im U$. Now it must only be shown that $\biota|_{C'}^2 \simeq \id \mod U$. Let $H$ be the $\F[U]$-module homomorphism witnessing $\biota^2 \simeq \id \mod U$, and $i : C' \to C$ and $\pi: C \to C'$ be the natural inclusion and projection maps Then 
	$$ \id_C + \biota^2 + [H, \p] \in \im U.$$
	Pre-composing with $i$ and post-composing with $\pi$ gives
	$$ \pi \id_C i + \pi \biota^2 i + \pi [H, \p] i \in \im U.$$
	It is true that $\pi \id_C i = \id_{C'}$, $\pi \biota^2 i = (\pi \biota i)^2$ and $\pi \biota i = \biota|_{C'}$, and
	$$ \pi [H, \p] i = \pi H \p i + \pi \p H i = \pi H i \p + \p \pi H i = [\pi H i, \p|_{C'}]$$
	as $\pi$ and $i$ commute with $\p$. Thus
	$$ \id_{C'} + (\biota|_{C'})^2 + [\pi H i, \p|_{C'}] \in \im U$$
	and $\C' = (C' ,\p|_{C'}, \biota|_{C'})$ is an almost $\iota$-complex.  
\end{proof}

\begin{lem} \label{lem:deleting_summands_almost_iota}
	If an $\iota$-complex $\C$ splits into $\C' \oplus \mc{B}$, then $\C$ is locally equivalent to $\C'$. 
\end{lem}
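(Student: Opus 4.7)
The plan is to use the natural inclusion $i : C' \to C$ and projection $\pi : C \to C'$ as the candidate pair of local equivalence maps. First I would check that these are grading-preserving $U$-equivariant chain maps, which is immediate from the hypothesis that the splitting $C = C' \oplus B$ is compatible with $\p$ and the $\F[U]$-action. Second, I would verify that they commute with the involution modulo $U$. In fact, since $\biota C' \subset C'$ and $\biota B \subset B$, the involution splits as a block-diagonal sum $\biota|_{C'} \oplus \biota|_B$, so $\biota \circ i = i \circ \biota|_{C'}$ and $\biota|_{C'} \circ \pi = \pi \circ \biota$ hold on the nose. In particular they hold mod $U$, so both maps are almost $\iota$-morphisms.

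The remaining condition to verify is that $i$ and $\pi$ induce isomorphisms on $U^{-1} H_*$. By hypothesis $\C$ has a single $U$-tower in $H_*(C)$ and a generator of this tower lies in $C'$. Because $\p B \subset B$ and $\p C' \subset C'$, one has $H_*(C) = H_*(C') \oplus H_*(B)$ as $\F[U]$-modules. Since all non-torsion homology in $H_*(C)$ is accounted for by the single tower living in $C'$, the complementary piece $H_*(B)$ must be $U$-torsion, hence $U^{-1} H_*(B) = 0$. Therefore localization gives $U^{-1} H_*(C) = U^{-1} H_*(C')$, and under this identification $i_*$ and $\pi_*$ are mutually inverse isomorphisms.

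There is no real obstacle here beyond bookkeeping: the only subtlety is invoking the assumption that the tower generator lies in $C'$ in order to conclude $\mathcal{B}$ is $U$-torsion. Once this is pointed out, the pair $(i, \pi)$ satisfies all conditions of the definition of local equivalence, and the lemma follows.
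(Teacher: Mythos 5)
Your proof is correct and follows the same route as the paper: both use the natural inclusion and projection as the local equivalence maps, observe that they commute with $\biota$ on the nose because the splitting is $\biota$-invariant, and use the hypothesis that the tower generator lies in $C'$ to get the isomorphism on $U^{-1}H_*$. Your write-up just spells out the localization step ($H_*(B)$ being $U$-torsion) slightly more explicitly than the paper does.
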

\begin{proof}
	Let $i: C' \to C$ and $\pi : C \to C'$ be the natural inclusion and projection. It will be shown that $i$ and $\pi$ give a local equivalence. Clearly $i$ and $\pi$ are grading preserving chain maps. Since the generator of $H_*(C)$ lies in $C'$ and $\p C' \subset C'$, $i$ and $\pi$ give isomorphims on homology after localization by $U$. It only remains to show that $i$ and $\pi$ commute with $\biota$ up to chain homotopy. For $i$, $i \circ \biota|_{C'} = \biota \circ i$ since $\biota C' \subset C'$. Similarly for $\pi$, 
	$\pi \circ \biota = \biota|_{C'} \circ \pi$
	since $\biota C' \subset C'$. 
\end{proof}

\subsection{Precurves} \label{subsection:precurves}

Precurves provide a way of describing chain complexes geometrically as decorated immersed curves on a given surface, up to homotopy. The general construction is described in full detail in \cite{kotelskiy2019immersed} and \cite{Zibrowius_2020}. An \textit{arc system} $(\Sigma, \phi, A)$ is an oriented Morse cobordism $(\Sigma, \phi: \Sigma \to [0,1])$ of closed 1-manifolds and a set $A$ of 1 dimensional ascending manifolds with boundary in $\Sigma_1 = \phi^{-1}(1)$, with the requirement that
\begin{enumerate}[$\bullet$]
	\item
		The 1 dimensional ascending manifolds in $A$ are pairwise, disjoint and oriented, and
	\item
		$\Sigma \setminus \bigcup_{a \in A} N(a)$ consists of pairwise disjoint annuli such that each annulus bounds exactly one component of $\Sigma_0 = \phi^{-1}(0)$. Here $N(a)$ is a small tubular neighborhood of $a$. 
\end{enumerate}
The disjoint annuli in $\Sigma \setminus \bigcup_{a \in A} N(a)$ are called \textit{faces} and the set of faces is denoted $F(\Sigma, A)$. In the language of \cite{kotelskiy2019immersed} and \cite{Zibrowius_2020}, $\Sigma_0$ is the \textit{inner boundary} and $\Sigma_1$ is the \textit{outer boundary}. Let $s_2(a)$ and $s_1(a)$ be left and right sided boundaries of $N(a) - \p \Sigma$ respectively, according to the chosen orientation of $a$. An arc system will often be written as $(\Sigma, A)$ for brevity without the explicit Morse function $\phi$.

\begin{figure}[ht!]
	\begin{subfigure}{0.45 \textwidth}
		\def\svgwidth{\textwidth}
\begingroup%
  \makeatletter%
  \providecommand\color[2][]{%
    \errmessage{(Inkscape) Color is used for the text in Inkscape, but the package 'color.sty' is not loaded}%
    \renewcommand\color[2][]{}%
  }%
  \providecommand\transparent[1]{%
    \errmessage{(Inkscape) Transparency is used (non-zero) for the text in Inkscape, but the package 'transparent.sty' is not loaded}%
    \renewcommand\transparent[1]{}%
  }%
  \providecommand\rotatebox[2]{#2}%
  \newcommand*\fsize{\dimexpr\f@size pt\relax}%
  \newcommand*\lineheight[1]{\fontsize{\fsize}{#1\fsize}\selectfont}%
  \ifx\svgwidth\undefined%
    \setlength{\unitlength}{511.07099669bp}%
    \ifx\svgscale\undefined%
      \relax%
    \else%
      \setlength{\unitlength}{\unitlength * \real{\svgscale}}%
    \fi%
  \else%
    \setlength{\unitlength}{\svgwidth}%
  \fi%
  \global\let\svgwidth\undefined%
  \global\let\svgscale\undefined%
  \makeatother%
  \begin{picture}(1,0.5538651)%
    \lineheight{1}%
    \setlength\tabcolsep{0pt}%
    \put(0,0){\includegraphics[width=\unitlength,page=1]{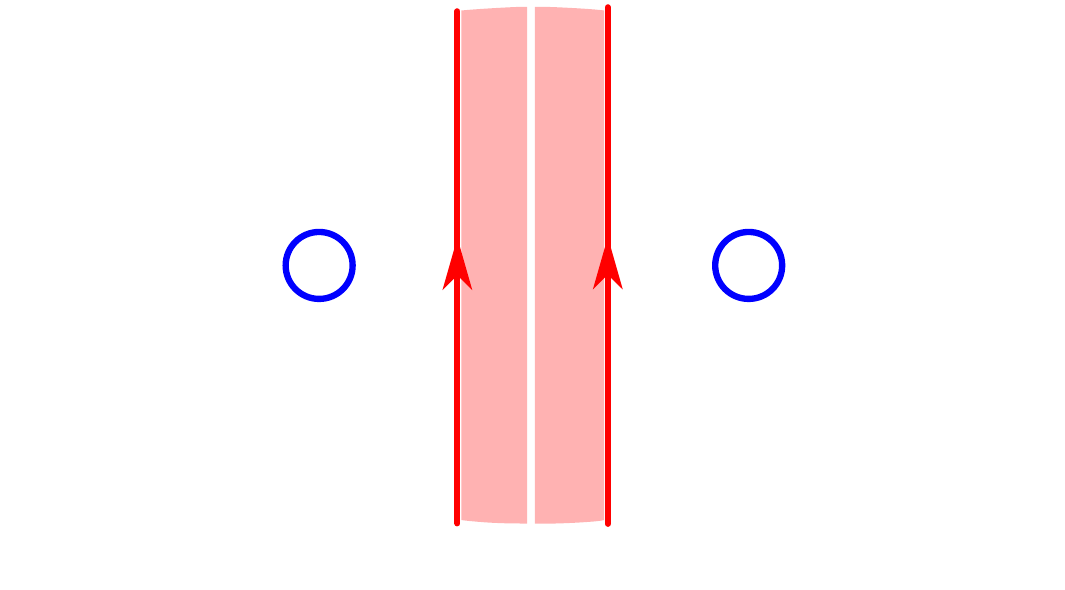}}%
    \put(0.47332727,0.00413882){\color[rgb]{0,0,0}\makebox(0,0)[lt]{\lineheight{1.25}\smash{\begin{tabular}[t]{l}$a$\end{tabular}}}}%
    \put(0.59703836,0.41122876){\color[rgb]{0,0,0}\makebox(0,0)[lt]{\lineheight{1.25}\smash{\begin{tabular}[t]{l}$s_1(a)$\end{tabular}}}}%
    \put(0.24794562,0.41193836){\color[rgb]{0,0,0}\makebox(0,0)[lt]{\lineheight{1.25}\smash{\begin{tabular}[t]{l}$s_2(a)$\end{tabular}}}}%
    \put(0,0){\includegraphics[width=\unitlength,page=2]{twice_punctured_arc_system.pdf}}%
    \put(0.9545532,0.29469959){\color[rgb]{0,0,0}\makebox(0,0)[lt]{\lineheight{1.25}\smash{\begin{tabular}[t]{l}$U$\end{tabular}}}}%
    \put(-0.00177706,0.29469959){\color[rgb]{0,0,0}\makebox(0,0)[lt]{\lineheight{1.25}\smash{\begin{tabular}[t]{l}$Q$\end{tabular}}}}%
  \end{picture}%
\endgroup%

	\end{subfigure}
	\hfill
	\begin{subfigure}{0.45 \textwidth}
		\def\svgwidth{\textwidth}
\begingroup%
  \makeatletter%
  \providecommand\color[2][]{%
    \errmessage{(Inkscape) Color is used for the text in Inkscape, but the package 'color.sty' is not loaded}%
    \renewcommand\color[2][]{}%
  }%
  \providecommand\transparent[1]{%
    \errmessage{(Inkscape) Transparency is used (non-zero) for the text in Inkscape, but the package 'transparent.sty' is not loaded}%
    \renewcommand\transparent[1]{}%
  }%
  \providecommand\rotatebox[2]{#2}%
  \newcommand*\fsize{\dimexpr\f@size pt\relax}%
  \newcommand*\lineheight[1]{\fontsize{\fsize}{#1\fsize}\selectfont}%
  \ifx\svgwidth\undefined%
    \setlength{\unitlength}{495.53531497bp}%
    \ifx\svgscale\undefined%
      \relax%
    \else%
      \setlength{\unitlength}{\unitlength * \real{\svgscale}}%
    \fi%
  \else%
    \setlength{\unitlength}{\svgwidth}%
  \fi%
  \global\let\svgwidth\undefined%
  \global\let\svgscale\undefined%
  \makeatother%
  \begin{picture}(1,0.77694981)%
    \lineheight{1}%
    \setlength\tabcolsep{0pt}%
    \put(0,0){\includegraphics[width=\unitlength,page=1]{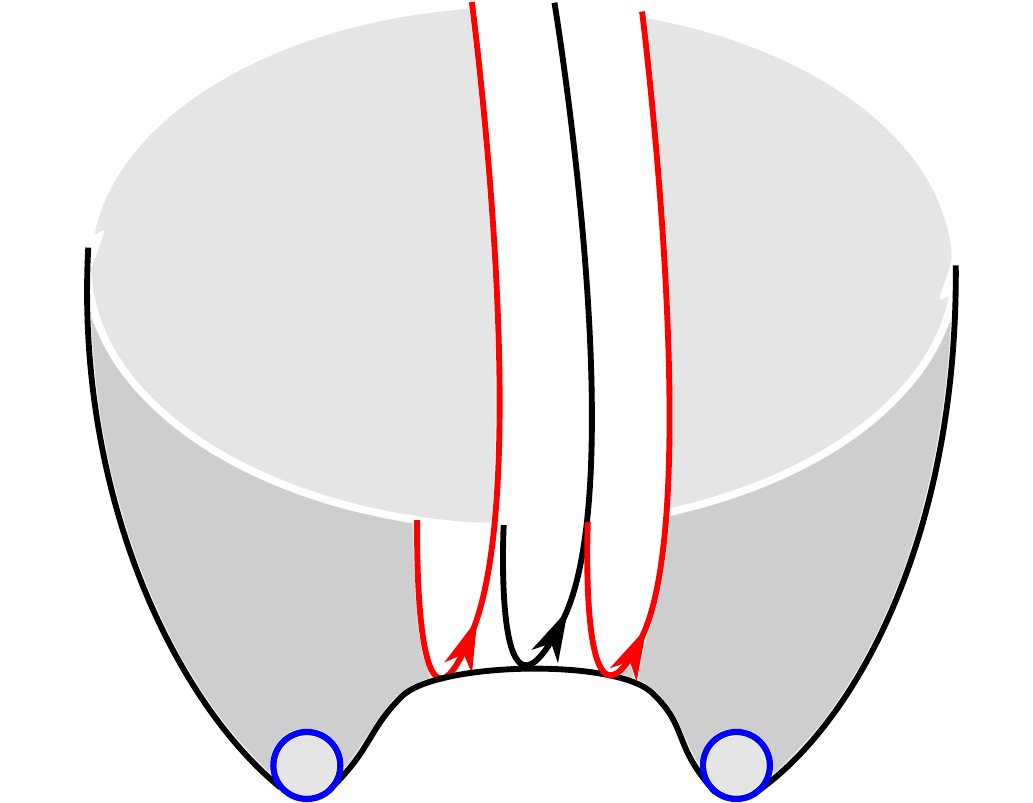}}%
    \put(0.48168019,0.07593475){\color[rgb]{0,0,0}\makebox(0,0)[lt]{\lineheight{1.25}\smash{\begin{tabular}[t]{l}$a$\end{tabular}}}}%
    \put(0.65943226,0.41254412){\color[rgb]{0,0,0}\makebox(0,0)[lt]{\lineheight{1.25}\smash{\begin{tabular}[t]{l}$s_1(a)$\end{tabular}}}}%
    \put(0.33615154,0.41111377){\color[rgb]{0,0,0}\makebox(0,0)[lt]{\lineheight{1.25}\smash{\begin{tabular}[t]{l}$s_2(a)$\end{tabular}}}}%
    \put(0,0){\includegraphics[width=\unitlength,page=2]{twice_punctured_disk_cobordism.pdf}}%
    \put(0.95312838,0.50857802){\color[rgb]{0,0,0}\makebox(0,0)[lt]{\lineheight{1.25}\smash{\begin{tabular}[t]{l}$U$\end{tabular}}}}%
    \put(-0.00183277,0.52155089){\color[rgb]{0,0,0}\makebox(0,0)[lt]{\lineheight{1.25}\smash{\begin{tabular}[t]{l}$Q$\end{tabular}}}}%
    \put(0,0){\includegraphics[width=\unitlength,page=3]{twice_punctured_disk_cobordism.pdf}}%
  \end{picture}%
\endgroup%

	\end{subfigure}
	\caption{An arc system on the twice punctured disk, shown from above and at an angle.}
	\label{fig:twice_punctured_disk_arc_system}
\end{figure}

A picture is worth a thousand words, of course. See figure~\ref{fig:twice_punctured_disk_arc_system} for an example of an arc system on the twice punctured disk. In this figure, the outer boundary $\Sigma_1$ is shown in green and the inner boundary $\Sigma_0$ is shown in blue.

There is an associated quiver algebra $Q(\Sigma, A)$ for any arc system $(\Sigma, A)$. $Q(\Sigma, A)$ is generated by the graph with vertices being the arcs $a \in A$ and the directed edges being the oriented components of $\Sigma_1 \setminus \bigcup_{a \in A} a$ which connect arc endpoints. See figure~\ref{fig:twice_punctured_disk_arc_algebra} for an example.

\begin{figure}[ht!]
	\def\svgwidth{0.4\textwidth}
\begingroup%
  \makeatletter%
  \providecommand\color[2][]{%
    \errmessage{(Inkscape) Color is used for the text in Inkscape, but the package 'color.sty' is not loaded}%
    \renewcommand\color[2][]{}%
  }%
  \providecommand\transparent[1]{%
    \errmessage{(Inkscape) Transparency is used (non-zero) for the text in Inkscape, but the package 'transparent.sty' is not loaded}%
    \renewcommand\transparent[1]{}%
  }%
  \providecommand\rotatebox[2]{#2}%
  \newcommand*\fsize{\dimexpr\f@size pt\relax}%
  \newcommand*\lineheight[1]{\fontsize{\fsize}{#1\fsize}\selectfont}%
  \ifx\svgwidth\undefined%
    \setlength{\unitlength}{420.26328283bp}%
    \ifx\svgscale\undefined%
      \relax%
    \else%
      \setlength{\unitlength}{\unitlength * \real{\svgscale}}%
    \fi%
  \else%
    \setlength{\unitlength}{\svgwidth}%
  \fi%
  \global\let\svgwidth\undefined%
  \global\let\svgscale\undefined%
  \makeatother%
  \begin{picture}(1,0.6267405)%
    \lineheight{1}%
    \setlength\tabcolsep{0pt}%
    \put(0,0){\includegraphics[width=\unitlength,page=1]{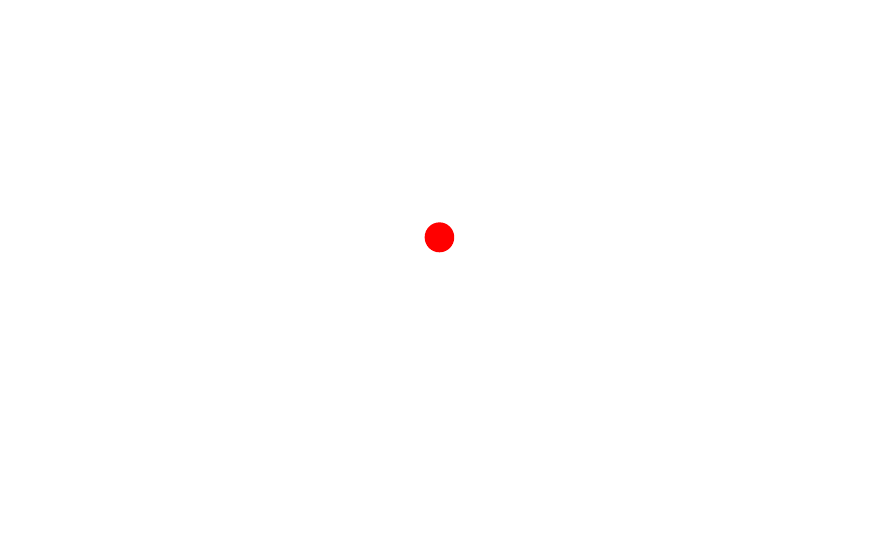}}%
    \put(0.44550598,0.00438019){\color[rgb]{0,0,0}\makebox(0,0)[lt]{\lineheight{1.25}\smash{\begin{tabular}[t]{l}$\iota_a$\end{tabular}}}}%
    \put(0,0){\includegraphics[width=\unitlength,page=2]{twice_punctured_arc_algebra.pdf}}%
    \put(0.15888721,0.00456606){\color[rgb]{0,0,0}\makebox(0,0)[lt]{\lineheight{1.25}\smash{\begin{tabular}[t]{l}$Q$\end{tabular}}}}%
    \put(0.76564975,0.00438019){\color[rgb]{0,0,0}\makebox(0,0)[lt]{\lineheight{1.25}\smash{\begin{tabular}[t]{l}$U$\end{tabular}}}}%
  \end{picture}%
\endgroup%

	\caption{The quiver algebra corresponding to the arc system shown in figure~\ref{fig:twice_punctured_disk_arc_system}.}
	\label{fig:twice_punctured_disk_arc_algebra}
\end{figure}

The algebra corresponding to an arc system is the \textit{arc algebra} 
$$\mc{A}(\Sigma, A) := Q(\Sigma, A)/\mathrm{ArcRelations}$$ 
where ArcRelations is the relation set
$$\mathrm{ArcRelations}: = \{ \rho_2 \rho_1 = \rho_3 \rho_4 = 0 : a \in A \}$$
and the $\rho_i$ are defined according to the local relationship shown in figure~\ref{fig:arc_algebra_quotient}. 

\begin{figure}[ht!]
	\def\svgwidth{0.4\textwidth}
\begingroup%
  \makeatletter%
  \providecommand\color[2][]{%
    \errmessage{(Inkscape) Color is used for the text in Inkscape, but the package 'color.sty' is not loaded}%
    \renewcommand\color[2][]{}%
  }%
  \providecommand\transparent[1]{%
    \errmessage{(Inkscape) Transparency is used (non-zero) for the text in Inkscape, but the package 'transparent.sty' is not loaded}%
    \renewcommand\transparent[1]{}%
  }%
  \providecommand\rotatebox[2]{#2}%
  \newcommand*\fsize{\dimexpr\f@size pt\relax}%
  \newcommand*\lineheight[1]{\fontsize{\fsize}{#1\fsize}\selectfont}%
  \ifx\svgwidth\undefined%
    \setlength{\unitlength}{408.73544393bp}%
    \ifx\svgscale\undefined%
      \relax%
    \else%
      \setlength{\unitlength}{\unitlength * \real{\svgscale}}%
    \fi%
  \else%
    \setlength{\unitlength}{\svgwidth}%
  \fi%
  \global\let\svgwidth\undefined%
  \global\let\svgscale\undefined%
  \makeatother%
  \begin{picture}(1,0.35155949)%
    \lineheight{1}%
    \setlength\tabcolsep{0pt}%
    \put(0,0){\includegraphics[width=\unitlength,page=1]{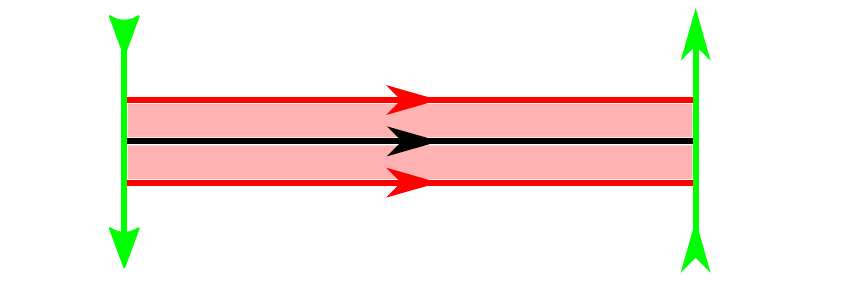}}%
    \put(0.42120934,0.31584099){\makebox(0,0)[lt]{\lineheight{1.25}\smash{\begin{tabular}[t]{l}$s_2(a)$\end{tabular}}}}%
    \put(0.41784542,0.03409827){\makebox(0,0)[lt]{\lineheight{1.25}\smash{\begin{tabular}[t]{l}$s_1(a)$\end{tabular}}}}%
    \put(-0.00222198,0.32896696){\makebox(0,0)[lt]{\lineheight{1.25}\smash{\begin{tabular}[t]{l}$\rho_1$\end{tabular}}}}%
    \put(0.8649609,0.32445259){\makebox(0,0)[lt]{\lineheight{1.25}\smash{\begin{tabular}[t]{l}$\rho_2$\end{tabular}}}}%
    \put(-0.00138744,0.00517507){\makebox(0,0)[lt]{\lineheight{1.25}\smash{\begin{tabular}[t]{l}$\rho_3$\end{tabular}}}}%
    \put(0.86365008,0.00517538){\makebox(0,0)[lt]{\lineheight{1.25}\smash{\begin{tabular}[t]{l}$\rho_4$\end{tabular}}}}%
    \put(0.07772842,0.1646103){\makebox(0,0)[lt]{\lineheight{1.25}\smash{\begin{tabular}[t]{l}$a$\end{tabular}}}}%
  \end{picture}%
\endgroup%

	\caption{To obtain $\A(\Sigma, A)$, $Q(\Sigma, A)$ is quotiented by $\rho_2 \rho_4 = \rho_3 \rho_1 = 0$ for each $a \in A$.}
	\label{fig:arc_algebra_quotient}
\end{figure}

The arc algebra corresponding to the arc system in figure~\ref{fig:twice_punctured_disk_arc_system} is $\mc{R}$. This is the arc system on which all the proofs in sections~\ref{section:classifying_almost_iota_complexes} and \ref{section:homomorphisms} will rely. By the end of this section, the reader will have the tools to describe a chain complex over $\mc{R}$ as an immersed curve on the twice punctured disk. For a demonstration, see example~\ref{example:simple_precurve_simplification}.

The key object in the translation of chain complexes to immersed curves is the precurve. Precurves are complicated objects, and their explanation will require some algebraic preliminaries. First it is necessary to define the linear extension $\Mat(\C)$ of a differential graded category $\C$. Compare the following to \cite{Zibrowius_2020} and \cite{barnatan}. In this presentation, we will only be working over rings with characteristic 2.
\begin{define}[\cite{Zibrowius_2020}, Definition 1.4; \cite{barnatan}, Definition 6.1]
	Given a differential graded category $\C$, define the linear extension of $\C$, $\Mat(\C)$ by setting
	$$\Ob = \{ \bigoplus_{k=0}^n x_k[i_k] : x_k \in \Ob(\C) \}$$
	where $x_k[i_k]$ is $x_k$ with grading shifted by $i_k$ and
	$$\Mor_i( \bigoplus_{k=0}^m x_k[i_k], \bigoplus_{l=0}^n x_l[i_l]) = \bigoplus_{(k,l) \in [0,m]\times[0,n]} \Mor_{i+ i_l - i_k}(x_k, x_l)$$
	where composition is given by matrix multiplication and $\p$ is defined in the natural fashion.
\end{define}
This allows for a definition of chain complexes over $\C$. 
\begin{define}[\cite{Zibrowius_2020}, Definition 1.5]
	Given a differential category $\C$, the category of complexes over $\C$, $\Cx(\C)$, is defined as follows:
	$$\Ob \Cx(\C) = \{(X, d) : X \in \Ob(\C), d \in \Mor(X, X; 1) , d^2 + \p(d) = 0 \}$$
	and
	$$\Mor_{\Cx(\C)}((X,d), (X', d')) = \Mor_{\C}(X, X').$$
	Define endomorphisms $D$ on these morphisms by
	$$D(f) = d' \circ f + f \circ d + \p f.$$
\end{define}
This provides a new, albeit more convoluted, way in which to understand chain complexes over a differential graded category. Before continuing, note $\A(\Sigma, A)$ can be made into a differential category, with 
\begin{enumerate}[$\bullet$]
	\item 
		the objects $\Ob(\A(\Sigma, A))$ being the idempotents $\mc{I} =  \{ \iota_a : a \in A \}$ in the arc algebra,
	\item
		the morphisms $\Mor_{\A(\Sigma, A)}(\iota_a, \iota_b)$ being $\iota_b . \A(\Sigma, A) . \iota_a$, and
	\item
		the differential being $\p = 0$. 
\end{enumerate}
Note that the morphisms were never explicitly given a grading. This will be discussed later in section~\ref{section:homomorphisms}. There are several ways to assign a valid grading, and the grading given in \cite{kotelskiy2020mnemonic} and \cite{Zibrowius_2020} will not be used here. Regardless, $\Cx(\Mat(\A(\Sigma, A)))$ is then the category of finitely and freely generated chain complexes over $\A(\Sigma, A)$, or type-$D$ structures over $\A(\Sigma, A)$. 

Consider for a moment the difficulty in simplifying chain complexes over $\mc{R}$ to some canonical form. The difficulty is in the fact that changing basis effects both the $U$-arrows and the $Q$-arrows, and it is challenging to treat the $U$-arrows and $Q$-arrows separately. See \cite{dai2018infiniterank} for another way to resolve this issue. The advantage of this complicated categorical construction is that it will provide an equivalent category in which the simplification of the $U$-arrows and $Q$-arrows can be separated. This equivalent category is the category of precurves. 

Before separating the category of chain complexes $\Cx(\Mat(\mc{R}))$ into $U$ and $Q$ components, the algebra $\mc{R}$ must first be separated. There is a general method for this over any arc system which takes the algebra $\A(\Sigma, A)$ to the expanded algebra $\bar{\A}(\Sigma, A)$. 
	
For each face $f \in F(\Sigma, A)$, there is an associated cyclic directed graph where the vertices are arcs $a \in \p f$, and the directed edges are the oriented boundary components of $\Sigma - \bigcup_{a \in A} a$ connecting the arcs. Let $\A_f$ be the path algebra of this cyclic graph. Let the \textit{length} of an element of $\A_f$ be the length of the path in the cyclic graph corresponding to that element.
Then, the expanded algebra is constructed by setting
$$\bar{\A}(\Sigma, A) := \bigoplus_{f \in F(\Sigma,A)} \A_f.$$ 
Denote $\bar{\mc{I}}$ to be the idempotents in $\bar{\A}(\Sigma, A)$. Note there are twice as many generating idempotents in $\bar{\A}(\Sigma, A)$ as there are in $\A(\Sigma, A)$. It will be important later to remember that $\mc{I}$ embeds into $\bar{\mc{I}}$ by $\iota_a \mapsto \iota_{s_1(a)} \oplus \iota_{s_2(a)}$. 
It will also be important to define the subalgebras with elements of length greater than zero in $\bar{\A}(\Sigma, A)$:
$$\bar{\A}^+(\Sigma, A) := \A / \{ \iota = 0 : \iota \in \bar{\mc{I}} \}.$$
Lastly, a special element of $\A_f(\Sigma, A)$ for each $f \in F(\Sigma, A)$, $U_f$, will be defined: $U_f \in \bar{\A}_f(\Sigma, A)$ is the sum of all algebra elements which are represented by a single directed edge around $f$.  

The case of the twice punctured disk is covered in example~\ref{example:extended_arc_algebra_of_twice_punctured_disk}.

\begin{example} \label{example:extended_arc_algebra_of_twice_punctured_disk}
	Consider the arc system from figure~\ref{fig:twice_punctured_disk_arc_system}. It has already been established that the arc algebra $\A(\Sigma, A)$ is $\mc{R}$. Here $\mc{I}$ is the single idempotent $\iota_a$, which for all intents and purposes can be considered to be 1. $\bar{\A}(\Sigma)$ is $\F[U] \oplus \F[Q]$ and $\bar{\mc{I}} = \{ \iota_{s_1(a)}, \iota_{s_2(a)} \}$. Here $\iota_{s_1(a)}$ corresponds to $0 \oplus 1$ and $\iota_{s_2(a)}$ corresponds to $1 \oplus 0$,  if it is taken that the first summand in $\bar{\A}(\Sigma, A)$ is generated by $U$ and the second by $Q$. $\mc{I}$ embeds in $\bar{\mc{I}}$ by $1 \mapsto 1 \oplus 1$. $\bar{\A}^+(\Sigma, A) = \{ \sum_{k=1}^n (\alpha_k U^k + \beta_k Q^k) : \alpha_k, \beta_k \in \F \}$. $U_f = U$ if $f$ contains the $U$-puncture, and $U_f= Q$ if $f$ contains the $Q$-puncture.
\end{example}

The subalgebra $\bar{\A}^+$ is important because it gives rise to an exact sequence:
$$0 \to \bar{\A}^+ \to \bar{\A} \to \bar{\mc{I}} \to 0$$
that splits the morphisms of the linear extensions:
$$0 \to \Mor_{\Mat \bar{\A}^+}(C, C') \to \Mor_{\Mat \bar{\A}}(C, C') \to \Mor_{\Mat \bar{\mc{I}}}(C, C') \to 0$$
thereby allowing any $\varphi \in \Mor_{\Mat \bar{\A}}(C, C')$ to be split into $\varphi^+ + \varphi^\times$ where $\varphi^+ \in \Mor_{\Mat \bar{\A}^+}(C, C')$ and $\varphi^\times \in \Mor_{\Mat \bar{\mc{I}}}(C, C')$. This splitting allows for the construction of an expanded linearization $\Mat_i \bar{\A}$. 

\begin{define}[\cite{Zibrowius_2020}, Definition 4.13]
	Let $\Mat_i \bar{\A}$ be the category defined by 
	\begin{enumerate}[$\bullet$]
		\item
			$\Ob \Mat_i \bar{\A} : = \{(C, (P_a)_{a \in A}) \}$
			where $C$ is a graded right module over $\bar{\mc{I}}$ and 
			$$P_a : C. \iota_{s_1(a)} \to C. \iota_{s_2(a)}$$
			is a grading preserving vector space isomorphism for every arc $a \in A$,
		\item
			$\Mor_{\Mat_i \bar{\A}} ((C, (P_a)), (C', (P'_a))) := $
			$$\{ \varphi \in \Mor_{\Mat \bar{\A}}(C, C') : (\iota_{s_2(a)}. \varphi^{\times} . \iota_{s_2(a)} ) \circ P_a = P_a' \circ (\iota_{s_1(a)}. \varphi^{\times} . \iota_{s_1(a)} ) \forall  a \in A \}$$
			where $\varphi^{\times}$ is the restricition of $\varphi$ to the identity component, as described in \cite{Zibrowius_2020} Definition 4.13, and
		\item
			composition in $\Mat_i \bar{\A}$ is inherited from $\Mat \bar{\A}$. 
	\end{enumerate}
\end{define}

With all this book-keeping in order, it is finally time to define the category of precurves.
\begin{define}[\cite{Zibrowius_2020}, Definition 4.15]
	$\Cx(\Mat_i\bar{\A})$ is the category of \textit{precurves}.
\end{define}

The following results were proven by Zibrowius in \cite{Zibrowius_2020}:

\begin{lem}[\cite{Zibrowius_2020}, Lemma 4.14] \label{lem:mat_a_and_mat_bar_a_are_equivalent}
	$\Mat \A$ and $\Mat_i \bar{\A}$ are equivalent categories.
\end{lem}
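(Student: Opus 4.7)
The plan is to build an explicit functor $F : \Mat \A \to \Mat_i \bar{\A}$ and then show it is fully faithful and essentially surjective. On objects, $F$ sends a generator $\iota_a[i]$ (for $a \in A$ and grading shift $i$) to the pair $(\iota_{s_1(a)}[i] \oplus \iota_{s_2(a)}[i],\ P_a = \id)$, where $P_a$ is the tautological grading-preserving isomorphism between the two summands; $F$ is then extended additively to direct sums. On morphisms, $F$ uses the natural inclusion $\A \hookrightarrow \bar{\A}$ that sends $\iota_a$ to $\iota_{s_1(a)} + \iota_{s_2(a)}$ and sends each non-identity path in $\A$ to the corresponding path in the face subalgebra $\A_f$ in which it lives. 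One checks that $F(\varphi)$ actually lies in $\Mor_{\Mat_i \bar{\A}}$: its identity component $F(\varphi)^{\times}$ is a sum of ``balanced'' terms of the form $c \cdot (\iota_{s_1(a)} + \iota_{s_2(a)})$, which commutes trivially with every $P_a = \id$.

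For essential surjectivity, I would take any $(C, (P_a))$ and use that each $P_a : C . \iota_{s_1(a)} \to C . \iota_{s_2(a)}$ being a grading-preserving isomorphism forces the two summands to have equal graded dimensions. Picking a homogeneous basis of $C . \iota_{s_1(a)}$ and propagating it through $P_a$ to a basis of $C . \iota_{s_2(a)}$ for every arc $a$ identifies $(C, (P_a))$ with $F(X)$ for some $X \in \Mat \A$, and the change-of-basis map is the identity on the idempotent component by construction, so it is a legitimate isomorphism in $\Mat_i \bar{\A}$.

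For full faithfulness, injectivity on morphism spaces is immediate from injectivity of the embedding $\A \hookrightarrow \bar{\A}$. For surjectivity, decompose a given $\varphi \in \Mor_{\Mat_i \bar{\A}}(F(X), F(Y))$ as $\varphi^+ + \varphi^{\times}$ using the splitting recalled in the excerpt. The key observation is that $\A^+ \hookrightarrow \bar{\A}^+$ is already an isomorphism of vector spaces: the ArcRelations $\rho_2 \rho_1 = \rho_3 \rho_4 = 0$ annihilate exactly those composites that would cross between adjacent faces, while paths that remain inside a single face $f$ biject with paths in the cyclic graph underlying $\A_f$. Hence $\varphi^+$ pulls back uniquely. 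The identity component $\varphi^{\times}$, by the $P_a$-intertwining hypothesis specialized to $P_a = \id$, satisfies $\iota_{s_1(a)} . \varphi^{\times} . \iota_{s_1(a)} = \iota_{s_2(a)} . \varphi^{\times} . \iota_{s_2(a)}$ for every $a$, so it is the image of a unique ``balanced'' element of $\bigoplus_{a \in A} \F . \iota_a \subset \A$.

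The main obstacle I anticipate is the bookkeeping linking algebra and geometry: one must verify carefully that the ArcRelations in $\A$ correspond precisely to the inter-face composites that vanish in $\bar{\A}^+$, and that the doubling $\mc{I} \hookrightarrow \bar{\mc{I}}$ of idempotents is exactly compensated by the $P_a$-intertwining condition in the definition of $\Mat_i \bar{\A}$. Both statements are local per arc and per face, so they reduce to direct inspection of the definitions, but getting orientations and source/target conventions consistent across all the pieces is the bulk of the work.
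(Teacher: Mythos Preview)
Your argument is correct and proves the lemma. The paper takes a closely related but more explicit route: rather than showing one functor is fully faithful and essentially surjective, it writes down a pair of functors $\mc{F}:\Mat\A\to\Mat_i\bar\A$ and $\mc{G}:\Mat_i\bar\A\to\Mat\A$ and checks they are mutually inverse. Your $F$ is exactly the paper's $\mc{F}$; your essential-surjectivity step (transport a basis of $C.\iota_{s_1(a)}$ through $P_a$) is precisely how $\mc{G}$ is defined on objects; and your full-faithfulness analysis, splitting $\varphi=\varphi^++\varphi^\times$ and using $\A^+\cong\bar\A^+$ together with the $P_a$-intertwining condition on $\varphi^\times$, is what the paper packages into the explicit formula for $\mc{G}$ on morphisms (equation~\eqref{equation:mcG_on_morphisms}). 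The practical advantage of the paper's formulation is that this explicit formula for $\mc{G}(\varphi)$ is used repeatedly downstream (for instance in Lemma~\ref{lem:rank_of_homology_over_fu_and_fq_from_counting_puncture_connections} and in the proof of Lemma~\ref{lem_primitive_is_standard}), so even after your proof you would still need to extract it.
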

\begin{proof}
	This is witnessed by the equivalences $\mc{F} : \Mat \A \to \Mat_i \bar \A$ and $\mc{G} : \Mat_i \bar \A \to \Mat \A$. The functor $\mc{F}: \Mat \A \to \Mat_i \bar{\A}$ 
	\begin{enumerate}[$\bullet$]
		\item 
		takes $\iota_a \in \Ob(\A) = \mc{I}$ (idempotents) to $\iota_{s_1(a)} \oplus \iota_{s_2(a)}$ in $\Ob(\bar \A) = \bar{\mc{I}}$ (also idempotents),
		\item
		takes morphisms in $\varphi \in \Mor_{\A}(\iota_a, \iota_b)$ to 
		\begin{center}
			\begin{tikzcd}[ampersand replacement=\&]
			\iota_{s_1(a)} \oplus \iota_{s_2(a)} 
			\arrow{rrrrr}{
				\begin{pmatrix}
				\iota_{s_1(b)} . \varphi . \iota_{s_1(a)} & \iota_{s_1(b)} . \varphi . \iota_{s_2(a)} \\ 
				\iota_{s_2(b)} . \varphi . \iota_{s_1(a)} & \iota_{s_2(b)} . \varphi . \iota_{s_2(a)}
				\end{pmatrix}
			} \& \& \& \& \& 
			\iota_{s_1(b)} \oplus \iota_{s_2(b)}
			\end{tikzcd}
		\end{center}
		in $\Mor_{\Mat_i \bar \A} (\iota_{s_1(a)} \oplus \iota_{s_2(a)}, \iota_{s_1(b)} \oplus \iota_{s_2(b)})$
		\item
		and for each $x. \iota_a \in C.\iota_a$, $P_a(x. \iota_{s_1(a)}) = x.\iota_{s_2(a)}$. 
	\end{enumerate}
	
	$\mc{G}: \Mat_i \bar{\A} \to \Mat \A$ is roughly the projection of $C.\iota_{s_2(a)} \oplus C. \iota_{s_1(a)}$ to $C.\iota_a \cong C.\iota_{s_1(a)}$. The morphisms must begin at an element of $C. \iota_{s_1(a)}$ and end at an element $C.\iota_{s_1(b)}$ for any $b$ to make this projection into an equivalence. In full formality, the functor $\mc{G}$ 
	
	\begin{enumerate}[$\bullet$]
		\item 
		takes an object $X = (C, \{P_a\}_{a \in A})$ to $\mc{G}(X) = \bigcup_{a \in A} C.\iota_{s_1(a)}$ where $\mc{G}(X).\iota_a = C.\iota_{s_1(a)}$,
		\item
		takes a morphism $\varphi \in \Mor_{\Mat_i \bar \A}((C, \{P_a\}), (C', \{ P'_a\}))$ to $\mc{G}(\varphi)$ in $\Mat \A$ defined by
		\begin{equation} \label{equation:mcG_on_morphisms}
		\begin{aligned}
		\iota_{b}.\mc{G}(\varphi).\iota_a  & =  (\iota_{s_1(b)}.\varphi^{\times}.\iota_{s_1(a)}) + (\iota_{s_1(b)}.\varphi^+.\iota_{s_1(a)}) + (P'_b)^{-1} \circ (\iota_{s_2(b)}.\varphi^{+}.\iota_{s_1(a)}) \\
& + (\iota_{s_1(b)}.\varphi^{+}.\iota_{s_2(a)})  \circ (P_a) + (P'_b)^{-1} \circ (\iota_{s_2(b)}. \varphi^+ . \iota_{s_2(a)})\circ P_a
		\end{aligned}
		\end{equation}
		for each $a,b \in A$.
	\end{enumerate}

	It is a quick calculation to show that $\mc{F}$ and $\mc{G}$ compose to the identity.
\end{proof}

An immediate corollary of Lemma~\ref{lem:mat_a_and_mat_bar_a_are_equivalent} is that the category of chain complexes over $\A$ is equivalent to the category of precurves.

\begin{cor}[\cite{Zibrowius_2020}, Corollary 4.16] \label{cor:cxmata_and_cxmatibara_are_equivalent}
	$\Cx(\Mat \A)$ are $\Cx(\Mat_i \bar{\A})$ equivalent differential categories. 
\end{cor}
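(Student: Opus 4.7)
The plan is to bootstrap the equivalence $\Cx(\Mat \A) \simeq \Cx(\Mat_i \bar{\A})$ from the underlying categorical equivalence $\mc{F}, \mc{G}$ produced in Lemma~\ref{lem:mat_a_and_mat_bar_a_are_equivalent}. The key point is that $\Cx(-)$ is functorial for differential (graded) functors, so once I show $\mc{F}$ and $\mc{G}$ themselves are differential functors, they automatically extend to equivalences on the chain complex categories.

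First I would construct the extensions. Given $(X, d) \in \Ob \Cx(\Mat \A)$, set $\tilde{\mc{F}}(X, d) := (\mc{F}(X), \mc{F}(d))$, and similarly for $\tilde{\mc{G}}$. For morphisms, which by definition are just morphisms of the underlying objects in $\Mat \A$ respectively $\Mat_i \bar{\A}$, apply $\mc{F}$ and $\mc{G}$ directly. The content to check is that $(\mc{F}(X), \mc{F}(d))$ is again an object, i.e.\ that $\mc{F}(d)$ is a degree one endomorphism satisfying $\mc{F}(d)^2 + \p(\mc{F}(d)) = 0$. Since $\mc{F}$ is a graded functor preserving composition, $\mc{F}(d)^2 = \mc{F}(d^2)$, so this reduces to verifying that $\mc{F}$ commutes with the differential $\p$ on morphisms. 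In the arc algebra setup $\p = 0$ on both sides, so $\p(\mc{F}(d)) = 0 = \mc{F}(\p(d))$ holds trivially, and the condition $\mc{F}(d)^2 = \mc{F}(d^2) = \mc{F}(0) = 0$ is immediate; the same reasoning applies to $\tilde{\mc{G}}$.

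Next I would verify that $\tilde{\mc{F}}$ and $\tilde{\mc{G}}$ respect the differential $D$ on $\Cx$-morphisms. Given $f: (X, d) \to (X', d')$, we have $D(f) = d' \circ f + f \circ d + \p(f)$, and since $\mc{F}$ is an additive graded functor commuting with composition and (trivially) with $\p$, $\mc{F}(D(f)) = \mc{F}(d') \circ \mc{F}(f) + \mc{F}(f) \circ \mc{F}(d) = D(\mc{F}(f))$ in $\Cx(\Mat_i \bar{\A})$. Thus $\tilde{\mc{F}}$ is a differential functor, and likewise for $\tilde{\mc{G}}$, despite the fact that $\mc{G}$ on morphisms is given by the somewhat elaborate formula~\eqref{equation:mcG_on_morphisms}.

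Finally I would appeal to the fact that $\mc{F} \circ \mc{G}$ and $\mc{G} \circ \mc{F}$ are naturally isomorphic to the identity, as established in the proof of Lemma~\ref{lem:mat_a_and_mat_bar_a_are_equivalent}. This passes to $\tilde{\mc{F}} \circ \tilde{\mc{G}} \simeq \id_{\Cx(\Mat_i \bar{\A})}$ and $\tilde{\mc{G}} \circ \tilde{\mc{F}} \simeq \id_{\Cx(\Mat \A)}$ because the natural isomorphisms at the level of objects of $\Mat \A$ (resp.\ $\Mat_i \bar{\A}$) are automatically chain maps of every differential $d$, since they commute with all morphisms. The step I expect to be most delicate is verifying compatibility of $\mc{G}$ with composition and $\p$ in light of formula~\eqref{equation:mcG_on_morphisms} — in particular that the cross-terms involving $P_a$ and $(P'_b)^{-1}$ telescope correctly when one composes two morphisms — but this is precisely the content already implicit in Lemma~\ref{lem:mat_a_and_mat_bar_a_are_equivalent} and requires no new input.
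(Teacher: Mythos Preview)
Your proposal is correct and follows the same approach as the paper: both simply apply the functors $\mc{F}$ and $\mc{G}$ from Lemma~\ref{lem:mat_a_and_mat_bar_a_are_equivalent} and observe that they induce the desired equivalence on $\Cx(-)$. The paper's proof is a one-liner that asserts $\mc{F}$ and $\mc{G}$ are mutually inverse and hence witness the equivalence, whereas you have carefully unpacked why the induced functors on $\Cx$ are well-defined differential functors (using $\p=0$) and why the equivalence persists; your extra care is entirely appropriate but adds nothing beyond what the paper leaves implicit.
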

\begin{proof}
	Consider the functors $\mc{F}$ and $\mc{G}$ from Lemma~\ref{lem:mat_a_and_mat_bar_a_are_equivalent}. Then $\mc{F}$ and $\mc{G}$ are mutually inverse and witness the equivalence of $\Cx(\Mat \A)$ and $\Cx(\Mat_i\bar{\A})$. 
\end{proof}

The benefit of considering chain complexes over $\A(\Sigma, A)$ as precurves is that precurves can be represented geometrically as decorated immersed curves on $\Sigma$ up to homotopy. This geometric description will be described shortly. When considering the geometric presentation of a precurve, it will be easiest to work with \textit{reduced} precurves.
\begin{define}
	A \textit{reduced} precurve $(C, \{ P_a\}, d)$ is a precurve such that $d^+ = d$. 
\end{define}
In other words, a reduced precurve is a complex in which the differential has no idempotents. Zibrowius proves in \cite{Zibrowius_2020} that precurves are reduced up to homotopy.
\begin{prop}[\cite{Zibrowius_2020}, Lemma 4.17]
	All precurves are chain homotopic to a reduced one. 
\end{prop}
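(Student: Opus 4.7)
The plan is to reduce the assertion to a standard cancellation argument in $\Cx(\Mat \A)$ and then transport back via the equivalence of Corollary~\ref{cor:cxmata_and_cxmatibara_are_equivalent}. In $\Cx(\Mat \A)$, the analogue of being reduced is that the differential contains no idempotent (length-zero) summand. Any finitely generated chain complex in $\Cx(\Mat \A)$ is chain homotopy equivalent to one of this form: whenever the differential contains an arrow $\iota \cdot y$ with $\iota \in \mc{I}$ appearing in $\p x$, invoke the standard Gaussian elimination (cancellation) lemma to excise the pair $(x, y)$, adjusting the remaining differential by composing length-two paths through the cancelled generators. Since $C$ is finitely generated, this process terminates after finitely many cancellations, yielding a chain homotopy equivalent complex with idempotent-free differential.

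Now push the resulting complex across $\mc{F} : \Mat \A \to \Mat_i \bar{\A}$ from Lemma~\ref{lem:mat_a_and_mat_bar_a_are_equivalent}. The key claim is that if $\varphi \in \iota_b . \A . \iota_a$ has no $\mc{I}$-component, then each entry $\iota_{s_j(b)} . \varphi . \iota_{s_i(a)}$ of the matrix $\mc{F}(\varphi)$ lies in $\bar{\A}^+$. This is because length-zero terms in $\bar{\A}$ arise only from length-zero terms in $\A$: the embedding $\mc{I} \hookrightarrow \bar{\mc{I}}$ sending $\iota_a \mapsto \iota_{s_1(a)} \oplus \iota_{s_2(a)}$ is the unique source of idempotents in the matrix expansion. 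Therefore $\mc{F}(\varphi)^\times = 0$, so $\mc{F}$ carries a reduced complex in $\Cx(\Mat \A)$ to a reduced precurve. Composing with the homotopy equivalence produced by the cancellation step gives a chain homotopy equivalence between the original precurve $(C, \{P_a\}, d)$ and a reduced precurve, as desired.

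The main obstacle is verifying the behavior of $\mc{F}$ with respect to reducedness cleanly; this amounts to careful bookkeeping of idempotents in the two algebras, but is subtle given that the embedding $\mc{I} \hookrightarrow \bar{\mc{I}}$ doubles the number of idempotents. An alternative direct approach performs Gaussian elimination inside $\Cx(\Mat_i \bar{\A})$ itself: given an identity-valued arrow $x \mapsto y$ in $d^\times$, cancel the pair $(x, y)$ and restrict $\{P_a\}$ along the cancellation map. The hazard there is that the restriction of $P_a$ may fail to be an isomorphism on each side, requiring a compensating arrow-slide (in the spirit of \cite{hanselman2016bordered}) to restore the $P_a$ constraint. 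Routing the argument through $\Cx(\Mat \A)$, where no $P_a$ constraint is present, sidesteps this issue entirely and is the cleaner route to the conclusion.
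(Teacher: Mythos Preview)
The paper does not supply its own proof here; the proposition is stated and attributed to \cite{Zibrowius_2020}. Your proposal furnishes a valid self-contained argument. The strategy of transporting along $\mc{G}$ to $\Cx(\Mat \A)$, performing standard Gaussian elimination there (where no $P_a$ compatibility constraint interferes), and then returning via $\mc{F}$ is sound: the cancellation lemma applies in any $\F$-linear category, termination follows from finite generation, and your key observation---that a positive-length path in $\A$ is sent by $\mc{F}$ to a matrix whose entries lie in $\bar{\A}^+$, so that idempotent-free differentials go to reduced precurves---is correct, since a nontrivial path in $\A$ lives in a single face and hence maps to a nontrivial path in the corresponding $\A_f$. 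The equivalence of Corollary~\ref{cor:cxmata_and_cxmatibara_are_equivalent} is compatible with the differential structure, so it carries the homotopy equivalence produced by cancellation to a homotopy equivalence of precurves, and $\mc{F}\mc{G}(P)\cong P$ closes the loop. Your final paragraph, diagnosing why direct cancellation inside $\Cx(\Mat_i\bar{\A})$ is more delicate (the restricted $P_a$ need not remain an isomorphism without an auxiliary arrow-slide), is accurate and justifies the route you chose.
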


\begin{construction} \label{construction:drawing_reduced_precurves}
	Now to describe the geometric presentation of an arbitrary precurve $(C, \{P_a\}, \p) \in \Cx \Mat_i \bar \A(\Sigma, A)$ on $\Sigma$. The first step is to draw $C$. For each $a$, fix a basis of $C.\iota_{s_1(a)}$ and $C. \iota_{s_2(a)}$,
	$$\{ e_1^{s_1(a)}, \cdots, e_{n_a}^{s_1(a)} \} \qquad \text{ and } \qquad  \{e_1^{s_2(a)}, \cdots, e_{n_a}^{s_2(a)} \}. $$
	Start by marking a point on $s_1(a)$ and $s_2(a)$ for the first generators of $C.\iota_{s_1(a)}$ and $C.\iota_{s_2(a)}$, respectively. Following the orientation of $a$ and the order of each basis, mark a point on $s_1(a)$ and $s_2(a)$.
	
	The next step is to draw the differential. Consider each nonzero component of the differential:
	\begin{center}
		\begin{tikzcd}
		e_i^{s_k(a)} \arrow[r,"p^+"] & e_j^{s_l(a)}.
		\end{tikzcd}
	\end{center}
	since $\p$ is reduced, $p^+ \in \bar{\A}^+$ is a nontrivial path about some face $f \in F(\Sigma, A)$. For each such nonzero component, draw a curve from $e_i^{s_k(a)}$ to $e_j^{s_l(a)}$, following the path $p^+$. If, at the end of this process, there is some generator not connected to a path, connect it to the puncture on the face which it neighbors.
	
	The final step is to draw $\{P_a\}$. Every matrix $P_a \in \mathrm{GL}_n(\F = \Z_2)$ can be written in the form $P_a = P_a^{l_a} \cdots P_a^1$ where each matrix $P_a$ is a transposition $T_{ij}$ (transposing columns $i$ and $j$), or a single row addition $E_{ij}$ (adding row $i$ to row $j$). $P_a$ can then be represented within $N(a)$ in the spirit of \cite{hanselman2016bordered}, in which $T_{ij}$ gives a \textit{crossing} and $E_{ij}$ gives a \textit{crossover arrow}. This method is shown in figure~\ref{fig:p_a_example}. 
	\begin{figure}[ht!]
		\def\svgwidth{0.5\textwidth}
		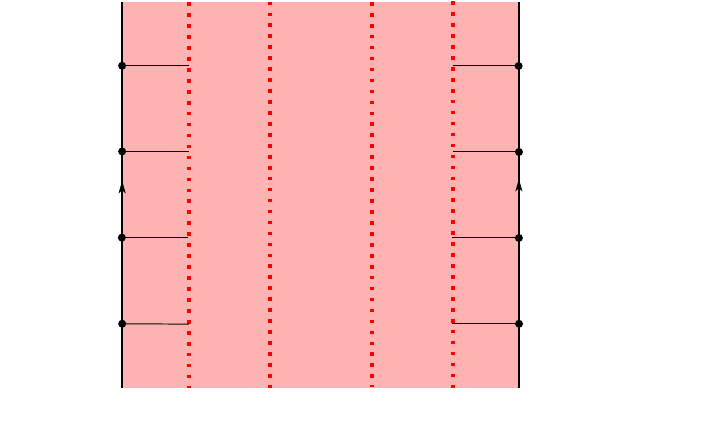
		\caption{Representing $P_a$ in $N(a)$.}
		\label{fig:p_a_example}
	\end{figure}
\end{construction}

There is no longer any need to distinguish between the geometric and algebraic representations of a reduced precurve. 
Note that it is unreasonable to expect a reduced precurve to be an immersed curve in general. At the very least, it is possible there are many components of $\p$ associated with a single generator. Denote a precurve in which each generator is associated with exactly one component as \textit{simply-faced}. The following lemma is required to manipulate an arbitrary immersed curve into a simply-faced curve:

\begin{lem}[Clean-Up Lemma, \cite{Zibrowius_2020}, Lemma 1.24] \label{lem:clean_up}
	Let $(C, d_C)$ be an object in $\Cx(\Mat \C)$ for some differential category $\C$. Then for any morphism $h \in \Mor((C, d_C), (C, d_C)$ for which
	$$h^2, \qquad hD(h), \qquad \text{ and } \qquad D(h) h$$
	are all zero, $(C, d_C)$ is chain isomorphic to $(C, d_C + D(h))$. 
\end{lem}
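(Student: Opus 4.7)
The plan is to write down an explicit chain isomorphism, namely $f := \id_C + h$, and verify the two required properties by direct calculation. The inverse will turn out to be $f$ itself, exploiting the fact that we are working in characteristic $2$ and $h^2 = 0$.

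First I would verify that $f$ is a chain map from $(C,d_C)$ to $(C,d_C + D(h))$, which in the language of $\Cx(\Mat \C)$ means checking
\begin{equation*}
(d_C + D(h)) \circ f + f \circ d_C + \partial f \;=\; 0.
\end{equation*}
Since $\partial \id_C = 0$, we have $\partial f = \partial h$. Expanding and using characteristic $2$, the left-hand side reduces to $D(h) + D(h) + D(h) h = D(h) h$ after substituting the definition $D(h) = d_C h + h d_C + \partial h$. This vanishes by hypothesis. The symmetric computation, now regarding $f = \id_C + h$ as a morphism $(C, d_C + D(h)) \to (C, d_C)$, yields $h D(h)$, which also vanishes by hypothesis.

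Next I would show that $f$ is an isomorphism in $\Mat \C$ by computing
\begin{equation*}
f \circ f \;=\; (\id_C + h)(\id_C + h) \;=\; \id_C + 2h + h^2 \;=\; \id_C,
\end{equation*}
using $h^2 = 0$ and characteristic $2$. Since both $f$ and its inverse (which is $f$ again) are chain maps by the previous step, they constitute a chain isomorphism between $(C, d_C)$ and $(C, d_C + D(h))$.

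There is essentially no obstacle here once one guesses the correct candidate: the three hypotheses $h^2 = 0$, $h D(h) = 0$, and $D(h) h = 0$ are precisely tailored so that $\id_C + h$ is a self-inverse chain isomorphism. The only subtlety worth flagging is the reliance on characteristic $2$, which the paper has already fixed as a standing convention, so no sign bookkeeping is needed.
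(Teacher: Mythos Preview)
Your proof is correct and matches the paper's own approach: the paper also takes $1+h$ as the chain isomorphism in both directions and leaves the verification as a straightforward check (citing \cite{Zibrowius_2020} for details). One small point you leave implicit is that $(C,d_C+D(h))$ is actually an object of $\Cx(\Mat\C)$, i.e.\ that $(d_C+D(h))^2+\partial(d_C+D(h))=0$; this reduces to $D(h)^2=0$, which follows from the Leibniz rule for $D$ applied to the hypothesis $hD(h)=0$.
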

It is straightforward to check that $(C, d_C + D(h))$ is in $\Cx(\Mat(\C))$ and that $(1+h) : (C, d_C) \to (C, d_C + D(h))$ and $(1+h) : (C, d_C + D(h)) \to (C, d_C)$ are chain isomorphisms. For a detailed presentation, see \cite{Zibrowius_2020}. Using the clean-up lemma it is possible to prove the following:

\begin{prop}[\cite{Zibrowius_2020}, Proposition 4.24] \label{prop:reduced_chain_iso_to_simply_faced}
	Each reduced precurve is chain isomorphic to a simply-faced precurve. 
\end{prop}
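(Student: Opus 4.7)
The strategy is to iteratively apply the Clean-Up Lemma (Lemma~\ref{lem:clean_up}) to cancel ``redundant'' differential arrows until every generator is associated with exactly one arrow. The key structural observation is that, since the precurve is reduced, every arrow label lies in $\bar{\A}^+$ and is a path in a unique face algebra $\bar{\A}_f$; moreover, each generator $e$ lies on a side $s_k(a)$ adjacent to a single face $f\in F(\Sigma, A)$, so every arrow incident to $e$ is labeled by a path in the cyclic quiver of $f$ beginning or ending at the vertex corresponding to $e$.

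Suppose a generator $e$ is the source of two arrows $e\xrightarrow{p_1}e_1$ and $e\xrightarrow{p_2}e_2$ in $d$. Because $p_1$ and $p_2$ are paths in the cyclic quiver of $f$ with the same starting vertex, one is a prefix of the other; without loss of generality write $p_2 = qp_1$ for some $q\in\bar{\A}^+$. The plan is to define a degree-zero morphism $h\in\Mor_{\Mat_i\bar{\A}}(C, C)$ whose only nonzero matrix entry is $q$ from $e_1$ to $e_2$, so $h(e_1) = q\cdot e_2$ and $h = 0$ on every other basis vector. Since $q\in\bar{\A}^+$ we have $h^{\times} = 0$, which makes the $\{P_a\}$-compatibility condition in the definition of $\Mor_{\Mat_i\bar{\A}}$ automatic, and $h^2 = 0$ is immediate from $h(e_2) = 0$. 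The remaining hypotheses $hD(h) = 0$ and $D(h)h = 0$ should be checked by direct computation, reducing via $D(h) = dh + hd$ to $hdh$-type terms that vanish using $d^2 = 0$ and the support of $h$ on a single basis vector. Applying the Clean-Up Lemma then replaces $d$ by $d + D(h)$, and the $(e\to e_2)$ matrix entry picks up an extra $p_1 q = p_2$ which cancels the original $p_2$ in characteristic $2$, while the only new contributions are of the form ``prepend $q$ to an arrow out of $e_1$'' or ``append $q$ to an arrow into $e_2$''.

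The symmetric case of two incoming arrows $f_1\xrightarrow{p_1}e$ and $f_2\xrightarrow{p_2}e$ with $p_2 = p_1 q$ is handled by the analogous choice $h(f_2) = q\cdot f_1$; the mixed case of one incoming and one outgoing arrow at a single generator is reduced similarly, using $d^2 = 0$ to force the requisite factorization. To guarantee termination I would introduce the lexicographic complexity measure (number of differential arrows, total path length of labels) and verify that each cancellation strictly decreases it; finiteness of $C$ then forces the procedure to halt. Any generator left with no arrows is attached to the adjacent puncture as prescribed in Construction~\ref{construction:drawing_reduced_precurves}, and composing the individual Clean-Up isomorphisms produces a simply-faced precurve chain-isomorphic to the original.

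The main obstacle I expect is the interaction between $hdh$ and arrows that already connect $e_1$ and $e_2$ in $d$: if $d(e_2)$ has a nonzero $e_1$-coefficient $c_1$, then $hdh(e_1) = qc_1 q\cdot e_2$ is not manifestly zero, threatening the $hD(h) = 0$ hypothesis. Handling this will require either cancelling arrows in a carefully chosen order (for example, in increasing order of label length, so that the relevant $c_1$-arrow is simpler than the one being cancelled) or refining $h$ to absorb the feedback term, with both strategies relying on a filtration by path length in the cyclic quiver $\bar{\A}_f$ to make the induction go through.
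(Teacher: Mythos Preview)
Your approach is essentially the one the paper takes: both iterate the Clean-Up Lemma to strip redundant differential arrows until each generator carries at most one incident arc. The paper packages this into four local ``arc-reduction'' moves S1a, S1b, S2a, S2b (two outgoing arrows of unequal/equal length, and the symmetric incoming cases) with explicit choices of $h$, and defers the termination bookkeeping to \cite{Zibrowius_2020}.

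There is, however, a genuine gap in your argument. You write ``without loss of generality $p_2 = q p_1$ for some $q\in\bar{\A}^+$'' and then use $q\in\bar{\A}^+$ to conclude $h^\times = 0$, so that the $\{P_a\}$-compatibility condition is automatic. But if the two arrows have equal length then $p_1 = p_2$ and the only candidate for $q$ is the idempotent $\iota_{s_k(a)}\in\bar{\mc{I}}$, which is \emph{not} in $\bar{\A}^+$. In that case $h = h^\times \neq 0$, and the compatibility condition
\[
(\iota_{s_2(a)}.h^\times.\iota_{s_2(a)})\circ P_a \;=\; P_a\circ(\iota_{s_1(a)}.h^\times.\iota_{s_1(a)})
\]
generally fails for the existing $P_a$, so $h$ is not a morphism in $\Mat_i\bar{\A}$ and the Clean-Up Lemma does not apply. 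The paper's fix (moves S1b and S2b) is to replace $P_a$ by $E_{ij}P_a$ or $P_a E_{ij}$ at the same time; this restores compatibility at the cost of inserting a crossover arrow into $N(a)$. You need this adjustment, and the equal-length case is unavoidable---two length-one $Q$-arrows out of a single generator already exhibit it.

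Your closing worry about $hD(h)$ and termination is well placed; the paper does not spell this out either and simply cites \cite{Zibrowius_2020}. Ordering the cancellations by increasing label length, as you suggest, is the right instinct for making the induction go through.
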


This is shown by inductively applying the \textit{arc-reduction} moves shown in figure~\ref{fig:precurve_simplification_arcs}. In figure~\ref{fig:precurve_simplification_arcs}, the integers $m$ and $n$ are the length of the arc. The clean-up lemma can be applied to show that each arc-reduction move gives a chain isomorphism. The chain isomorphisms induced by these moves S1a, S1b, S2a and S2b are implicitly described in the proof of \ref{prop:reduced_chain_iso_to_simply_faced} in \cite{Zibrowius_2020}, but it will be helpful in section~\ref{section:homomorphisms} to have them written down. This is the content of Lemma~\ref{lem:s1_s_2_iso}.

\begin{figure}[ht!]
	\def\svgwidth{0.9\textwidth}
	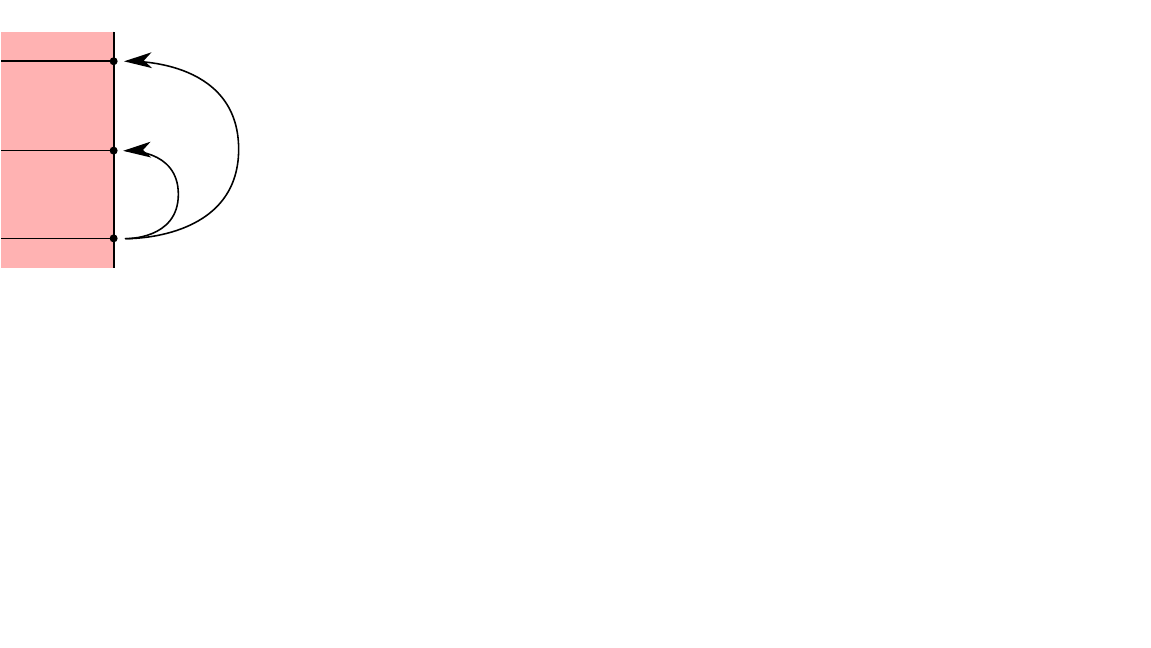
	\caption{The arc-reductions, S1a, S1b, S2a, and S2b. Here it be can assumed that $m,n > 0$. The arcs are shown with orientation so that faces do not have to be drawn. Note that the orientation is generated by the direction of the path algebra element in the differential.}
	\label{fig:precurve_simplification_arcs}
\end{figure}

\begin{lem} \label{lem:s1_s_2_iso}
	The arc-reductions S1 and S2 are precurve chain isomorphisms.
\end{lem}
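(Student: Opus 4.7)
The plan is to apply the Clean-Up Lemma (Lemma~\ref{lem:clean_up}) separately to each of the four arc-reduction moves S1a, S1b, S2a, S2b. For each move I would produce an explicit morphism $h \in \Mor((C,d_C),(C,d_C))$, local to the picture shown in Figure~\ref{fig:precurve_simplification_arcs}, such that $h^2 = 0$, $h D(h) = 0$, and $D(h) h = 0$. The Clean-Up Lemma will then guarantee that $(C,d_C)$ and $(C,d_C + D(h))$ are chain isomorphic via $1+h$, and I would check by direct inspection that $(C,d_C+D(h))$, after discarding canceling components as in the standard Gauss-elimination, is exactly the reduced precurve on the right-hand side of the arc-reduction.

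For each move the candidate $h$ is the obvious rank-one morphism between the generators $x$, $y$, $z$ labeled in Figure~\ref{fig:precurve_simplification_arcs}. Concretely, in S1a and S2a, where a length-$m$ arc passes either through a transposition or a crossover arrow and continues for length $n < m$, I take $h$ to be the morphism from $y$ to $x$ (or $z$ to $x$, depending on orientation) given by the path of length $m - n$ wrapping around the face. This is a pure $\varphi^+$ morphism in the splitting $\varphi = \varphi^+ + \varphi^\times$. In S1b and S2b the arcs match in length, so the natural $h$ has both an idempotent component supported at $x$ and a path component of length $n$; the idempotent part encodes the modification of the $P_a$ datum (creation or absorption of the transposition/crossover arrow), while the path part cancels the appropriate differential component.

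To verify the Clean-Up hypotheses, $h^2 = 0$ is immediate as soon as one observes that the source and target of $h$ are distinct generators (or that the only nonzero self-composition lies in $\im U_f$ for some face $f$ and hence, by the arc relations at $a$, vanishes). For $h D(h) = 0$ and $D(h) h = 0$, I would expand $D(h) = d_C h + h d_C$ (using $\p = 0$ on $\A$), and note that in each local picture the only differential components leaving or entering the target of $h$ are precisely those shown, so the nonzero triple compositions either traverse a path through a puncture and vanish by the arc relations $\rho_2 \rho_4 = \rho_3 \rho_1 = 0$, or they are compositions on a single generator that lie in the identity component and vanish against $\varphi^\times$ terms by the compatibility condition $(\iota_{s_2(a)}.\varphi^\times.\iota_{s_2(a)})\circ P_a = P_a'\circ(\iota_{s_1(a)}.\varphi^\times.\iota_{s_1(a)})$ from the definition of $\Mat_i \bar{\A}$.

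The main obstacle is bookkeeping the splitting $\varphi = \varphi^+ + \varphi^\times$ across the boundary of $N(a)$: the geometric arc-reduction simultaneously alters the differential inside the face and the $P_a$ transposition/crossover arrow inside $N(a)$, and these two effects correspond respectively to the $\varphi^+$ and $\varphi^\times$ parts of $D(h)$. To control this I would translate each move to the algebraic side using the formula for $\mathcal{G}$ on morphisms from the proof of Lemma~\ref{lem:mat_a_and_mat_bar_a_are_equivalent} (equation~\eqref{equation:mcG_on_morphisms}), carry out the computation explicitly for S1a as a representative case, and observe that S1b, S2a, and S2b follow by completely analogous arguments (up to reversing arc orientations and swapping the roles of $s_1(a)$ and $s_2(a)$). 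Once the computations match, the chain isomorphisms $(1+h)$ furnished by the Clean-Up Lemma are precisely the isomorphisms claimed by the lemma.
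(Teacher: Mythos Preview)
Your overall strategy---apply the Clean-Up Lemma to an explicit rank-one morphism $h$ for each of S1a, S1b, S2a, S2b---is exactly the paper's approach, and your description of $h$ for S1a and S2a is correct: $h$ sends $y$ to $(\iota_a.U_f^{m-n}.\iota_a)\,x$ (resp.\ $x$ to $(\iota_a.U_f^{m-n}.\iota_a)\,y$), so $h^\times = 0$ and the compatibility condition on morphisms in $\Mat_i\bar\A$ is trivially satisfied.

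Your treatment of S1b and S2b, however, is off in a way that matters. The correct $h$ is \emph{purely} idempotent: $h(y)=x$ for S1b (and $h(x)=y$ for S2b), with no path component at all. The cancellation of the length-$n$ differential arc is achieved by $D(h)=dh+hd$, not by a path summand of $h$. The subtle point---which you gesture at but do not state correctly---is that since now $h=h^\times\neq 0$, the morphism condition
\[
(\iota_{s_2(a)}.h^\times.\iota_{s_2(a)})\circ P_a \;=\; P_a'\circ(\iota_{s_1(a)}.h^\times.\iota_{s_1(a)})
\]
fails for the original $P_a$. The remedy is to change the \emph{target object}, replacing $P_a$ by $P_a'$ obtained by pre- or post-composing with the elementary matrix $E_{yx}$ (resp.\ $E_{xy}$). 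The new crossover arrow that appears on the right-hand side of the figure is precisely this modification of $P_a$; it is not a component of $h$ or of $D(h)$. With this in place, $(1+h)$ is a chain isomorphism $((C,\{P_a\}),d)\to((C,\{P_a'\}),d+D(h))$, and the right-hand precurve is exactly $((C,\{P_a'\}),d+D(h))$---no Gauss elimination or ``discarding of canceling components'' is needed.
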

\begin{proof}
	First consider S1a. If the precurve on the LHS of S1a in figure~\ref{fig:precurve_simplification_arcs} is $((C, \{P_a \}), d)$, then the complex on the RHS of S1a in figure~\ref{fig:precurve_simplification_arcs} is $((C, \{ P_a\}), d + D(h))$ where 
	$$
	h(p) = \begin{cases}
	(\iota_a.U_f^{m-n}.\iota_a) x & \text{ if } p = y \\
	0 & \text{ else }
	\end{cases}
	$$
	Furthermore, $h \in \Mor_{\Mat_i \bar{\A}}((C, \{P_a\}), (C, \{ P_a\}))$ since $h^\times = 0$ so
	$$(\iota_{s_2(a)}. h^{\times} . \iota_{s_2(a)} ) \circ P_a = P_a' \circ (\iota_{s_1(a)}. h^{\times} . \iota_{s_1(a)} ).$$
	Lemma~\ref{lem:clean_up} gives that 
	$$(1+h): ((C,\{P_a\}), d) \to ((C,\{P_a\}), d + D(h)) \text{ and } $$ 
	$$(1+h) : ((C,\{P_a\}), d + D(h)) \to ((C,\{P_a\}),d)$$ 
	are both chain isomorphisms since $h^2 = h D(h) = D(h) h = 0$. The proof for S2a is identical, except $h$ is defined by
	$$
	h(p) = 
	\begin{cases}
	(\iota_a.U_f^{m-n}.\iota_a)  y & \text{ if } p = x \\
	0 & \text{ else } 	
	\end{cases}.
	$$
	Now consider S1b. Here 
	$$ 
	h(p) = 
	\begin{cases}
	x & \text{ if } p = y \\
	0 & \text{ else } 	
	\end{cases}.
	$$
	It could be that $h \not \in \Mor_{\Mat_i \bar{\A}}((C, \{P_a\}), (C, \{ P_a\}))$ since $h = h^{\times} \neq 0$ and it may not be that
	$$(\iota_{s_2(a)}. h^{\times} . \iota_{s_2(a)} ) \circ P_a = P_a' \circ (\iota_{s_1(a)}. h^{\times} . \iota_{s_1(a)} ).$$
	Let $a$ be the arc on which $x,y,$ and $z$ lie. If $P_a$ is pre- or post-compose with $E_{yx}$ to get $P_a'$ then $h \in \Mor_{\Mat_i \bar{\A}}((C, \{P_a\}), (C, \{ P_a'\}))$. Thus the clean-up lemma gives that 
	$$(1+h): ((C,\{P_a\}), d) \to ((C,\{P_a'\}), d + D(h)) \text{ and }$$
	$$(1+h): ((C,\{P_a'\}), d+ D(h)) \to ((C,\{P_a\}),d)$$ 
	are both chain isomorphisms. The proof for S2b is identical, except $P_a$ is pre- or post-composed with $E_{xy}$ and 
	$$h(p) = 
	\begin{cases}
	y & \text{ if } p = x \\
	0 & \text{ else } 	
	\end{cases}.
	$$
\end{proof}

Note that even if a precurve is simply-faced, it still may not be a collection of decorated immerersed curves, since it is unclear how to treat the crossover arrows and crossings in each arc neighborhood. 

\begin{define}
	A \textit{decorated immersed curve} is a pair $(\gamma, X)$ where $\gamma$ is either closed or $\p \gamma \subset \Sigma_0$, and $X \in GL_n(\F_2)$. 
\end{define}

\begin{construction} \label{construction:drawing_decorations}
	A decorated immersed curve $(\gamma, X)$ is realized as a precurve by the following process:

	\begin{enumerate}[1. ]
		\item
		place $\dim X$ parallel copies of $\gamma$, $\{ \gamma_1, \cdots, \gamma_{\dim X}\}$, in a small neighborhood of eachother, then
		\item 
		pick any arc $a \in A$ intersecting $\gamma$, and index the generators $\gamma_i \cap s_j(a) = e_i^{s_j(a)}$, and finally
		\item
		represent $X$ as crossover arrows and crossings between the $\gamma_i$, according to the same method which $P_a$ is represented on $N(a)$. 
	\end{enumerate}
\end{construction}

\begin{define}
	A \textit{multicurve} is a finite set of decorated immersed curves.
\end{define}

Now for the punchline:

\begin{thm} \label{thm:class_decorated_immersed}
	Any reduced precurve is chain isomorphic to a multicurve 
	$$\{ (\gamma_1, X_1), \cdots, (\gamma_n, X_n) \}.$$ 
\end{thm}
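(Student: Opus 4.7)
My plan is a three-stage reduction that turns an abstract reduced precurve into a geometric collection of decorated immersed curves on $\Sigma$. First, apply Proposition~\ref{prop:reduced_chain_iso_to_simply_faced} to replace the given reduced precurve with a chain isomorphic simply-faced one, so that every generator of $C$ is an endpoint of exactly one nonzero component of $\p$ (with dangling generators connected to the puncture of their neighboring face). After fixing a factorization of each $P_a \in GL_{n_a}(\F)$ into transpositions $T_{ij}$ and elementary additions $E_{ij}$, Construction~\ref{construction:drawing_reduced_precurves} gives a concrete picture on $\Sigma$ in which the $\p$-arrows traverse the faces and the elementary factors of $P_a$ appear as crossings and crossover arrows inside each $N(a)$.

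Next, I would partition this picture into maximal strands. Since each generator is the endpoint of a unique $\p$-arrow and each $P_a$ is a bijection between the two sides of $a$, the strands are either closed loops or arcs with endpoints on $\Sigma_0$. For each strand, forgetting the crossover arrows and interpreting crossings as transverse self-intersections yields a well-defined underlying immersed curve $\gamma \subset \Sigma$. Group strands by the homotopy class of their underlying curve; strands in the same group form a bundle of parallel copies of a common $\gamma$. For each bundle of size $n$, collect the crossings and crossover arrows between its parallel copies into a single element $X \in GL_n(\F)$; Construction~\ref{construction:drawing_decorations} then produces the candidate multicurve $\{(\gamma_i, X_i)\}$ whose realization as a precurve recovers the strand diagram inside each bundle.

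The main obstacle is to ensure this grouping is genuinely compatible with the fixed factorizations of the $P_a$'s: no crossover arrow $E_{ij}$ may connect strands from two different bundles, and the decoration for each bundle must be expressible in a single arc neighborhood in the sense of Construction~\ref{construction:drawing_decorations}. I would resolve any stray inter-bundle crossovers by iteratively applying the arc-reductions S1b and S2b of Figure~\ref{fig:precurve_simplification_arcs}, invoking Lemma~\ref{lem:s1_s_2_iso} (itself a consequence of the Clean-Up Lemma, Lemma~\ref{lem:clean_up}) to guarantee that each move is a chain isomorphism. Each such move either cancels the offending crossover or transports it along a $\p$-arrow into a neighboring arc where it can be absorbed or permuted through. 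Verifying that this iterative cleanup terminates is the technical crux; once it does, the resulting precurve is by construction the realization of a multicurve, and the chain isomorphisms produced along the way compose to give the desired identification.
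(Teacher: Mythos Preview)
Your outline captures the right overall shape---reduce to simply-faced, then clean up crossings and crossover arrows---but the second and third stages contain genuine gaps.

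First, the moves S1b and S2b are arc-reduction moves: they act on the face side when two arcs of \emph{equal} length meet a common generator, and the paper uses them only to reach the simply-faced condition. Once the precurve is simply-faced, the paper explicitly switches to a different toolkit, namely T1--T3 (which rearrange the graphical factorization of $P_a$ without changing the underlying precurve) and M1--M3 (crossing slides and crossover-arrow slides). In particular, \emph{deleting} a crossover arrow between two strands that diverge is done by M3, which applies precisely when the adjacent arcs have different lengths or one runs to a puncture; S1b and S2b do not cover this configuration. So your proposed cleanup using only S1b/S2b cannot in general remove an inter-bundle crossover---at best it can transport one.

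Second, and more seriously, you concede that termination is ``the technical crux'' but supply no mechanism for it. Pushing an arrow from one $N(a)$ to the next does not obviously decrease any complexity measure, and there is nothing preventing the process from cycling. The paper (following \cite{hanselman2016bordered}) resolves this by first using M1 to put the strands in each $N(a)$ into a canonical order with respect to a partial order $\leq$ determined by how pairs of strands eventually diverge, and then assigning to each crossover arrow a \emph{depth}: the number of arc neighborhoods one must traverse along the two incident strands before they separate. The algorithm is arranged so that every pass strictly increases the minimum depth present; since all paths have finite length, this forces termination. Without this ordering-plus-depth framework, or some substitute monovariant, your iterative procedure has no reason to halt.

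Finally, grouping strands by the homotopy class of their underlying curve \emph{before} cleanup is premature. The factorization of $P_a$ into $T_{ij}$'s and $E_{ij}$'s is not canonical, and applying T-moves or M1 can alter which strands appear parallel. The paper does not attempt to identify the eventual bundles in advance; it runs the arrow-sliding algorithm to completion and only then reads off the decorated immersed curves.
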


The algorithm which takes any reduced precurve to a set of decorated immersed curves is known as the \textit{arrow-sliding algorithm} and it is described in Section 3.6 of \cite{hanselman2016bordered}.
The arrow-sliding algorithm does not use the moves S1 and S2 from figure~\ref{fig:precurve_simplification_arcs}. Instead, it uses the new moves T1, T2, T3, M1, M2, and M3 shown in figure~\ref{fig:precurve_simplification_p_a}. Since \cite{hanselman2016bordered} describes the arrow-sliding algorithm in a slightly different context, some translation is required. This is the content of the proof of Theorem~\ref{thm:class_decorated_immersed} in \cite{Zibrowius_2020}. 

This translation relies on two concepts: the construction of an ordering $\leq$ on strands in $N(a)$, and the assignment of a depth to crossover arrows. Before describing this translation, it is useful to define strands on a precurve:
\begin{define} \label{def:strand}
	Given a graphical presentation of a precurve and an arc $a$, a strand in $N(a)$ is a path connecting generators in $C.\iota_{s_1(a)}$ and $C.\iota_{s_2(a)}$ which follows crossings in the graphical reprentation of $P_a$.
\end{define}

Take two strands $s, t$ in $N(a)$. Place an imaginary crossover arrow from strand $s$ to $t$, and ignore all crossover arrows in the precurve. Push that imaginary arrow along each strand in the direction from $a$ to $s_1(a)$. If the curves never diverge, then they are parallel. In this case, $s \leq t$ and $t \leq s$. If the curves diverge and M3a, M3b, or M3c can push that imaginary arrow out, then $s \leq t$. If not, then $s > t$. 

Any simply-faced precurve can be homotoped such that the ordering of the strands with respect to $\leq$ and the orientation of $a$ agree. This may require adding new crossings in $P_a$ to the precurve by applying M1. The resulting precurve is an \textit{ordered} simply-faced precurve.

Crossover arrows in an ordered simply-faced precurve can be assigned a depth. Let $E_{ij}$ pass between strands $s$ and $t$. The depth of a crossover arrow $E_{ij}$ is the minimum number of arc neighborhoods traversed following $s$ and $t$ in either direction until a divergence occurs. This depth can be infinite if $s$ and $t$ are parallel. 

From here, the arrow-sliding algorithm of \cite{hanselman2016bordered} directly applies T1, T2, T3, M1, M2 and M3 to iteratively increase the minimum depth of any arrow in the precurve. Since the paths in the precurve are only of finite length, this process results in a set of decorated immersed curves.

In the language of \cite{hanselman2016bordered}, M1 is a \textit{crossing slide}, while M2 and M3 are \textit{crossover arrow slides}. 
It will be particularly helpful in the proof of Theorems~\ref{thm:p_omega_homo} and \ref{thm:p_homo} to know the explicit chain isomorphisms involved, so they are demonstrated here. 

\begin{lem} \label{lem:t1_t2_t3_iso}
	The moves T1, T2, and T3 are chain isomorphisms of precurves.
\end{lem}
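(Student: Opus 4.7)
The plan is to treat each of T1, T2, T3 individually and observe that all three are purely local relations taking place inside a single arc neighborhood $N(a)$: they change only the factorization (and possibly the matrix $P_a$ itself) encoded inside that neighborhood, and they do not touch the module $C$ or the differential $d$. For each move I would first read off from the figure the two factorizations of the associated matrix that the left- and right-hand sides represent, interpreted as products of transpositions $T_{ij}$ and row-additions $E_{ij}$ in $\mathrm{GL}_n(\F_2)$ using construction~\ref{construction:drawing_reduced_precurves}. This identifies the data $(C,\{P_a\},d)$ on each side of the move.

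In the cases where the two factorizations compose to the same element of $\mathrm{GL}_n(\F_2)$ (for instance T2, a cancellation of two equal transpositions, and T3, a Reidemeister-III relation $T_{ij}T_{jk}T_{ij}=T_{jk}T_{ij}T_{jk}$ among pure crossings), the two precurves are literally the same object and the identity $\id:(C,\{P_a\},d)\to(C,\{P_a\},d)$ is a chain isomorphism: its $\varphi^{\times}$-component is the identity, it trivially intertwines $P_a$ with itself, and $D(\id)=0$. In the remaining cases, where the move genuinely alters $P_a$ to some $P_a'$ (e.g.\ sliding a crossing past a crossover arrow, producing the identity $T_{ij}E_{kl}=E_{\sigma(k)\sigma(l)}T_{ij}$ with $\sigma=(i\,j)$), I would mirror the proof of Lemma~\ref{lem:s1_s_2_iso}: write down an explicit $h\in\Mor_{\Mat_i\bar{\mc{A}}}((C,\{P_a\}),(C,\{P_a'\}))$ supported on the two strands involved in the move, check by direct computation that $h^2=hD(h)=D(h)h=0$, and invoke the Clean-Up Lemma~\ref{lem:clean_up} to conclude that $1+h$ is a chain isomorphism with inverse $1+h$.

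The main obstacle I expect is the morphism-condition check for the T3 case when a crossover arrow is involved, since there one must verify that the candidate $h$ actually lies in $\Mor_{\Mat_i\bar{\mc{A}}}$, i.e.\ that $(\iota_{s_2(a)}.h^{\times}.\iota_{s_2(a)})\circ P_a = P_a'\circ(\iota_{s_1(a)}.h^{\times}.\iota_{s_1(a)})$ for the new decoration $P_a'$. This is the same subtlety that appears in the S1b/S2b portion of the proof of Lemma~\ref{lem:s1_s_2_iso}, and it forces the definition of $P_a'$: one must pre- or post-compose $P_a$ with the appropriate elementary factor $E_{ij}$ dictated by the transposition induced by the crossing slide, after which the intertwining identity reduces to a finite check of matrix relations in $\mathrm{GL}_n(\F_2)$. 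Once the right $P_a'$ is identified for each of T1, T2, T3, the Clean-Up Lemma packages the remaining verification into the algebraic identities $h^2=hD(h)=D(h)h=0$, completing the proof.
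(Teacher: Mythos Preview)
Your proposal is more elaborate than necessary. The paper's proof is a single observation: T1, T2, and T3 change only the \emph{graphical} representation of $P_a$---that is, the chosen factorization of $P_a$ into elementary matrices---and never the matrix $P_a$ itself, nor $C$, nor $d$. Hence both sides of each T-move are literally the same object $(C,\{P_a\},d)$ of $\Cx(\Mat_i\bar{\A})$, and the identity is the required chain isomorphism.

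You correctly identify this mechanism for what you call the ``easy'' cases (T2 as $T_{ij}^2=1$ or $E_{ij}^2=1$, T3 as a braid-type relation), but you then posit ``remaining cases, where the move genuinely alters $P_a$ to some $P_a'$.'' There are no such cases among T1, T2, T3. The example you give---$T_{ij}E_{kl}=E_{\sigma(k)\sigma(l)}T_{ij}$---is itself an identity in $\mathrm{GL}_n(\F_2)$, so the product (i.e.\ $P_a$) is unchanged; only the factorization is rewritten. The caption to the relevant figure already says as much: ``The moves T1, T2, and T3, which change the graphical representation of $P_a$.'' Consequently the Clean-Up Lemma machinery you propose is never needed here (and indeed would be awkward to apply, since T-moves do not alter $d$, whereas the Clean-Up Lemma produces $(C,d+D(h))$). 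Your first paragraph, restricted to the observation that the identity works because the underlying precurve is unchanged, is the whole proof.
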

\begin{proof}
	The moves T1, T2, and T3 only change the graphical representation of the precurve, not the algebraic structure of the precurve itself. Thus the chain isomorphism induced by T1, T2, and T3 is the identity 
\end{proof}

\begin{lem} \label{lem:m1_iso}
	The crossing slide M1 is a chain isomorphism of precurves.
\end{lem}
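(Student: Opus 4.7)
The plan is to follow the template already established by Lemma~\ref{lem:t1_t2_t3_iso} and Lemma~\ref{lem:s1_s_2_iso}. First I would pin down precisely what M1 does to the algebraic data $(C, \{P_a\}, d)$ of a simply-faced precurve. Since M1 is described as a \emph{crossing slide} in the sense of \cite{hanselman2016bordered} and is used in the preceding discussion to rearrange crossings within a single neighborhood $N(a)$, it modifies the prescribed decomposition $P_a = P_a^{l_a} \cdots P_a^1$ into transpositions and row-additions. There are two cases to distinguish: either M1 simply rewrites the graphical decomposition of a fixed matrix $P_a$ (for instance by inserting a canceling pair $T_{ij} T_{ij} = \id$, or swapping the order of two commuting factors), or M1 genuinely changes $P_a$ to some $P_a'$ related by pre- or post-composition with a transposition.

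In the first case, neither the module $C$, the family $\{P_a\}$, nor the differential $d$ is changed; only the picture inside $N(a)$ changes. The chain isomorphism is then the identity, exactly as in Lemma~\ref{lem:t1_t2_t3_iso}, and nothing needs to be checked. In the second case, I would produce an explicit candidate isomorphism $\varphi: ((C, \{P_a\}), d) \to ((C, \{P_a'\}), d)$ whose identity component $\varphi^\times$ realizes the transposition $T_{ij}$ in the appropriate basis, and whose $\bar{\A}^+$ component is zero. The key condition to verify is the intertwining relation
\[
(\iota_{s_2(a)}.\varphi^{\times}.\iota_{s_2(a)}) \circ P_a \;=\; P_a' \circ (\iota_{s_1(a)}.\varphi^{\times}.\iota_{s_1(a)}),
\]
which, by construction of $P_a'$ and $\varphi^\times$ as the same transposition on one side, reduces to checking that the two sides agree on each basis vector $e_k^{s_1(a)}$. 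Since $\varphi^\times$ has no $\bar{\A}^+$ component and the differential is unchanged, $D(\varphi) = d \circ \varphi + \varphi \circ d = 0$ automatically, and $\varphi^2 = \id$ because transpositions over $\F_2$ are involutions.

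If writing $\varphi$ as $\id + h$ for a nilpotent off-diagonal piece $h$ is more convenient, then I would appeal directly to the Clean-Up Lemma~\ref{lem:clean_up} applied to $h$, after verifying the three hypotheses $h^2 = 0$, $hD(h) = 0$, and $D(h)h = 0$; the first holds because $h$ is strictly off-diagonal (lower- or upper-triangular in the two generator slots involved in the transposition) and the last two follow from $h$ being identity-component and the differential being in $\bar{\A}^+$. The main obstacle is purely bookkeeping: keeping track of which side of the arc the transposition sits on, and ensuring that when other crossings or crossover arrows are present in $N(a)$, the intertwining relation still holds for the modified $P_a'$. Because everything happens in a single $N(a)$ and M1 is a local move, this verification decomposes into a finite check on the two strands involved, mirroring the strand-level analysis in the proof of Lemma~\ref{lem:s1_s_2_iso}.
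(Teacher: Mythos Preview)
Your second case is exactly what the paper does: it defines $f$ to be the map interchanging the two generators $x$ and $y$, checks the intertwining relation $(\iota_{s_2(a)}.f.\iota_{s_2(a)})\circ P_a = P_a' \circ (\iota_{s_1(a)}.f.\iota_{s_1(a)})$, observes that $f$ is a chain map, and notes $f$ is its own inverse. So the core of your proposal matches.

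Two small corrections are worth making. First, the differential is \emph{not} unchanged: sliding the crossing past the arcs swaps which generator carries the length-$m$ arc and which carries the length-$n$ arc, so $d$ becomes a new $d'$. The chain-map condition you need is $f d = d' f$, not $[f,d]=0$; this holds precisely because $f$ swaps $x$ and $y$ while $d'$ is $d$ with the roles of $x$ and $y$ swapped. Second, your alternative via the Clean-Up Lemma is not well-founded: writing a transposition as $\id + h$ over $\F_2$ gives $h = E_{ii}+E_{jj}+E_{ij}+E_{ji}$, which has nonzero diagonal part and in particular is not ``strictly off-diagonal.'' It happens that $h^2=0$ over $\F_2$ anyway, but the Clean-Up Lemma would then produce an isomorphism to $(C,d+D(h))$ with the \emph{same} $\{P_a\}$, which is not the target precurve of M1. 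The paper avoids this by working with the transposition directly rather than trying to force it into the $(1+h)$ mold; you should drop the alternative and stick with your direct argument.
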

\begin{proof}
	Let the LHS of M1 in figure~\ref{fig:precurve_simplification_p_a} be $((C, \{P_a\}), d)$ and the RHS of M1 in figure~\ref{fig:precurve_simplification_p_a} be $((C, \{P_a'\}), d')$. Let $f: C \to C$ be the map interchanging $x$ and $y$. Then 
	$$f  \in \Mor_{\Mat_i \bar{\A}}((C, \{P_a\}), (C, \{P_a'\}))$$
	since 
	$$ (\iota_{s_2(a)}. f. \iota_{s_2(a)} ) \circ P_a = P_a' \circ (\iota_{s_1(a)}. f. \iota_{s_1(a)} ).$$
	Since $fd = d'f$ it follows that $f$ is a chain map and
	$$f: ((C, \{P_a\}), d) \to (C, \{P_a'\}), d') \text{ and }$$
	$$f: ((C, \{P_a'\}, d') \to (C, \{P_a\}), d)$$
	are chain isomorphisms. 
\end{proof}

\begin{lem} \label{lem:m2_iso}
	The crossover arrow slide M2 is a chain isomorphism of precurves.
\end{lem}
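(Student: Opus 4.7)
The plan is to follow the template established in Lemma~\ref{lem:s1_s_2_iso} and Lemma~\ref{lem:m1_iso}: produce an explicit morphism $h \in \Mor_{\Mat_i \bar{\A}}((C, \{P_a\}), (C, \{P'_a\}))$ that realizes the sliding of the crossover arrow across an arc endpoint, and then invoke the Clean-Up Lemma (Lemma~\ref{lem:clean_up}) to conclude that $1+h$ is a chain isomorphism between the two presentations of the precurve. Unlike M1, where the underlying algebraic data is genuinely unchanged and only the graphical presentation is redrawn, M2 moves a crossover arrow between a location where it is recorded as part of $P_a$ (a row-addition factor $E_{ij}$) and a location where it is recorded as a component of the differential along a short path in $\bar{\A}^+$. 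Consequently the differentials $d$ and $d'$ on the two sides actually differ, and the correct statement is $d' = d + D(h)$ for the $h$ constructed below.

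First I would label the LHS precurve $((C, \{P_a\}), d)$ and the RHS precurve $((C, \{P'_a\}), d')$, where $P'_a$ differs from $P_a$ by pre- or post-composition with a row-addition $E_{ij}$ (the $E_{ij}$ that was absorbed into, or released from, $N(a)$). Following the pattern of the S1b/S2b cases, I would define
\[
h(p) = \begin{cases} x & \text{if } p = y, \\ 0 & \text{else}, \end{cases}
\]
where $x,y$ are the two generators whose strands are joined by the slid crossover arrow, with the roles of $x$ and $y$ chosen so that the identity-component $h^\times$ precisely implements the pre/post-composition by $E_{ij}$ needed to match $P_a$ with $P'_a$. Once $h$ is set up this way, the compatibility relation
\[
(\iota_{s_2(a)}.h^\times.\iota_{s_2(a)}) \circ P_a = P'_a \circ (\iota_{s_1(a)}.h^\times.\iota_{s_1(a)})
\]
is a direct matrix computation, placing $h$ in $\Mor_{\Mat_i \bar{\A}}((C,\{P_a\}),(C,\{P'_a\}))$.

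Next I would verify the three nilpotency hypotheses of the Clean-Up Lemma. The identity $h^2=0$ is immediate because $h$ is supported on a single basis vector and lands in a different one. The conditions $hD(h)=0$ and $D(h)h=0$ reduce to checking that the only differential contributions that can be created by the introduced homotopy land in the expected endpoints of the slid arrow; this is the same local check carried out for the other moves, and it uses only that the original precurve is reduced and simply-faced. Lemma~\ref{lem:clean_up} then gives that $1+h$ provides mutually inverse chain isomorphisms $((C,\{P_a\}), d) \to ((C,\{P'_a\}), d')$ and back.

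The main obstacle I anticipate is purely bookkeeping: correctly identifying which side of $N(a)$ the slid crossover arrow sits on, which determines whether $E_{ij}$ appears by pre- versus post-composition with $P_a$, and correspondingly whether $h$ should send $y \mapsto x$ or $x \mapsto y$. There are two mirror-image subcases of M2 (depending on the orientation of the arc and of the crossover arrow), and each requires the appropriate choice. Once the correct dictionary between the graphical sliding operation and the algebraic row-addition is fixed, the verification of the Clean-Up hypotheses is essentially identical to the verifications already carried out in Lemma~\ref{lem:s1_s_2_iso}.
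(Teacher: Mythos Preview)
Your proposal misreads what the move M2 actually does, and this leads to a genuine gap.

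You describe M2 as moving a crossover arrow ``between a location where it is recorded as part of $P_a$ \ldots\ and a location where it is recorded as a component of the differential along a short path in $\bar{\A}^+$.'' That is the description of S1b/S2b, not of M2. In M2 the crossover arrow is slid \emph{across an entire face}: it begins in one arc neighborhood (say the ``left'' one in the picture), passes along the two parallel length-$n$ arcs in the face, and re-enters another arc neighborhood (the ``right'' one). The underlying differential $d$ is the \emph{same} on both sides of M2; what changes is $\{P_a\}$ in \emph{two} places, not one. Your claim that ``the differentials $d$ and $d'$ on the two sides actually differ'' is therefore incorrect, and the single homotopy $h$ you write down---supported on a single pair $(x,y)$ and adjusting $P_a$ by a single $E_{ij}$---can only perform half of the move.

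The paper's argument reflects this two-step structure explicitly. It defines $h_1$ sending the top-left generator to the bottom-left generator and $h_2$ sending the top-right generator to the bottom-right generator. Applying $(1+h_1)$ is the reverse of S1b/S2b: it removes the crossover arrow from the left arc neighborhood at the cost of creating differential arcs in the face. Applying $(1+h_2)$ is then S1b/S2b in the forward direction: it absorbs those differential arcs and deposits a crossover arrow in the right arc neighborhood. Each step is a chain isomorphism by Lemma~\ref{lem:s1_s_2_iso}, and since $h_2 h_1 = 0$ the composite is $(1+h_2)(1+h_1) = 1 + h_1 + h_2$. Your template is right in spirit, but you need both $h_1$ and $h_2$; a single $h$ cannot simultaneously delete the $E_{ij}$ on one side and insert one on the other while leaving $d$ unchanged.
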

\begin{proof}
	See the drawing of M2 in figure~\ref{fig:precurve_simplification_p_a}.
	Let $h_1$ take the top left-most generator to the bottom left-most generator in figure~\ref{fig:precurve_simplification_p_a}, and let $h_2$ take the top right-most generator to the bottom right-most generator in figure~\ref{fig:precurve_simplification_p_a}. The chain isomorphism $(1+h_1)$ performs the reverse action of S1b and S2b from Lemma~\ref{lem:s1_s_2_iso}, and pushes the crossover arrow out of the left-most arc neighborhood. The chain isomorphism $(1+h_2)$ pushes an arrow into the right-most arc neighborhood. Applying $(1+h_2)$ is the same as applying either S1b or S2b from Lemma~\ref{lem:s1_s_2_iso}. Thus the compositve isomorphism is $(1+h_2) \circ (1+h_1)$. But $(1+h_2) \circ (1+h_1) = 1 + h_1 + h_2$, so the arrow slide M2 corresponds to the chain isomorphism $1+h_1 + h_2$.
\end{proof}

\begin{lem} \label{lem:m3_iso}
	The crossover arrow slide M3 is a chain isomorphism of precurves.
\end{lem}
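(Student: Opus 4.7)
The plan is to mimic the strategy used for M2 in Lemma~\ref{lem:m2_iso}, treating the crossover arrow slide M3 as a composition of the local chain isomorphisms already constructed by the clean--up lemma (Lemma~\ref{lem:clean_up}) in the proof of Lemma~\ref{lem:s1_s_2_iso}. The key observation is that every ``slide'' of a crossover arrow from one arc neighborhood into an adjacent one is algebraically an S1b/S2b move composed in reverse with an S1b/S2b move: one application removes the arrow from the arc neighborhood it currently sits in, and the other application inserts it into the new arc neighborhood. Each of these two steps is a chain isomorphism of the form $(1+h_i)$ by Lemma~\ref{lem:s1_s_2_iso}, and the composition $(1+h_1)(1+h_2) = 1 + h_1 + h_2 + h_1 h_2$ is therefore also a chain isomorphism.

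First, I would fix notation: label the three strands $x,y,z$ entering the configuration from the LHS of M3 (as in the figure), and specify which two strands the crossover arrow initially connects and which two it connects after the move. In each of the subcases M3a, M3b, M3c, the move happens across a face $f$ at which two of the three strands diverge, so the combinatorics of where the arrow lands after being ``pushed through'' the divergence is encoded in the crossings and differential components local to that face. I would write down the precurve $((C,\{P_a\}),d)$ on the LHS and the precurve $((C,\{P_a'\}),d')$ on the RHS explicitly at the level of the generators $x,y,z$ lying on the relevant arcs, together with the modifications to the adjacent $P_a$'s prescribed by Construction~\ref{construction:drawing_decorations}.

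Next, I would produce the explicit maps $h_1,h_2$: $h_1$ is the ``arrow removal'' map on the outgoing side of the divergence (an idempotent component between the two strands whose source and target lie on the same arc as the outgoing endpoint of the crossover arrow), and $h_2$ is the ``arrow reinsertion'' map on the incoming side. By construction each $h_i$ is square zero, satisfies $h_i D(h_i)=D(h_i)h_i=0$, and belongs to $\Mor_{\Mat_i\bar{\A}}$ once the idempotent rearrangements in $P_a,P_a'$ (pre- and post-composition with the appropriate $E_{ij}$'s, exactly as in the S1b/S2b proof of Lemma~\ref{lem:s1_s_2_iso}) are carried out. The clean--up lemma then gives that $(1+h_1)$ and $(1+h_2)$ are chain isomorphisms between the intermediate precurves, and their composition realizes the transformation M3.

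The main obstacle is purely bookkeeping: M3 has several variants (M3a, M3b, M3c), and in each case one must verify that after applying $(1+h_1)$ the intermediate precurve really is the one on which $(1+h_2)$ can be applied, and that the total effect on the differential $d$ and on the isomorphisms $\{P_a\}$ matches precisely the picture shown in Figure~\ref{fig:precurve_simplification_p_a}. Once the correct $h_1,h_2$ are identified for each subcase, the verification that $h_i^2=h_i D(h_i)=D(h_i)h_i=0$ is immediate because each $h_i$ has image and source supported on disjoint single generators, and the conclusion $(1+h_2)\circ(1+h_1)=1+h_1+h_2$ matches the expected crossover arrow decoration on the output multicurve, completing the proof.
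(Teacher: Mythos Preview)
Your proposal takes a different route from the paper and is based on a slight misreading of what M3 does geometrically. The paper's proof is much shorter: it observes that M3 is a \emph{single} local modification, not a compound one like M2. Specifically, the paper defines a single map $h$ sending the top generator to the bottom generator in the M3 picture, checks that $h$ lies in $\Mor_{\Mat_i\bar{\A}}$ after adjusting $P_a$ by an elementary matrix (exactly as in S1b/S2b), and invokes the clean--up lemma once to conclude that $(1+h)$ is a chain isomorphism. The details are literally identical to the S1b/S2b case in Lemma~\ref{lem:s1_s_2_iso}.

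The reason your two-step decomposition is unnecessary is that in M3 the crossover arrow is being pushed \emph{out} at a divergence of strands, not transported across a face into another arc neighborhood. There is no ``reinsertion'' step: the arrow simply disappears (M3c) or is absorbed into the differential/crossing structure at the point of divergence (M3a, M3b). Thus your $h_2$ would be zero, and your composite $(1+h_2)(1+h_1)$ collapses to $(1+h_1)$, which is exactly the paper's single map. Your argument is not wrong, but it carries extra scaffolding borrowed from the M2 case that does not apply here, and the phrasing ``arrow reinsertion on the incoming side'' suggests you have not yet noticed that M3 is a terminal move rather than a transit move.
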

\begin{proof} See figure~\ref{fig:precurve_simplification_p_a}. Let the LHS of M3a, M3b, and M3c be denoted $(C, \{P_a \}, d)$ and the RHS of M3a, M3b, and M3c be denoted $(C, \{P_a'\}, d)$.
Define $h$ to take the top generator to the bottom generator in each representation of M3 in figure~\ref{fig:precurve_simplification_p_a}. The clean-up lemma gives that $(1+h)$ is a chain isomorphism. For details, see the proof of \ref{lem:s1_s_2_iso}, which is identical.
\end{proof}

\begin{figure}[p]
	\begin{subfigure}{\textwidth}
		\def\svgwidth{0.9\textwidth}
\begingroup%
  \makeatletter%
  \providecommand\color[2][]{%
    \errmessage{(Inkscape) Color is used for the text in Inkscape, but the package 'color.sty' is not loaded}%
    \renewcommand\color[2][]{}%
  }%
  \providecommand\transparent[1]{%
    \errmessage{(Inkscape) Transparency is used (non-zero) for the text in Inkscape, but the package 'transparent.sty' is not loaded}%
    \renewcommand\transparent[1]{}%
  }%
  \providecommand\rotatebox[2]{#2}%
  \newcommand*\fsize{\dimexpr\f@size pt\relax}%
  \newcommand*\lineheight[1]{\fontsize{\fsize}{#1\fsize}\selectfont}%
  \ifx\svgwidth\undefined%
    \setlength{\unitlength}{575.7855777bp}%
    \ifx\svgscale\undefined%
      \relax%
    \else%
      \setlength{\unitlength}{\unitlength * \real{\svgscale}}%
    \fi%
  \else%
    \setlength{\unitlength}{\svgwidth}%
  \fi%
  \global\let\svgwidth\undefined%
  \global\let\svgscale\undefined%
  \makeatother%
  \begin{picture}(1,0.51336015)%
    \lineheight{1}%
    \setlength\tabcolsep{0pt}%
    \put(0,0){\includegraphics[width=\unitlength,page=1]{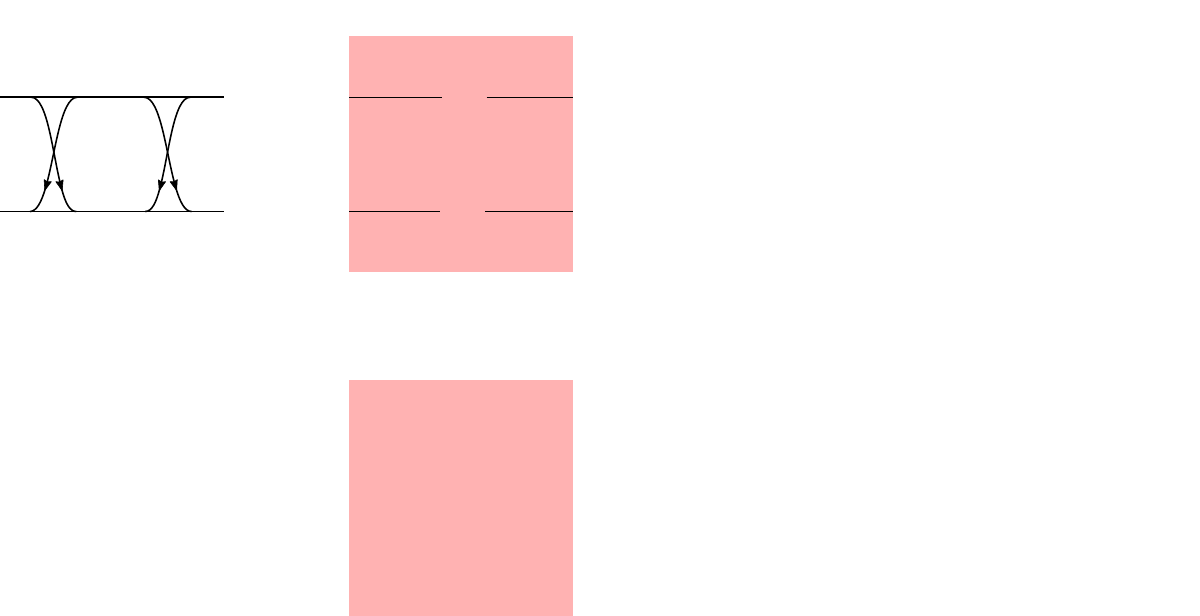}}%
    \put(-0.00113558,0.21343021){\makebox(0,0)[lt]{\lineheight{1.25}\smash{\begin{tabular}[t]{l}$\mathrm{T2}:$\end{tabular}}}}%
    \put(0,0){\includegraphics[width=\unitlength,page=2]{T1_T2_T3.pdf}}%
    \put(-0.00113559,0.49999529){\makebox(0,0)[lt]{\lineheight{1.25}\smash{\begin{tabular}[t]{l}$\mathrm{T1}:$\end{tabular}}}}%
    \put(0,0){\includegraphics[width=\unitlength,page=3]{T1_T2_T3.pdf}}%
    \put(0.51943921,0.40623066){\makebox(0,0)[lt]{\lineheight{1.25}\smash{\begin{tabular}[t]{l}$\mathrm{T3}:$\end{tabular}}}}%
    \put(0,0){\includegraphics[width=\unitlength,page=4]{T1_T2_T3.pdf}}%
  \end{picture}%
\endgroup%

		\caption{The moves T1, T2, and T3, which change the graphical representation of $P_a$. }
	\end{subfigure}
	\begin{subfigure}{\textwidth}
		\def\svgwidth{0.9\textwidth}
		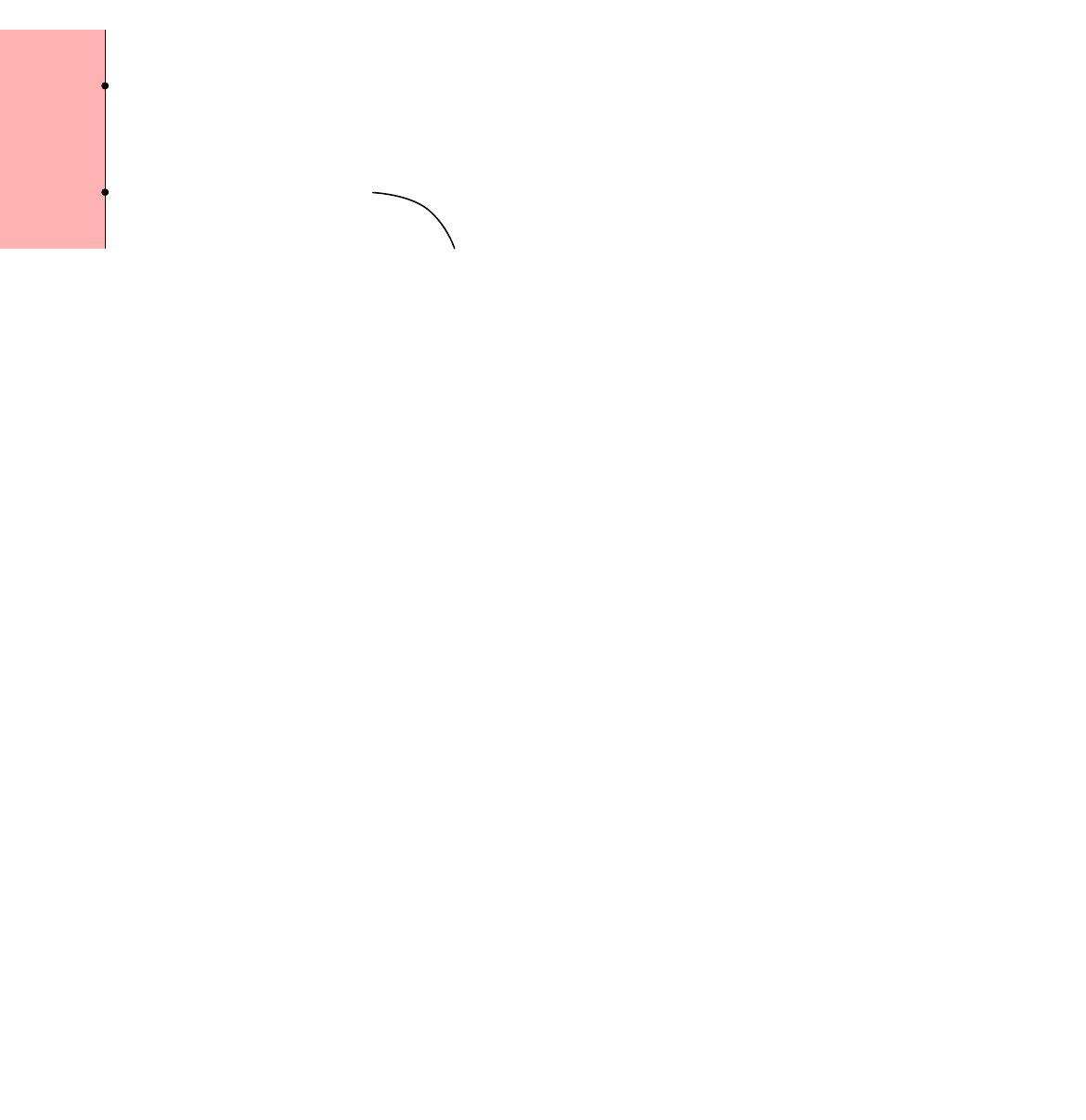
		\caption{The moves M1, M2, M3a, M3b, and M3c, which add crossings and crossover arrows.}
	\end{subfigure}
	\caption{All moves necessary to reduce a simply-faced precurve to a decorated immersed curve.}
	\label{fig:precurve_simplification_p_a}
\end{figure}

\begin{example} \label{example:simple_precurve_simplification}
	Now to give an example of the precurve simplification process in action. Consider the $\mc{R}$ complex shown below:
	\begin{center}
		\begin{tikzcd}[sep=small]
			& & \bullet_y \arrow[rrd, red, "Q"] & &	\\
			\bullet_x \arrow[rru, "U"] \arrow[rrd, "U"] &  & & & \bullet_z \\
			& & \bullet_w \arrow[rru, "Q", red] &  & 
		\end{tikzcd}
	\end{center}
	The precurve representation of this complex is shown in figure~\ref{fig:simple_precurve_example}. The simplification of this precurve to an immersed curve is shown in figure~\ref{fig:simple_precurve_example} as well. 
	\begin{figure}[p]
		\begin{subfigure}{0.45\textwidth}
			\def\svgwidth{0.9\textwidth}
			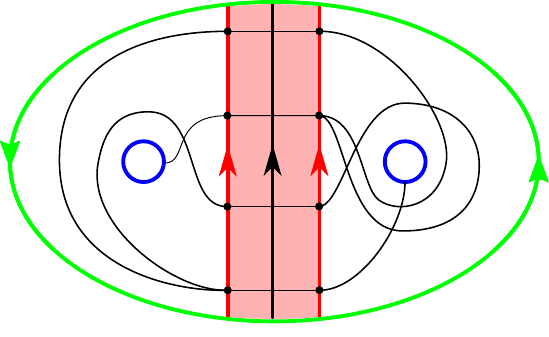
		\end{subfigure}
		\hfill
		\begin{subfigure}{0.45\textwidth}
			\begin{center}
				\begin{tikzcd}[sep=small]
				& & \bullet_y \arrow[rrd, red, "Q"] & &	\\
				\bullet_x \arrow[rru, "U"] \arrow[rrd, "U"] &  & & & \bullet_z \\
				& & \bullet_w \arrow[rru, "Q", red] &  & 
				\end{tikzcd}
			\end{center}
		\end{subfigure} \\
		\begin{subfigure}{0.45\textwidth}
			\def\svgwidth{0.9\textwidth}
\begingroup%
  \makeatletter%
  \providecommand\color[2][]{%
    \errmessage{(Inkscape) Color is used for the text in Inkscape, but the package 'color.sty' is not loaded}%
    \renewcommand\color[2][]{}%
  }%
  \providecommand\transparent[1]{%
    \errmessage{(Inkscape) Transparency is used (non-zero) for the text in Inkscape, but the package 'transparent.sty' is not loaded}%
    \renewcommand\transparent[1]{}%
  }%
  \providecommand\rotatebox[2]{#2}%
  \newcommand*\fsize{\dimexpr\f@size pt\relax}%
  \newcommand*\lineheight[1]{\fontsize{\fsize}{#1\fsize}\selectfont}%
  \ifx\svgwidth\undefined%
    \setlength{\unitlength}{263.40278339bp}%
    \ifx\svgscale\undefined%
      \relax%
    \else%
      \setlength{\unitlength}{\unitlength * \real{\svgscale}}%
    \fi%
  \else%
    \setlength{\unitlength}{\svgwidth}%
  \fi%
  \global\let\svgwidth\undefined%
  \global\let\svgscale\undefined%
  \makeatother%
  \begin{picture}(1,0.7217293)%
    \lineheight{1}%
    \setlength\tabcolsep{0pt}%
    \put(0,0){\includegraphics[width=\unitlength,page=1]{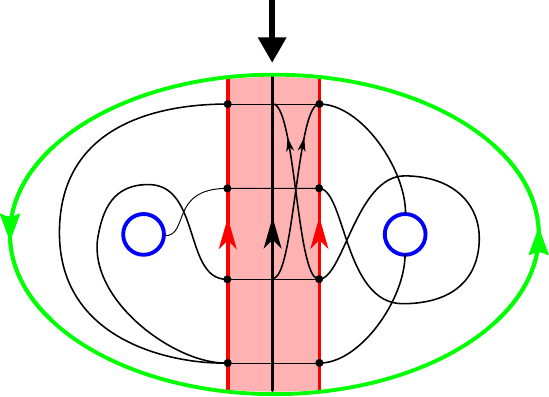}}%
    \put(0.5410418,0.64999329){\makebox(0,0)[lt]{\lineheight{1.25}\smash{\begin{tabular}[t]{l}S1b\end{tabular}}}}%
    \put(0,0){\includegraphics[width=\unitlength,page=2]{simple_precurve_example_1.pdf}}%
  \end{picture}%
\endgroup%

		\end{subfigure}
		\hfill
		\begin{subfigure}{0.45\textwidth}
			\begin{center}
				\begin{tikzcd}[sep=small]
				& & \bullet \arrow[rrd, red, "Q"] & &	\\
				\bullet \arrow[rru, "U"] &  & & & \bullet \\
				& & \bullet \arrow[rru, "Q", red] &  & 
				\end{tikzcd}
			\end{center}
		\end{subfigure} \\
		\begin{subfigure}{0.45\textwidth}
			\def\svgwidth{0.9\textwidth}
\begingroup%
  \makeatletter%
  \providecommand\color[2][]{%
    \errmessage{(Inkscape) Color is used for the text in Inkscape, but the package 'color.sty' is not loaded}%
    \renewcommand\color[2][]{}%
  }%
  \providecommand\transparent[1]{%
    \errmessage{(Inkscape) Transparency is used (non-zero) for the text in Inkscape, but the package 'transparent.sty' is not loaded}%
    \renewcommand\transparent[1]{}%
  }%
  \providecommand\rotatebox[2]{#2}%
  \newcommand*\fsize{\dimexpr\f@size pt\relax}%
  \newcommand*\lineheight[1]{\fontsize{\fsize}{#1\fsize}\selectfont}%
  \ifx\svgwidth\undefined%
    \setlength{\unitlength}{263.40278622bp}%
    \ifx\svgscale\undefined%
      \relax%
    \else%
      \setlength{\unitlength}{\unitlength * \real{\svgscale}}%
    \fi%
  \else%
    \setlength{\unitlength}{\svgwidth}%
  \fi%
  \global\let\svgwidth\undefined%
  \global\let\svgscale\undefined%
  \makeatother%
  \begin{picture}(1,0.72309321)%
    \lineheight{1}%
    \setlength\tabcolsep{0pt}%
    \put(0,0){\includegraphics[width=\unitlength,page=1]{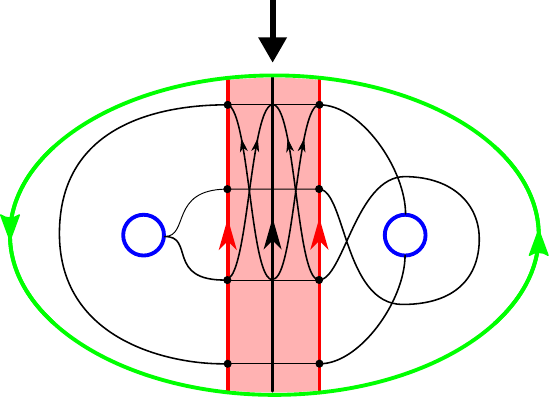}}%
    \put(0.54125708,0.65593307){\makebox(0,0)[lt]{\lineheight{1.25}\smash{\begin{tabular}[t]{l}S2b\end{tabular}}}}%
    \put(0,0){\includegraphics[width=\unitlength,page=2]{simple_precurve_example_2.pdf}}%
  \end{picture}%
\endgroup%

		\end{subfigure}
		\hfill
		\begin{subfigure}{0.45\textwidth}
			\begin{center}
				\begin{tikzcd}[sep=small]
				& & \bullet  & &	\\
				\bullet \arrow[rru, "U"] &  & & & \bullet \\
				& & \bullet \arrow[rru, "Q", red] &  & 
				\end{tikzcd}
			\end{center}
		\end{subfigure} \\
		\begin{subfigure}{0.45\textwidth}
			\def\svgwidth{0.9\textwidth}
\begingroup%
  \makeatletter%
  \providecommand\color[2][]{%
    \errmessage{(Inkscape) Color is used for the text in Inkscape, but the package 'color.sty' is not loaded}%
    \renewcommand\color[2][]{}%
  }%
  \providecommand\transparent[1]{%
    \errmessage{(Inkscape) Transparency is used (non-zero) for the text in Inkscape, but the package 'transparent.sty' is not loaded}%
    \renewcommand\transparent[1]{}%
  }%
  \providecommand\rotatebox[2]{#2}%
  \newcommand*\fsize{\dimexpr\f@size pt\relax}%
  \newcommand*\lineheight[1]{\fontsize{\fsize}{#1\fsize}\selectfont}%
  \ifx\svgwidth\undefined%
    \setlength{\unitlength}{263.40279756bp}%
    \ifx\svgscale\undefined%
      \relax%
    \else%
      \setlength{\unitlength}{\unitlength * \real{\svgscale}}%
    \fi%
  \else%
    \setlength{\unitlength}{\svgwidth}%
  \fi%
  \global\let\svgwidth\undefined%
  \global\let\svgscale\undefined%
  \makeatother%
  \begin{picture}(1,0.72172957)%
    \lineheight{1}%
    \setlength\tabcolsep{0pt}%
    \put(0,0){\includegraphics[width=\unitlength,page=1]{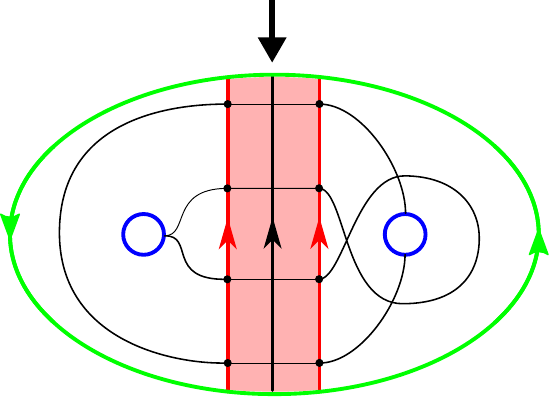}}%
    \put(0.53883117,0.65008824){\makebox(0,0)[lt]{\lineheight{1.25}\smash{\begin{tabular}[t]{l}T2\end{tabular}}}}%
    \put(0,0){\includegraphics[width=\unitlength,page=2]{simple_precurve_example_3.pdf}}%
  \end{picture}%
\endgroup%

		\end{subfigure}
		\hfill
		\begin{subfigure}{0.45\textwidth}
			\begin{center}
				\begin{tikzcd}[sep=small]
				& & \bullet & &	\\
				\bullet \arrow[rru, "U"]  &  & & & \bullet \\
				& & \bullet \arrow[rru, "Q", red] &  & 
				\end{tikzcd}
			\end{center}
		\end{subfigure} \\
		\begin{subfigure}{0.45\textwidth}
			\def\svgwidth{0.9\textwidth}
			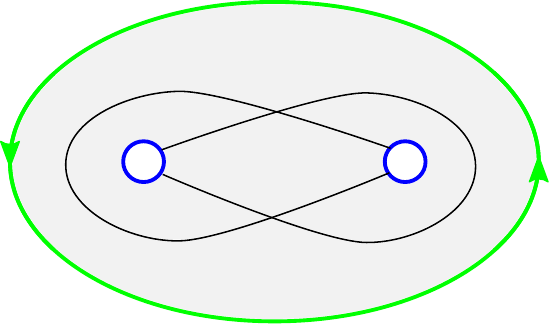
		\end{subfigure}
		\hfill
		\caption{An example of a precurve being simplified to two immersed curves. On the left hand side is the precurve, and on the right hand side is the $\mc{R}$ complex, found by applying the functor $\mc{G}$ to the precurve. At the bottom is the final immersed curve.}
		\label{fig:simple_precurve_example}
	\end{figure}
\end{example}

\section{Classifying almost $\iota$-complexes} \label{section:classifying_almost_iota_complexes}

This section realizes the classification of almost $\iota$-complexes up to local equivalence as a classification problem of complexes over $\mc{R} := \F[U,Q]/(UQ)$. We will show use the fact that almost $\iota$-complexes can be seen as curves on the twice punctured disk.

\subsection{Realizing almost $\iota$-complexes as immersed curves} \label{subsection_almost_iota_complexes_as_immersed_curves}

Recall from Section~\ref{section:prelims} that any almost $\iota$-complex is $\mathcal{C} = (C, \partial, \biota)$ is composed of a $\Z$-graded chain complex $(C, \partial)$ over $\F[U]$ and an approximate involution $\biota: C \to \C$ for which $\biota^2 \simeq \id \mod U$. Let the generators of $C$ be $\{ x_1, \cdots, x_n \}$. Denote the $\F$-span of these generators by $X$. We can interpret this almost $\iota$-complex $\C$ as a type D structure over $\mc{R}/(Q^2) = \F[U,Q]/(UQ, Q^2)$ which we will denote $\CI$. The elements of $\CI$ are $\mc{R} / (Q^2) \otimes_{\F} X$ and the differential is given by $\delta^1_{\CI} : X \to \mc{R}/(Q^2) \otimes_{\F} X$ where
$$\delta^1_{\CI} : = \p + Q \omega : X \to \mc{R}/(Q^2) \otimes_{\F} X.$$
\begin{define} \label{def:r_mod_q_2_representative_of_c}
	The \textit{$\mc{R}/(Q^2)$ representative of $\C$} is $\CI$ for any almost $\iota$-complex $\C$.
\end{define}
Recall from Lemma 3.23 of \cite{dai2018infiniterank} that $\omega^2 \equiv 0 \mod 0$ when $\C$ is reduced. It then follows that $\CI$ can be lifted to a type D structure over $\mc{R}$, $\tI{\C}$. The elements of $\tCI$ are $\mc{R} \otimes X$, and the differential is given by
$$\delta^1_{\tCI} : = \p + Q \omega : X \to \mc{R} \otimes_{\F} X.$$
The differential $\delta^1_{\tCI}$ squares to zero, as
$$(\delta^1_{\tCI})^2 x = \delta^1_{\tCI}(\p x + Q \omega x) = \p^2 x + Q \omega \p x + \p Q \omega x  + Q^2 \omega^2 x = Q [ \p, \omega] x + Q^2 \omega^2 = 0.$$
Note that $\omega^2 \equiv 0$ modulo $U$ when $\C$ is reduced, and that $[\p, \omega]$ is always 0 mod $U$ for any almost $\iota$-complex.
\begin{define} \label{def:r_representative_of_c}
	The \textit{$\mc{R}$ representative of $\C$} is $\tCI$ for any reduced almost $\iota$-complex $\C$. 
\end{define}
Note also that any type D structure $M$ over $\mc{R}$ can be made into a type D structure over $\mc{R}/(Q^2)$ by setting $Q^2$ to zero.  
\begin{define}
	The $Q^2$-\textit{reduction} of a type D structure over $\mc{R}$ is the type D structure over $\mc{R}/(Q^2)$ obtained by setting $Q^2$ to zero. 
\end{define}
The following proposition follows immediately from the definitions of $\tCI$ and $\CI$:
\begin{prop}
	The $Q^2$-reduction of $\tCI$ is $\CI$. 
\end{prop}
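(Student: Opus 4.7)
The plan is to unpack the two constructions and observe that they manifestly coincide. Both $\tCI$ and $\CI$ are built from the same $\F$-vector space $X$ of generators, but tensored with different coefficient rings: $\tCI = \mc{R} \otimes_\F X$ while $\CI = (\mc{R}/(Q^2)) \otimes_\F X$. The $Q^2$-reduction is defined as the operation $M \mapsto M \otimes_{\mc{R}} \mc{R}/(Q^2)$, so the first step is to check the underlying modules match: since tensor product is associative,
\[
\tCI \otimes_{\mc{R}} \mc{R}/(Q^2) \;=\; (\mc{R} \otimes_\F X) \otimes_{\mc{R}} \mc{R}/(Q^2) \;\cong\; \mc{R}/(Q^2) \otimes_\F X \;=\; \CI.
\]

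The second step is to verify that the differentials agree under this identification. Both differentials are given on generators by the identical formula $\p + Q\omega$, so after quotienting by $Q^2$ the induced map on $\tCI/(Q^2)$ sends $x \mapsto \p x + Q\omega x$, exactly $\delta^1_{\CI}$. There is one small point to notice: the expression $\p + Q\omega$ in $\tCI$ is a well-defined differential only because $\C$ is reduced, so $\omega^2 \equiv 0 \bmod U$ and the cross term $Q^2 \omega^2$ that appears when squaring lies in $\im Q^2 \subset \mc{R}$. After $Q^2$-reduction this term vanishes on the nose, which is consistent with $\delta^1_{\CI}$ squaring to zero only modulo $Q^2$.

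There is essentially no obstacle: the proposition is a bookkeeping statement that identifies two constructions built from the same differential by the same formula over different ground rings. The only thing to be careful about is making the quotient isomorphism $\mc{R}/(Q^2) \otimes_\F X \cong (\mc{R} \otimes_\F X)/Q^2$ explicit and verifying that the differential descends through it, which is immediate from the $\mc{R}$-linearity of $\delta^1_{\tCI}$.
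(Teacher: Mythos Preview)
Your proof is correct and matches the paper's approach: the paper simply states that this proposition ``follows immediately from the definitions of $\tCI$ and $\CI$'' and gives no further argument. You have just unpacked that immediate verification explicitly.
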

Up to local equivalence, it can be assumed that any almost $\iota$-complex has a lift to a type D structure over $\mc{R}$. 
\begin{cor} \label{prop:almost_iota_complexes_as_q2_reduction}
	Every almost $\iota$-complex is locally equivalent to the $Q^2$-reduction of a type D structure over $\mc{R}$.
\end{cor}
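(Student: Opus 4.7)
The plan is to chain together the results already established in the excerpt: the reduction lemma, the vanishing of $\omega^2 \bmod U$ for reduced complexes, the explicit construction of $\tCI$, and the preceding proposition identifying $\CI$ as the $Q^2$-reduction of $\tCI$.

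First, given an arbitrary almost $\iota$-complex $\C$, I would invoke Lemma~\ref{lem:almost_iota_locally_equivalent_to_reduced} to replace $\C$ by a locally equivalent reduced almost $\iota$-complex $\C'=(C',\p',\biota')$. Since local equivalence is transitive, it suffices to produce a type $D$ structure over $\mc{R}$ whose $Q^2$-reduction is locally equivalent to $\C'$.

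Second, since $\C'$ is reduced, Lemma~\ref{lem:reduced_has_omega_squard_equiv_0_mod_U} gives that $\omega^2 \equiv 0 \bmod U$, where $\omega = 1+\biota'$. This is exactly the algebraic input required by Definition~\ref{def:r_representative_of_c} to form the $\mc{R}$-representative $\tI{\C'}$: the calculation $(\delta^1_{\tCI})^2 = Q[\p,\omega] + Q^2 \omega^2$ in the excerpt relies only on $[\p,\omega]\equiv 0 \bmod U$ and $\omega^2 \equiv 0 \bmod U$, both of which hold for a reduced almost $\iota$-complex. So $\tI{\C'}$ is a well-defined type $D$ structure over $\mc{R}$.

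Third, by the proposition immediately preceding the corollary, the $Q^2$-reduction of $\tI{\C'}$ is precisely $\CI'$, the $\mc{R}/(Q^2)$-representative of $\C'$ defined in Definition~\ref{def:r_mod_q_2_representative_of_c}. This $\CI'$ is nothing but a repackaging of the data $(C',\p',\biota')$ into a single differential $\delta^1 = \p' + Q\omega$ on $\mc{R}/(Q^2) \otimes_{\F} X$; the almost $\iota$-complex $\C'$ can be recovered tautologically by reading off the $1$- and $Q$-components of $\delta^1$. Thus $\C'$ and the $Q^2$-reduction of $\tI{\C'}$ represent the same object, and in particular lie in the same local equivalence class.

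The argument is essentially bookkeeping rather than real mathematical work; the only point that requires care is confirming that the bijective correspondence between reduced almost $\iota$-complexes $\C'$ and type $D$ structures $\CI'$ of the specified form intertwines the two notions of local equivalence. This amounts to checking that a grading-preserving, $U$-equivariant chain map $f:C'\to D'$ which satisfies $f\biota'\simeq\biota'' f \bmod U$ corresponds exactly to a type $D$ morphism between the associated $\mc{R}/(Q^2)$-structures, which follows by writing $f$ and any homotopy as their $1$- and $Q$-components and matching them to a morphism of complexes over $\mc{R}/(Q^2)$ together with a $Q$-chain-homotopy. This observation, combined with the three steps above, completes the proof.
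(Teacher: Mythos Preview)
Your proof is correct and follows essentially the same route as the paper: reduce to a reduced almost $\iota$-complex via Lemma~\ref{lem:almost_iota_locally_equivalent_to_reduced}, form $\tI{\C'}$, and observe that its $Q^2$-reduction recovers $\C'$. Your final paragraph is unnecessary, since the $Q^2$-reduction of $\tI{\C'}$ is literally $\C'$ itself (not merely something locally equivalent to it), so no compatibility check between notions of local equivalence is needed.
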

\begin{proof}
	Take an almost $\iota$-complex $\C$. By Lemma~\ref{lem:almost_iota_locally_equivalent_to_reduced}, $\C$ is locally equivalent to some reduced almost $\iota$-complex $\C_{r}$. Then the $Q^2$-reduction of $\tI{\C_{r}}$ is locally equivalent to $\C$. 
\end{proof}
The advantage of working over $\mc{R}$ is that any type D structure over $\mc{R}$ can be represented as a precurve on the twice punctured disk. Recall the functors $\mc{F} : \Cx \Mat(\A) \to \Cx \Mat_i(\bar{\A})$ and $\mc{G} : \Cx \Mat_i(\bar{\A}) \to \Cx \Mat (\A)$ used in the proof of Lemma~\ref{lem:mat_a_and_mat_bar_a_are_equivalent} and Corollary~\ref{cor:cxmata_and_cxmatibara_are_equivalent}.
\begin{define} \label{def:precurve_representative_of_c}
	The \textit{precurve representative of $\C$} is $\mc{F}(\tCI)$ for any reduced almost $\iota$-complex $\C$.
\end{define}

It will be useful to define the $Q^2$-reduction of a precurve. 

\begin{define}
	The $Q^2$-\textit{reduction} of a precurve $M$ on the twice punctured disk is the $Q^2$-reduction of $\mc{G}(M)$.
\end{define}

It is evident that the $Q^2$-reduction of $M(\tCI)$ is $\C$ for any reduced almost $\iota$-complex $\C$. 

\begin{prop} \label{prop:reduced_homotopy_gives_precurve_chain_iso}
	If $\C_1$ and $\C_2$ are two reduced almost $\iota$-complexes and $\C_1$ is homotopic to $\C_2$ then $\mc{F}(\tI{\C_1})$ is chain isomorphic to $\mc{F}(\tI{\C_2})$.
\end{prop}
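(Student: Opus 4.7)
The plan is to promote a homotopy equivalence between reduced almost $\iota$-complexes to a chain equivalence between their $\mc{R}$-lifts, then push it through the categorical equivalence $\mc{F}$ from Corollary~\ref{cor:cxmata_and_cxmatibara_are_equivalent}.

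First I would unpack the hypothesis. A homotopy of reduced almost $\iota$-complexes provides almost $\iota$-morphisms $f:\C_1\to\C_2$ and $g:\C_2\to\C_1$ together with homotopies realising $gf\simeq\id$, $fg\simeq\id$, $f\biota_1\simeq\biota_2 f$, and $g\biota_2\simeq \biota_1 g$, all mod $U$. In particular, by Lemma~\ref{lem:reduced_has_w_f_commute_mod_U}, since $\C_1,\C_2$ are reduced we have $f\omega_1+\omega_2 f\in\im U$ and $g\omega_2+\omega_1 g\in\im U$ on the nose, witnessed by some homotopies $K_f$ and $K_g$.

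The central step is to lift $f$ and $g$ to $\mc{R}$-type D morphisms. I would define
\[
\tilde f := f + Q K_f \colon \tI{\C_1}\longrightarrow\tI{\C_2},
\qquad
\tilde g := g + Q K_g,
\]
and similarly lift the homotopies $gf\simeq \id$ and $fg\simeq \id$. To check that $\tilde f$ is a type D chain map, I would compute
\[
\tilde f\,\delta^1_{\tI{\C_1}} + \delta^1_{\tI{\C_2}}\,\tilde f
= (f\p_1+\p_2 f) + Q\bigl(f\omega_1+\omega_2 f+[K_f,\p]\bigr) + Q^2\bigl(K_f\omega_1+\omega_2 K_f\bigr).
\]
The first term vanishes because $f$ is a chain map. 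The second term lies in $Q\cdot\im U$, which is zero in $\mc{R}=\F[U,Q]/(UQ)$. For the third term, I would exploit the reduced hypothesis again: since $\p_i\equiv 0\mod U$, the homotopy $K_f$ can be chosen so that $K_f\omega_1+\omega_2 K_f\in\im U$ as well, making the $Q^2$-term land in $Q^2\cdot\im U=0$. The analogous verifications hold for $\tilde g$ and for the lifts of the homotopies $\id\simeq gf$, $\id\simeq fg$, showing that $\tilde f$ and $\tilde g$ are mutually inverse up to $\mc{R}$-type D homotopy.

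The final step is purely formal: the functor $\mc{F}\colon\Cx\Mat(\mc{R})\to\Cx\Mat_i(\bar{\mc{R}})$ of Corollary~\ref{cor:cxmata_and_cxmatibara_are_equivalent} is an equivalence of differential categories, hence sends chain homotopy equivalences to chain homotopy equivalences. Applying $\mc{F}$ to $\tilde f$ and $\tilde g$ therefore produces mutually inverse chain maps $\mc{F}(\tI{\C_1})\rightleftarrows\mc{F}(\tI{\C_2})$ in the precurve category, proving the claim. The main obstacle is the algebraic bookkeeping in the $Q^2$-term above: it is precisely the combination of the reduced hypothesis (which forces the mod-$U$ vanishings needed) together with the relation $UQ=0$ in $\mc{R}$ that makes the naive two-term lift $\tilde f=f+QK_f$ actually close up into an honest $\mc{R}$-type D chain map without requiring higher-order corrections.
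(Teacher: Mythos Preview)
Your argument has a genuine gap at the conclusion, and it also takes an unnecessarily complicated route compared with the paper.

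The paper's proof does not introduce any $QK_f$ correction. By Lemma~\ref{lem:reduced_has_w_f_commute_mod_U}, for reduced complexes one has $[\omega,f]\in\im U$ literally (no homotopy involved), so $Q[\omega,f]=0$ in $\mc{R}$ because $UQ=0$. Hence the naive extension $\tilde f:=f\otimes\id_{\mc R}$ already satisfies $[\delta^1,\tilde f]=[\p,f]+Q[\omega,f]=0$ and is a chain map of $\mc R$-complexes; there is nothing to correct and no $Q^2$-term to worry about. Your introduction of $K_f$ (and the unjustified claim that it can be chosen with $K_f\omega_1+\omega_2 K_f\in\im U$) is superfluous.

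More importantly, the paper does not stop at a homotopy equivalence. Its key step is to observe that $f$ and $g$, being homotopy equivalences of $\F[U]$-chain complexes, induce isomorphisms on homology, and that for finitely generated free reduced complexes over the PID $\F[U]$ this forces $f$ and $g$ to be chain \emph{isomorphisms} already over $\F[U]$. Consequently $fg=\id$ and $gf=\id$ on the nose, so $\tilde f\tilde g=\id$ and $\tilde g\tilde f=\id$ over $\mc R$, and $\mc F(\tilde f)$ is a genuine chain isomorphism of precurves. Your argument, by contrast, only produces $\tilde f,\tilde g$ that are mutually inverse \emph{up to homotopy}; pushing this through $\mc F$ yields a chain homotopy equivalence, not the chain isomorphism the proposition asserts. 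You slide from ``mutually inverse up to $\mc R$-type~D homotopy'' to ``mutually inverse chain maps'' in your last sentence without justification, and that is precisely the missing step.
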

\begin{proof}
	Suppose $f: \C_1 \to \C_2$ and $g: \C_2 \to \C_1$ witness the homotopy equivalence of $\C_1$ and $\C_2$ as almost $\iota$-complexes. Here $f$ and $g$ will be considered as $\F[U]$-module homomorphisms. Then $f$ and $g$ are almost $\iota$-morphisms between reduced complexes, so $[\omega, f]$ and $[\omega, g]$ are equivalent to $0 \mod U$ by Lemma~\ref{lem:reduced_has_w_f_commute_mod_U}. Furthermore, since $f$ and $g$ are homotopy equivalences of $\F[U]$ chain complexes, $f_*:H_*(\C_1) \to H_*(\C_2)$ and $g_*:H_*(\C_2) \to H_*(\C_1)$ must be isomorphisms. However, the homology of an $\F[U]$ chain complex completely determines its chain isomorphism type, so $f$ and $g$ must be chain isomorphisms.
	
	Now let the lifts of $f$ and $g$ be $\tilde{f} := f \otimes \id_{\mc{R}}$ and $\tilde{g} := g \otimes \id_{\mc{R}}$. Then 
	$$[\delta_1, \tilde{f}] = [\p + Q \omega, \tilde{f}] = [\p, \tilde{f}] + Q [\omega, \tilde{f}] = 0$$
	and
	$$ [\delta_1, \tilde{g}] = [\p + Q \omega, \tilde{g}] = [\p, \tilde{g}] + Q [\omega, \tilde{g}] = 0$$
	since $\tilde{f}$ and $\tilde{g}$ are chain maps with respect to $\p$, and both $[\omega, \tilde{f}]$ and $[\omega, \tilde{g}]$ are 0 mod $U$. Now to show that $\tilde{f}$ and $\tilde{g}$ are chain isomorphisms. Since $f$ and $g$ are chain isomorphisms, $\id_{\F[U]} + fg = 0$ and $\id_{\F[U]} + gf = 0$. Thus 
	$$\id_{\mc{R}} + \tilde{f} \tilde{g} = \id_{\F[U]} \otimes \id_{\mc{R}} + (f \otimes \id_{\mc{R}})(g \otimes \id_{\mc{R}}) = 
	(\id_{F[U]} + fg) \otimes \id_{\mc{R}} = 0$$
	and
	$$\id_{\mc{R}} + \tilde{g} \tilde{f} = \id_{\F[U]} \otimes \id_{\mc{R}} + (g \otimes \id_{\mc{R}})(f \otimes \id_{\mc{R}}) = 
	(\id_{F[U]} + gf) \otimes \id_{\mc{R}} = 0$$
	so  $\tilde{f}$ and $\tilde{g}$ are chain isomorphisms as well. Applying $\mc{F}$ gives the chain isomorphisms $\mc{F}(\tilde{f}):\mc{F}(\tI{\C_1}) \to \mc{F}(\tI{\C_2})$ and $\mc{F}(\tilde{g}): \mc{F}(\tI{\C_2}) \to \mc{F}(\tI{\C_1})$ between $\mc{F}(\tI{\C_1})$ and $\mc{F}(\tI{\C_2})$. 
\end{proof}

However, if $\C_1$ and $\C_2$ are two locally equivalent reduced almost $\iota$-complexes, then it is not true that $\mc{F}(\tI{\C_1})$ is chain isomorphic, or even homotopic, to $\mc{F}(\tI{\C_2})$. See example~\ref{example:local_equiv_counterexample}.

\begin{example} \label{example:local_equiv_counterexample}
	 In this example $\C$ and $\C'$ are locally equivalent, but the extra summand of $\C'$ adds a second immersed curve. These cannot be homotopic, since $H_*(\tI{\C_1}) \not \cong H_*(\tI{\C_2})$. See figure~\ref{fig:local_equivalence_counter_example} below.
	\begin{figure}[ht!]
		\begin{subfigure}{0.45\textwidth}
			\begin{center}
				\begin{tikzcd}
				\bullet & \bullet \arrow[r, "2"] \arrow[l, red] & \bullet
				\end{tikzcd}
			\end{center}
			\def \svgwidth{\textwidth}
			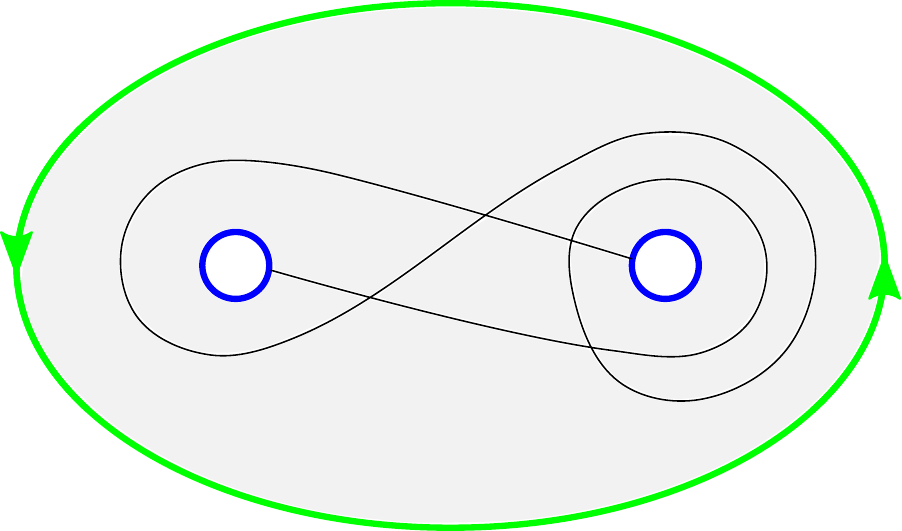
			\caption{$\C_1 = \C(+, -2)$ as an immersed curve.}
			\label{fig:local_equiv_counter_example_a}
		\end{subfigure}
		\hfill
		\begin{subfigure}{0.45\textwidth}
			\begin{center}
				\begin{tikzcd}
				\bullet & \bullet \arrow[r, "2"] \arrow[l, red] & \bullet & \bullet \arrow[r, "1"] & \bullet
				\end{tikzcd}
			\end{center}
			\def \svgwidth{\textwidth}
			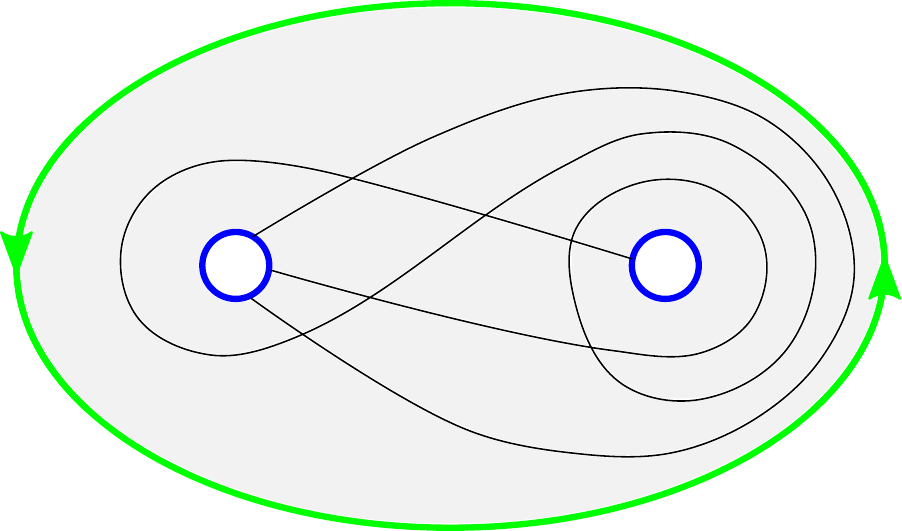
			\caption{$\C_2 = \C(+, -2) \oplus (\bullet \overset{U}{\to} \bullet)$ as an immersed curve.}
			\label{fig:local_equiv_counter_example_b}
		\end{subfigure}
		\caption{$\C_1$ is locally equivalent to $\C_1$, but $\mc{F}(\tI{\C_1})$ is not homotopic to $\mc{F}(\tI{\C_2})$.}
		\label{fig:local_equivalence_counter_example}
	\end{figure}
\end{example}

In sections~\ref{subsection_primitive_representatives} and \ref{subsection_classification}, it will be shown how to extract the local equivalence class of a reduced almost $\iota$-complex $\C$ from $\mc{F}(\tCI)$. Before doing so, it will be necessary to reduce $\mc{F}(\tCI)$ to a decorated immersed curve on the twice punctured disk, which is possible by Theorem~\ref{thm:class_decorated_immersed}. 

\begin{define} \label{def:multicurve_representative}
	The \textit{multicurve representative of $\C$} is the multicurve $\gamma(\tCI) = \{ (\gamma_i, X_i) \}$ which is chain isomorphic to $\mc{F}(\tCI)$ for any almost $\iota$-complex $\C$. 
\end{define}

We conclude section~\ref{subsection_almost_iota_complexes_as_immersed_curves} with the following lemma:

\begin{lem} \label{lem:one_q_loop}
	For a reduced almost $\iota$-complex $\C$, any differential component of $\gamma(\tCI)$ which passes around the $Q$-puncture does so exactly once.  
\end{lem}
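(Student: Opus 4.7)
The plan is to maintain an inductive invariant along the simplification process from $\mc{F}(\tCI)$ down to the multicurve $\gamma(\tCI)$: every differential component whose algebra coefficient is a positive power of $Q$ in fact equals $Q$ itself, never $Q^n$ for $n\ge 2$. Since a differential component wrapping $k$ times around the $Q$-puncture is labelled by $Q^k$ in the face algebra $\bar{\A}_{f_Q}=\F[Q]$, this invariant applied to $\gamma(\tCI)$ yields the lemma.

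For the base case, I would inspect $\mc{F}(\tCI)$ directly. By construction $\delta^1_{\tCI}=\p+Q\omega$, where $\p$ takes values in $U\cdot\F[U]$ (since $\C$ is reduced, $\p\equiv 0 \bmod U$) and $Q\omega$ takes values in $Q\cdot\F$, carrying a single factor of $Q$. Thus every differential component of $\tCI$ has algebra coefficient $U^k$ for some $k\ge 1$ or exactly $Q$. The functor $\mc{F}$ only re-partitions generators across the idempotents $\iota_{s_1(a)},\iota_{s_2(a)}$ and presents the differential as a matrix, so it preserves algebra coefficients; hence $\mc{F}(\tCI)$ satisfies the invariant.

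For the inductive step I would go through each simplification move of Construction~\ref{construction:drawing_reduced_precurves} and the arrow-sliding algorithm. The only moves which could in principle combine two $Q$-arrows into a $Q^n$-arrow with $n\ge 2$ are those whose clean-up-lemma homotopies carry a nontrivial power of $U_f$, namely S1a, S2a, M3a, and M3b. Each of these is governed by a strict inequality ($n<m$ or $m<n$) between the $U_f$-lengths of two arrows on the same face $f$. By the inductive hypothesis, any two arrows on the $Q$-face have length $1$, so the required strict inequality fails and the move is never triggered on the $Q$-face. All other moves either leave the differential algebraically unchanged (T1, T2, T3, M1) or invoke homotopies $h$ that carry no algebra coefficient (S1b, S2b, M2, M3c); in the latter case every new arrow produced by $D(h)=dh+hd$ inherits its coefficient from some preexisting arrow of $d$, so no new $Q$-powers arise.

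The main obstacle I anticipate is verifying the crossover-arrow slide M2, whose witnessing chain isomorphism $1+h_1+h_2$ straddles an arc and could conceivably introduce a $Q^2$ by composing a new $Q$-arrow with an existing one. The proof of Lemma~\ref{lem:m2_iso} factors M2 through the generator-level homotopies associated with S1b/S2b, both of which are coefficient-free, so no $Q$-multiplication occurs. Once this final case is settled, the invariant survives the entire simplification, and applying it to $\gamma(\tCI)$ gives the lemma.
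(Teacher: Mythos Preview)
Your proposal is correct and follows essentially the same approach as the paper: establish that only $Q^1$ appears in $\mc{F}(\tCI)$ and then check that none of the simplification moves can increase the $Q$-exponent. The paper's own proof simply asserts that ``these moves do not alter the number of times a segment \ldots\ wraps around the $Q$-puncture, unless it removes the segment entirely,'' whereas you supply the case-by-case justification---in particular the observation that S1a, S2a, M3a, M3b require a strict length inequality that cannot hold on the $Q$-face---so your argument is a more explicit version of the same idea.
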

\begin{proof}
	In $\mc{F}(\tCI)$, the only power of $Q$ appearing in $\delta^1$ is 1. However, $\mc{F}(\tCI)$ is not necessarily a decorated immersed curve. Consider the moves S1-3, M1-3 and T1-3 shown in figures~\ref{fig:precurve_simplification_arcs} and \ref{fig:precurve_simplification_p_a}, which simplify a precurve to a set of decorated immersed curves. These moves do not alter the number of times a segment of $\mc{F}(\tCI)$ wraps around the $Q$-puncture, unless it removes the segment entirely. Thus any segment of $\gamma(\tCI)$ that passes around the $Q$-puncture does so exactly once.
\end{proof}

\subsection{Quotienting to complexes over $\F[U]$ and $\F[Q]$} \label{subsection:constructing_complexes_over_fu_and_fq}
Recall that for any precurve $M$, $\mc{G}(M)$ is a type D structure over $\mc{R}$ with generators $X$ and
$$\delta^1 = \mc{G}(d): X \to \mc{R} \otimes X.$$
This defines a chain complex $C(M; \mc{R})$ over $\mc{R}$ which is equal to $\mc{G}(M)$.
By quotienting $U$ and $Q$, new type D structures over $\F[U]$ and $\F[Q]$ with the same generators can be defined in the natural way. Denote these new type D structures as 
$$(X, \delta^1/(Q) := \delta^1 \otimes \id_{\F[U,Q]/(Q)})$$
and 
$$(X, \delta^1/(U) :=\delta^1 \otimes \id_{\F[U,Q]/(U)}).$$
These define chain complexes $C(M; \F[U])$ and $C(M; \F[Q])$ over $\F[U]$ and $\F[Q]$ corresponding to $M$.
In this context, the grading of the above type D structures is ignored. 

While $H_*(M; \mc{R})$ is difficult to understand, $H_*(M; \F[U])$ and $H_*(M; \F[Q])$ are more manageable. 

In this section and the next, it will be useful to have a simplified notation for generators on a precurve and generators in the $\mc{R}$ complex corresponding to that precurve. 
Let the generators $C.\iota_{s_1(a)}$ be denoted $\{x_1^1, \cdots, x_n^1\}$ and their counterparts in $C.\iota_{s_2(a)}$ be denoted $\{x_1^2, \cdots, x_n^2\}$. The convention will be that $\mc{G}(x^1_i) = x_i$ and $\mc{G}(x^2_i) = 0$. The action of $\mc{G}$ on morphisms is given in equation~\ref{equation:mcG_on_morphisms}.
\begin{lem} \label{lem:rank_of_homology_over_fu_and_fq_from_counting_puncture_connections}
	For any reduced precurve on the twice punctured disk,
	$$\rk H_*(M; \F[U]) = \{ \text{number of generators connecting to the $U$-puncture}\}.$$
	and
	$$\rk H_*(M; \F[Q]) = \{ \text{number of generators connecting to the $Q$-puncture}\}.$$
\end{lem}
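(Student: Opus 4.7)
The plan is to unpack $\mc{G}$ modulo $Q$, reduce to a multicurve, and read the ranks off the matching structure. I sketch the $\F[U]$ case; the $\F[Q]$ case is symmetric with $s_1(a)$ in place of $s_2(a)$ and no $P_a$-conjugation required.

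First, I would identify the chain complex $C(M;\F[U])=\mc{G}(M)\otimes_{\mc{R}}\F[U]$ explicitly. Setting $Q=0$ in equation~\ref{equation:mcG_on_morphisms} and using that $U\in\iota_{s_2(a)}\bar{\A}\iota_{s_2(a)}$ and that $\bar{\A}$ has no mixed $s_1$--$s_2$ arrows, all five terms of the formula for $\mc{G}(d)$ vanish except the last, and the $\F[U]$-differential is $P_a^{-1}\circ(\iota_{s_2(a)}.d^+.\iota_{s_2(a)})\circ P_a$. Hence $H_*(M;\F[U])$ is isomorphic, via $P_a\otimes\id_{\F[U]}$, to the homology of the chain complex whose generators are the $s_2(a)$-generators of $M$ and whose differential $d_U$ consists of exactly the $U$-arrows of $M$.

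Second, I would replace $M$ by its multicurve representative, using proposition~\ref{prop:reduced_chain_iso_to_simply_faced} and theorem~\ref{thm:class_decorated_immersed}. In multicurve form, each $s_2(a)$-generator has at most one incident $U$-arrow, since each crossing of the curve with $a$ has exactly one side in the $U$-face, and the crossings and crossover arrows of the decoration live in $N(a)$ rather than in the $U$-face. Thus $d_U$ decomposes as a disjoint union of matched pairs $v\xrightarrow{U^k}w$ together with isolated vertices.

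Finally, each matched pair contributes an $\F[U]/(U^k)$-torsion summand with no free part, while each isolated vertex contributes one free $\F[U]$-summand; hence the free rank of $H_*(M;\F[U])$ equals the number of isolated vertices in $d_U$. By construction~\ref{construction:drawing_reduced_precurves} an $s_2(a)$-generator is isolated exactly when it is drawn connected to the $U$-puncture, completing the count. The main obstacle is the $P_a$-conjugation bookkeeping in the first step, but this is harmless since $P_a\otimes\id_{\F[U]}$ is a chain isomorphism and therefore preserves free rank.
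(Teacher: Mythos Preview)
There are two issues. First, a labelling slip: in the paper's convention $s_1(a)$ borders the $U$-face and $s_2(a)$ the $Q$-face (see figure~\ref{fig:twice_punctured_disk_arc_system} and the opening sentence of the paper's proof). Hence setting $Q=0$ kills the $s_2$-terms in equation~\eqref{equation:mcG_on_morphisms} and leaves $\delta^1/(Q)=\iota_{s_1(a)}.d.\iota_{s_1(a)}$ with \emph{no} $P_a$-conjugation; it is $\delta^1/(U)$ that carries the conjugation $P_a^{-1}\circ(\iota_{s_2(a)}.d.\iota_{s_2(a)})\circ P_a$. You have these reversed throughout, so your ``$\F[Q]$ case is symmetric with no $P_a$-conjugation'' is exactly backwards.

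Second, and more substantively, your step~2 changes the object under discussion. The lemma is about a fixed reduced precurve $M$ and is invoked in the paper on precurves that are \emph{not} multicurves, for instance $\mc{F}(\tCI)$ in lemma~\ref{lem:u_puncture_gives_nontorsion_homology} and proposition~\ref{prop:primitive_from_reduced}. Passing to the multicurve representative certainly preserves the homology rank, but it does not in general preserve the number of puncture-connected generators: the simplification moves S1 and S2 of figure~\ref{fig:precurve_simplification_arcs} delete differential arcs, so a generator that carries an arc in $M$ may carry none after simplification and thereby become newly puncture-connected. Your matching argument therefore establishes the equality only for the multicurve representative, not for $M$ itself, and you give no reason why the two counts should agree. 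The paper's proof avoids this entirely by arguing directly on $M$: once $\delta^1/(Q)$ is identified with the $s_1(a)$-part of $d$, it checks for each basis generator $x_i$ whether $x_i\in\ker(\delta^1/(Q))$ and whether some $U^k x_i$ lies in $\im(\delta^1/(Q))$, reading both conditions straight off the arcs incident to $x_i^1$ in the given precurve, without ever replacing $M$ by a representative. That direct reading is the step your outline is missing.
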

\begin{proof}
	Let the precurve $M$ be $(C, \{P_a\}, d)$. 
	In homology over $\F[U]$ this is slightly more straightforward to show since $s_1(a)$ is taken to be on the side of the $U$-puncture and $\mc{G}$ projects onto $C.\iota_{s_1(a)}$. Following the convention of the previous section, let $C.\iota_{s_1(a)}= \{ x_1^1, \cdots, x_n^1\}$, $C.\iota_{s_2(a)} = \{ x_1^2, \cdots, x_n^2\}$, $\mc{G}(x_i^1) = x_i$ and $\mc{G}(x_i^2) = 0$. Then by equation~\ref{equation:mcG_on_morphisms}
	\begin{align*}
	\delta^1 = \mc{G}(d) & =  (\iota_{s_1(b)}.d.\iota_{s_1(a)}) + (P'_b)^{-1} \circ (\iota_{s_2(b)}.d.\iota_{s_1(a)}) \\
	& + (\iota_{s_1(b)}.d.\iota_{s_2(a)})  \circ (P_a) + (P'_b)^{-1} \circ (\iota_{s_2(b)}.d. \iota_{s_2(a)})\circ P_a.
	\end{align*}
	Since $M$ is reduced, $d^\times = 0$ and $d^+ = d$. On the twice punctured disk, there are no algebra elements between $s_1(a)$ and $s_2(a)$ and there is only one arc $a$, so
	\begin{align*}
	\delta^1 & =  (\iota_{s_1(a)}.d.\iota_{s_1(a)}) + (P'_a)^{-1} \circ (\iota_{s_2(a)}.d. \iota_{s_2(a)})\circ P_a.
	\end{align*}
	Note that $\iota_{s_2(a)}.d.\iota_{s_2(a)}$ will only produce powers of $Q$ in the differential, so 
	$$\delta^1/(Q) = \iota_{s_1(a)}.d.\iota_{s_1(a)}.$$ 
	Similarly, $\iota_{s_1(a)}.d.\iota_{s_1(a)}$ will only produce powers of $U$ in the differential, so
	$$\delta^1/(U) = P_a^{-1} \circ ( \iota_{s_2(a)}.d.\iota_{s_2(a)}) \circ  P_a.$$ 
	Thus if there is a component connecting $x^1_i$ to the $U$-puncture, then $x_i \in \ker (\delta^1/(Q))$ and $U^k x_i \not \in \im (\delta^1/(Q))$ for any $k$. If $x_i$ connects to another $x_j$ by an arc, then either $x_i \not \in \ker (\delta^1/(Q))$ or there is some power of $k$ for which $U^k x_i \in \im (\delta^1/(Q))$. It then follows that
	$$U^{-1} H_*(M; \F[U]) = \F[U, U^{-1}] \langle \{[x_i] : x_i^1 \text{ connects to the $U$-puncture}\} \rangle,$$
	and then that
	$$\rk H_*(M; \F[U]) = \{ \text{number of generators connecting to the $U$-puncture}\}.$$
	Now it remains to do the same for homology over $\F[Q]$. Recall $\delta^1/(U) = P_a^{-1} \circ (\iota_{s_2(a)}. d .\iota_{s_2(a)}) \circ P_a$. Thus $x_i \in \ker (\delta^1/(Q))$ if $d P_a(x_i^1) = 0$, and $Q^k x_i \not \in \im (\delta^1/(Q))$ if there $P_a(x_i^1) \not \in d(C.\iota_{s_2(a)})$. The only elements of $C.\iota_{s_2(a)}$ that are in the kernel of $d$ and have no power of $Q$ in the image of $d$ are exactly the generators on $s_2(a)$ connecting to the $Q$-puncture. Thus
	$$Q^{-1} H_*(M; \F[Q]) = \F[Q, Q^{-1}] \langle [\mc{G} P_a^{-1} x_i^2] : x_i^2 \text{ connects to the $Q$-puncture} \} \rangle,$$
	and
	$$\rk H_*(M; \F[Q]) = \{ \text{number of generators connecting to the $Q$-puncture}\}$$
	follows, completing the proof. 
\end{proof}

This last lemma takes chain isomorphisms between $C(M; \mc{R}) \to C(M'; \mc{R})$ to chain isomorphisms $C(M; \F[U]) \to C(M'; \F[U])$ and $C(M; \F[Q]) \to C(M; \F[Q])$, and it will be useful in determining where a precurve connects to the $U$- or $Q$-punctures.

\begin{lem} \label{lem:r_chain_iso_descent}
	Suppose there is a chain isomorphism $\Phi : C(M; \mc{R}) \to C(M'; \mc{R})$. Then $\Phi$ descends to chain isomorphisms
	$$\Phi_U : C(M; \F[U]) \to C(M'; \F[U]) \qquad \text{ and } \qquad \Phi_Q: C(M; \F[Q]) \to C(M'; \F[Q]).$$
	where $\Phi_U = \Phi \otimes \id_{\F[U,Q]/(Q)}$ and $\Phi_Q = \Phi \otimes \id_{\F[U,Q]/(U)}$. 
\end{lem}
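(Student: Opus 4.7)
The plan is to show that the lemma follows from the functoriality of tensor product along the quotient maps $\mc{R} \to \F[U,Q]/(Q) \cong \F[U]$ and $\mc{R} \to \F[U,Q]/(U) \cong \F[Q]$. Concretely, $C(M; \F[U]) = C(M;\mc{R}) \otimes_{\mc{R}} \F[U,Q]/(Q)$ as an $\F[U]$-module with differential $\delta^1/(Q) = \delta^1 \otimes \id$, and similarly for $C(M; \F[Q])$. Since $\Phi$ is a chain map of $\mc{R}$-modules, I would define $\Phi_U := \Phi \otimes \id_{\F[U,Q]/(Q)}$ and $\Phi_Q := \Phi \otimes \id_{\F[U,Q]/(U)}$ exactly as in the statement.

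First I would verify the chain map property. Writing $\Phi$ as a matrix with entries in $\mc{R}$, the identity $\delta^1_{M'} \circ \Phi = \Phi \circ \delta^1_M$ holds in $\mc{R} \otimes X'$. Applying the ring homomorphism $\mc{R} \to \F[U]$ that sends $Q \mapsto 0$ to every matrix entry yields $(\delta^1_{M'}/(Q)) \circ \Phi_U = \Phi_U \circ (\delta^1_M/(Q))$, so $\Phi_U$ is a chain map. The same argument with $U \mapsto 0$ gives the chain map property for $\Phi_Q$.

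Next I would show $\Phi_U$ and $\Phi_Q$ are isomorphisms. Since $\Phi$ is a chain isomorphism, there exists $\Phi^{-1} : C(M';\mc{R}) \to C(M;\mc{R})$ with $\Phi \circ \Phi^{-1} = \id$ and $\Phi^{-1} \circ \Phi = \id$. Applying $- \otimes_{\mc{R}} \F[U,Q]/(Q)$ is a functor on chain complexes of $\mc{R}$-modules, so $(\Phi^{-1}) \otimes \id$ is a two-sided inverse to $\Phi_U$; likewise for $\Phi_Q$. Thus both descended maps are chain isomorphisms of the corresponding complexes over $\F[U]$ and $\F[Q]$.

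There is no substantive obstacle here: the content is essentially that quotienting the coefficient ring commutes with the data of a chain map and preserves isomorphisms. The only thing requiring a bit of care is making sure that the conventions from the previous subsection (namely $\delta^1/(Q) = \delta^1 \otimes \id_{\F[U,Q]/(Q)}$ on the underlying free module on the generators $X$) match the tensor product description above, which they do by definition.
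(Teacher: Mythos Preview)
Your proposal is correct and follows essentially the same approach as the paper: verify the chain map identity survives tensoring with $\id_{\F[U,Q]/(Q)}$ (respectively $\id_{\F[U,Q]/(U)}$), then apply the same to $\Phi^{-1}$ to obtain the inverse. The paper writes this out as an explicit computation while you phrase it as functoriality of the base-change functor, but the content is identical.
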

\begin{proof}
	Since $\Phi$ is a chain map, $\Phi \delta^1_{\C} = \delta^1_{\C'} \Phi$. $\Phi_U$ is a chain map because
	\begin{align*}
	(\Phi \delta^1_\C )\otimes \id_{\F[U,Q]/(Q)} & = (\delta^1_{\C'} \Phi) \otimes \id_{\F[U,Q]/(Q)} \\
	(\Phi \otimes \id_{\F[U,Q]/(Q)}) (\delta^1_{\C} \otimes \id_{\F[U,Q]/(Q)}) & = (\delta^1_{\C'} \otimes \id_{\F[U,Q]/(Q)}) (\Phi \otimes \id_{\F[U,Q]/(Q)}) \\
	\Phi_U \tfrac{\delta^1_{\C}}{(Q)} & = \tfrac{\delta^1_{\C'}}{(Q)} \Phi_U,
	\end{align*}
	and $\Phi_Q$ is a chain map because
	\begin{align*}
	(\Phi \delta^1_\C )\otimes \id_{\F[U,Q]/(U)} & = (\delta^1_{\C'} \Phi) \otimes \id_{\F[U,Q]/(U)} \\
	(\Phi \otimes \id_{\F[U,Q]/(U)}) (\delta^1_{\C} \otimes \id_{\F[U,Q]/(U)}) & = (\delta^1_{\C'} \otimes \id_{\F[U,Q]/(U)}) (\Phi \otimes \id_{\F[U,Q]/(U)}) \\
	\Phi_Q \tfrac{\delta^1_{\C}}{(Q)} & = \tfrac{\delta^1_{\C'}}{(Q)} \Phi_Q.
	\end{align*}
	Applying the same method to $\Phi^{-1}$ shows that $\Phi^{-1}_U$ and $\Phi^{-1}_Q$ are chain maps as well. The result follows from the fact that $\Phi^{-1}_{U/Q} \Phi_{U/Q}$ and $\Phi_{U/Q} \Phi^{-1}_{U/Q}$ compose to the identity.
\end{proof}

\subsection{Primitive representatives} \label{subsection_primitive_representatives}

In this section, it will be shown that any decorated immersed representative of a reduced almost $\iota$-complex $\C$ contains a unique immersed curve $\gamma_0(\tCI)$ with no decoration that begins at the $U$-puncture and ends at the $Q$-puncture. This primitive component $\gamma_0(\tCI)$ determines the local equivalence class of $\C$. 

The following lemma dispels the need to distinguish between $\mc{F}(\tCI)$ and $\gamma(\tCI)$ for reduced complexes. 
\begin{lem} \label{reducing_precurve_preserves_local_equivalence_lem}
	For a reduced almost $\iota$-complex $\C$, the $Q^2$-reduction of $\gamma(\tCI)$ is chain isomorphic to $\C$. 
\end{lem}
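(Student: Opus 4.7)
My plan is to chase the chain isomorphism produced by the arrow-sliding algorithm through the functor $\mc{G}$ and then through the $Q^2$-reduction, showing that each step preserves chain isomorphism. Each of these steps is essentially formal once the right framework is in place.

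First, I would unpack the definitions. By definition~\ref{def:multicurve_representative}, the multicurve $\gamma(\tCI)$ is chain isomorphic to $\mc{F}(\tCI)$ as precurves (this is guaranteed by theorem~\ref{thm:class_decorated_immersed}). Call this isomorphism $\Phi$. By corollary~\ref{cor:cxmata_and_cxmatibara_are_equivalent}, $\mc{F}$ and $\mc{G}$ are mutually inverse equivalences of differential categories, so $\mc{G}(\Phi) : \mc{G}(\gamma(\tCI)) \to \mc{G}(\mc{F}(\tCI))$ is a chain isomorphism in $\Cx(\Mat(\mc{R}))$. But $\mc{G}(\mc{F}(\tCI)) = \tCI$ (up to the natural isomorphism witnessing $\mc{G} \circ \mc{F} = \id$), so $\mc{G}(\gamma(\tCI))$ is chain isomorphic to $\tCI$ as a type D structure over $\mc{R}$.

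Next I would argue that a chain isomorphism between type D structures over $\mc{R}$ descends to a chain isomorphism between their $Q^2$-reductions. This mirrors the argument of lemma~\ref{lem:r_chain_iso_descent}: tensoring $\mc{G}(\Phi)$ with $\id_{\mc{R}/(Q^2)}$ preserves the commuting-square identity $\Phi \circ \delta^1 = \delta^1 \circ \Phi$, and the same holds for $\Phi^{-1}$. Applied here, this yields a chain isomorphism from the $Q^2$-reduction of $\mc{G}(\gamma(\tCI))$, which by definition is the $Q^2$-reduction of $\gamma(\tCI)$, to the $Q^2$-reduction of $\tCI$.

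Finally, by the proposition immediately following definition~\ref{def:r_representative_of_c}, the $Q^2$-reduction of $\tCI$ is $\CI$, and $\CI$ is by construction the $\mc{R}/(Q^2)$-type D structure whose underlying data recovers $\C$ (generators $X$, differential $\p + Q\omega$). Thus the composite of the three chain isomorphisms identifies the $Q^2$-reduction of $\gamma(\tCI)$ with $\C$, completing the proof.

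I do not anticipate a serious obstacle: the result is formal given the machinery of section~\ref{subsection:precurves} and section~\ref{subsection_almost_iota_complexes_as_immersed_curves}, and the only potentially delicate point is checking carefully that the isomorphism produced by the arrow-sliding algorithm is actually a precurve chain isomorphism (which is the content of lemmas~\ref{lem:s1_s_2_iso}, \ref{lem:t1_t2_t3_iso}, \ref{lem:m1_iso}, \ref{lem:m2_iso}, \ref{lem:m3_iso}) and that $\mc{G}$ genuinely takes chain isomorphisms to chain isomorphisms (which follows since $\mc{G}$ is an equivalence).
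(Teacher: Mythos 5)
Your proposal is correct and follows essentially the same route as the paper's proof: both use that the arrow-sliding/simplification moves give a chain isomorphism of $\mc{R}$-complexes between $\mc{F}(\tCI)$ and $\gamma(\tCI)$, and that such an isomorphism descends to the $Q^2$-reductions, which identifies the $Q^2$-reduction of $\gamma(\tCI)$ with $\C$. Your version is slightly more explicit about routing the isomorphism through $\mc{G}$ and about the descent step, but the content is the same.
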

\begin{proof} 
	It is known that the $Q^2$-reduction of $\mc{F}(\tCI)$ is $\C$. Furthermore, it is known that the simplification to a set of decorated immersed curves does not change the chain isomorphism type of $\mc{F}(\tCI)$. Thus it only needs to be shown that the chain isomorphism $f$ given by the moves taking $\mc{F}(\tCI)$ to $\gamma(\CI)$ preserves the local equivalence class of $\C$. However, if $f$ is a chain isomorphism of $\mc{R}$ complexes, then it descends to give a chain isomorphism of $\mc{R}/(Q^2)$ complexes as well. Thus $\CI$ and the $Q^2$ reduction of $\gamma(\tCI)$ are chain isomorphic as $\mc{R}/(Q^2)$ complexes, and it follows that $\C$ is locally equivalent to the $Q^2$-reduction of $\gamma(\tCI)$. 
\end{proof}

This allows $\C$ to be imagined as $\gamma(\tCI)$ for reduced complexes up to chain isomorphism. It now remains to remove excess immersed curves, and reduce to the primitive case. 
\begin{define} \label{def:primitive_representative}
	The \textit{primitive representative $\gamma_0(\tCI)$} is an immersed curve with no decoration that $Q^2$-reduces to an almost $\iota$-complex which is locally equivalent to $\C$, for any almost $\iota$-complex $\C$.
\end{define}
This concludes the definition of various representatives of $\C$, which is summarized in figure~\ref{fig:table_of_representations_of_c}.
\begin{figure}
	\begin{tabular}{|c|c|c|}
		\hline
		[\ldots] representative of an almost $\iota$-complex $\C$ & Notation & See definition [\ldots] \\
		\hline
		$\mc{R}/(Q^2)$ & $\CI$ & \ref{def:r_mod_q_2_representative_of_c} \\
		$\mc{R}$ & $\tCI$ &  \ref{def:r_representative_of_c} \\
		Precurve & $\mc{F}(\tCI)$ & \ref{def:precurve_representative_of_c} \\
		Multicurve & $\gamma(\tCI)$ & \ref{def:multicurve_representative} \\
		Primitive & $\gamma_0(\tCI)$ &  \ref{def:primitive_representative} \\
		\hline
	\end{tabular}
	\caption{The notation for different representations of an almost $\iota$-complex $\C$.}
	\label{fig:table_of_representations_of_c}
\end{figure}

It will be shown in Theorem~\ref{thm:primitive_rep_of_almost_iota_complex} that any almost $\iota$-complex has a primitive representative which $Q^2$-reduces to the standard representative of $\C$. 

At first, it will only be necessary to work with reduced almost $\iota$-complexes. The following lemma is needed to prove that any reduced almost $\iota$-complex has a primitive representative.

\begin{lem} \label{lem:u_puncture_gives_nontorsion_homology}
	For any reduced almost $\iota$-complex $\C$, there is a unique generator in $\mc{F}(\tCI)$ connecting to the $U$-puncture.
\end{lem}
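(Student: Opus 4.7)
The plan is to reduce the statement to a count of $U$-towers in homology and then apply the defining property of an almost $\iota$-complex. By Lemma~\ref{lem:rank_of_homology_over_fu_and_fq_from_counting_puncture_connections}, the number of generators of $\mc{F}(\tCI)$ connecting to the $U$-puncture equals $\rk H_*(\mc{F}(\tCI); \F[U])$, where (as the proof of that lemma makes explicit) this rank counts the number of $U$-towers in the localization $U^{-1} H_*(\mc{F}(\tCI); \F[U])$. So it suffices to show that $U^{-1} H_*(\mc{F}(\tCI); \F[U])$ is free of rank one over $\F[U, U^{-1}]$.

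The first step is to identify $C(\mc{F}(\tCI); \F[U])$ with the underlying $\F[U]$-chain complex $(C, \p)$. By construction, $\mc{G}(\mc{F}(\tCI)) = \tCI$, whose underlying differential on $\mc{R} \otimes_\F X$ is $\delta^1_{\tCI} = \p + Q\omega$, where $X$ denotes an $\F$-basis of $C$. Tensoring with $\mc{R}/(Q) \cong \F[U]$ kills the $Q \omega$ summand and leaves the $\F[U]$-complex $(\F[U] \otimes_\F X, \p)$, which is exactly $(C, \p)$.

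Once this identification is in hand, the defining condition of an almost $\iota$-complex (Definition~\ref{def_almost_iota_complex}) gives $U^{-1} H_*(C) \cong \F[U, U^{-1}]$, which has rank one over $\F[U, U^{-1}]$. Combining this with Lemma~\ref{lem:rank_of_homology_over_fu_and_fq_from_counting_puncture_connections} yields the desired uniqueness.

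There is no real obstacle here: the bulk of the work was already done in Lemma~\ref{lem:rank_of_homology_over_fu_and_fq_from_counting_puncture_connections}, which put generators connecting to the $U$-puncture in bijection with tower generators of $U^{-1} H_*(\mc{F}(\tCI); \F[U])$. The only point requiring care is to verify explicitly that setting $Q = 0$ in $\delta^1_{\tCI} = \p + Q\omega$ recovers the original $\F[U]$-differential on $C$ — a direct consequence of Definition~\ref{def:r_representative_of_c} — so that the almost $\iota$-complex hypothesis on $\C$ can be transferred to the precurve side.
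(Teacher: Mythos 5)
Your argument is exactly the paper's: it invokes Lemma~\ref{lem:rank_of_homology_over_fu_and_fq_from_counting_puncture_connections} to convert the count of generators at the $U$-puncture into $\rk H_*(\mc{F}(\tCI); \F[U])$, and then uses the single-$U$-tower condition from Definition~\ref{def_almost_iota_complex}. The only difference is that you spell out the (correct) identification of $C(\mc{F}(\tCI);\F[U])$ with $(C,\p)$, which the paper leaves implicit.
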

\begin{proof}
	Since $\C$ is an almost $\iota$-complex, $\rk H_*(\mc{F}(\tCI); \F[U]) = 1$. By Lemma~\ref{lem:rank_of_homology_over_fu_and_fq_from_counting_puncture_connections}, this means there must be a single generator on $s_1(a)$ connecting to the $U$-puncture in $\mc{F}(\tCI)$. 
\end{proof}
\begin{prop} \label{prop:primitive_from_reduced}
	Any reduced almost $\iota$-complex $\C$ has a primitive representative.
\end{prop}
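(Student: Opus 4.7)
The plan is to start with the multicurve representative $\gamma(\tCI) = \{(\gamma_1, X_1), \ldots, (\gamma_n, X_n)\}$ provided by theorem~\ref{thm:class_decorated_immersed} and isolate the unique component that touches the $U$-puncture, showing that it alone determines the local equivalence class of $\C$.

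First, combining lemmas~\ref{lem:u_puncture_gives_nontorsion_homology}, \ref{lem:r_chain_iso_descent} and \ref{lem:rank_of_homology_over_fu_and_fq_from_counting_puncture_connections}, I would conclude that exactly one generator in the graphical presentation of $\gamma(\tCI)$ connects to the $U$-puncture: the chain isomorphism $\mc{F}(\tCI) \to \gamma(\tCI)$ descends over $\F[U]$, so both sides have $\F[U]$-homology of rank one, and that rank equals the number of $U$-puncture connections. Relabeling so that this generator lies on $\gamma_1$, the parallel-copies construction in construction~\ref{construction:drawing_decorations} forces $\dim X_1 = 1$ (otherwise the $\dim X_1$ parallel copies of $\gamma_1$ would each contribute a distinct $U$-puncture generator), so $\gamma_1$ carries the trivial decoration. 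The same counting argument forces $\gamma_1$ to have exactly one endpoint at the $U$-puncture, and since an immersed curve on the twice punctured disk is either closed or has both endpoints on the inner boundary, the other endpoint of $\gamma_1$ must be at the $Q$-puncture.

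Next, I would use the fact that every $\p$-arrow and $Q$-arrow in the graphical representation of a multicurve $\gamma(\tCI)$ is supported within a single connected immersed-curve component---no arrow can bridge disjoint immersed curves. Thus, applying $\mc{G}$ one obtains a decomposition of the $\mc{R}$-complex $\mc{G}(\gamma(\tCI))$ as the direct sum of the complexes associated to $(\gamma_1, X_1)$ and to each $(\gamma_i, X_i)$ with $i>1$, and this decomposition survives the $Q^2$-reduction to give a splitting as almost $\iota$-complexes. Combining with lemma~\ref{reducing_precurve_preserves_local_equivalence_lem}, $\C$ is chain isomorphic (hence locally equivalent) to this direct sum, with the generator of the infinite $U$-tower lying in the $\gamma_1$ summand. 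Inductively applying lemma~\ref{lem:deleting_summands_almost_iota} then yields that $\C$ is locally equivalent to the $Q^2$-reduction of the almost $\iota$-complex associated to $\gamma_1$ alone, so $\gamma_1$ meets the requirements of definition~\ref{def:primitive_representative} and is the desired primitive representative $\gamma_0(\tCI)$.

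The main obstacle will be confirming that the splitting is honest at the level of almost $\iota$-complexes, i.e.\ that both $\p$ and $\omega$ preserve each immersed-curve summand so that lemma~\ref{lem:deleting_summands_almost_iota} genuinely applies. This is geometrically transparent from the precurve picture---every arrow sits in a face that neighbors a single component---but it requires carefully tracing how $\mc{G}$ and the $Q^2$-reduction interact with the decomposition, and in particular verifying that the induced involution on each summand remains a valid almost-$\iota$ structure.
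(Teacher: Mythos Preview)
Your proposal is correct and follows essentially the same approach as the paper: both isolate the unique $U$-puncture component of $\gamma(\tCI)$ via lemmas~\ref{lem:u_puncture_gives_nontorsion_homology}, \ref{lem:r_chain_iso_descent}, and \ref{lem:rank_of_homology_over_fu_and_fq_from_counting_puncture_connections}, argue its decoration is trivial from the parallel-copies construction, and then invoke the splitting lemma~\ref{lem:deleting_summands_almost_iota} on the $Q^2$-reduction. You additionally argue that the other endpoint of $\gamma_1$ lands on the $Q$-puncture, which the paper defers to the next lemma (lemma~\ref{lem_primitive_is_standard}); this is harmless extra content, and your concern about verifying that $\p$ and $\omega$ preserve the summands is exactly the point the paper glosses over when it asserts the curves ``induce a splitting.''
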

\begin{proof}
	Theorem~\ref{thm:class_decorated_immersed} gives that $C(\mc{F}(\tCI); \mc{R})$ is chain isomorphic to $C(\gamma(\tCI); \mc{R})$. Lemma~\ref{lem:u_puncture_gives_nontorsion_homology} guarantees that $\rk H_*(\mc{F}(\tCI); \F[U]) = 1$, and Lemma~\ref{lem:r_chain_iso_descent} guarantees that 
	$$H_*(\mc{F}(\tCI);\F[U]) \cong H_*(\gamma(\tCI); \F[U]),$$ 
	so Lemma~\ref{lem:rank_of_homology_over_fu_and_fq_from_counting_puncture_connections} gives that there must be a unique generator in $\gamma(\tCI)$ that connects to the $U$-puncture.
	
	Enumerate the components of $\gamma(\tCI)$ by $\{ (\gamma_0, X_0), \cdots, (\gamma_n, X_n) \}$. Assume without loss of generality that $\gamma_0$ is the unique curve connecting the $U$-puncture. If $X_0$ were nontrivial, then after drawing the precurve $\gamma(\tCI)$ using construction~\ref{construction:drawing_decorations}, there would multiple generators on $\gamma(\CI)$ connecting to the $U$-puncture. This would give that $\rk H_*(\gamma(\tCI) ; \F[U]) > 1$ by Lemma~\ref{lem:rank_of_homology_over_fu_and_fq_from_counting_puncture_connections}.
	
	The curves $\{ (\gamma_0, X_0), \cdots, (\gamma_n, X_n) \}$ induce a splitting on the $Q^2$-reduction of $\gamma(\tCI)$ into $\C_0 \oplus \mc{B}_1 \oplus \cdots \oplus \mc{B}_n$, where $\C_0$ is the $Q^2$-reduction of $(\gamma_0, X_0)$ and $\mc{B}_i$ is the $Q^2$-reduction of $(\gamma_i, X_i)$ for $i \neq 0$. By Lemma~\ref{lem:deleting_summands_almost_iota}, this gives that the $Q^2$-reduction of $\gamma(\tCI)$ is locally equivalent to $\C_0$. Since $\C_0$ is the $Q^2$-reduction of $(\gamma_0, X_0)$, this proves that $\gamma_0$ is a primitive representative of $\C$. Therefore any reduced almost $\iota$-complex has a primitive representative.
\end{proof}

\subsection{Proving the classification theorem} \label{subsection_classification}

Now that a primitive representative can be assigned to any reduced almost $\iota$-complex $\C$, the classification of almost $\iota$-complexes is nearly complete. What remains to be done is to classify the possible $\gamma_0(\tCI)$, and then to show that two almost $\iota$-complexes $\C_1$ and $\C_2$ are locally equivalent if and only if $\gamma_0(\tI{\C_1}) \cong \gamma_0(\tI{\C_2})$. 

\begin{lem} \label{lem_primitive_is_standard}
	Given a reduced almost $\iota$-complex $\C$, the $Q^2$-reduction of $\gamma_0(\tCI)$ is a standard complex. 
\end{lem}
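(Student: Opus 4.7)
The plan is to unpack the geometry of $\gamma_0(\tCI)$ as an undecorated immersed arc on the twice punctured disk and directly identify its $Q^2$-reduction with a standard complex.

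First, I would show that $\gamma_0(\tCI)$ is a single undecorated immersed arc with one endpoint at the $U$-puncture and the other at the $Q$-puncture. That one endpoint sits at the $U$-puncture is built into the definition of the primitive representative via Proposition~\ref{prop:primitive_from_reduced}. For the other endpoint: Lemma~\ref{lem:r_chain_iso_descent} gives $H_*(\gamma_0(\tCI);\F[U]) \cong H_*(\mc{F}(\tCI);\F[U])$, and then Lemma~\ref{lem:u_puncture_gives_nontorsion_homology} together with Lemma~\ref{lem:rank_of_homology_over_fu_and_fq_from_counting_puncture_connections} forces exactly one generator of $\gamma_0(\tCI)$ to connect to the $U$-puncture. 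Consequently the other endpoint cannot also lie at the $U$-puncture, and $\gamma_0$ is not closed since one end is fixed, so the other endpoint must be at the $Q$-puncture.

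Next, label the intersections of $\gamma_0$ with the arc $a$ in order from the $U$-end to the $Q$-end as $T_0, T_1, \ldots, T_m$, and let $\sigma_i$ be the segment of $\gamma_0$ from $T_{i-1}$ to $T_i$. Transversality of each crossing swaps the side of $a$ the curve is on, so consecutive $\sigma_i$ lie on opposite sides of $a$ and therefore on opposite faces. The tail at the $U$-puncture sits in the $U$-face, so after swapping at $T_0$ the segment $\sigma_1$ lies in the $Q$-face; symmetrically the tail at the $Q$-puncture sits in the $Q$-face, forcing $\sigma_m$ to lie in the $U$-face. Therefore the face pattern of $\sigma_1,\ldots,\sigma_m$ is $Q,U,Q,U,\ldots,U$, and $m=2n$ for some $n\geq 0$.

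Third, each $\sigma_i$, being part of a reduced precurve that has been fully simplified to an immersed curve, wraps its puncture at least once. By Lemma~\ref{lem:one_q_loop}, each $Q$-segment $\sigma_{2i-1}$ wraps the $Q$-puncture exactly once and therefore contributes a single $Q$-arrow between $T_{2i-2}$ and $T_{2i-1}$; the orientation of that arrow is recorded as $a_{2i-1}\in\{+,-\}$. Each $U$-segment $\sigma_{2i}$ wraps the $U$-puncture $|b_{2i}|\geq 1$ times and contributes a $U^{|b_{2i}|}$-arrow between $T_{2i-1}$ and $T_{2i}$, whose orientation fixes the sign of $b_{2i}$. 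Applying $\mc{G}$ and then $Q^2$-reducing converts the $Q$-arrows into the action $\omega=1+\biota$ and the $U^{|b_{2i}|}$-arrows into $\partial$, so the resulting complex on generators $T_0,\ldots,T_{2n}$ is exactly the standard complex $\C(a_1,b_2,a_3,b_4,\ldots,a_{2n-1},b_{2n})$.

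The main obstacle will be reconciling sign and orientation conventions: one must trace carefully which side of $a$ each $T_i$ sits on, how the direction of each wrapping path in $\bar{\A}$ descends through $\mc{G}$ to the direction of the corresponding $\omega$ or $\partial$ arrow, and verify this matches the $a_i$, $b_{2i}$ conventions used to define $\C(a_1,b_2,\ldots,a_{2n-1},b_{2n})$. Once this bookkeeping is in hand the identification is immediate.
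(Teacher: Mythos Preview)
Your proposal is correct and follows essentially the same route as the paper: both arguments traverse $\gamma_0(\tCI)$ from the $U$-puncture, use the rank-one constraint on $H_*(\,\cdot\,;\F[U])$ to force the other endpoint to the $Q$-puncture, observe that consecutive segments alternate between the $U$-face and the $Q$-face, invoke Lemma~\ref{lem:one_q_loop} to pin down the $Q$-segments as single wraps, and read off the standard-complex parameters $(a_i,b_i)$ from the orientation and winding of each segment. One small citation slip: the isomorphism $H_*(\gamma_0(\tCI);\F[U]) \cong H_*(\mc{F}(\tCI);\F[U])$ does not follow directly from Lemma~\ref{lem:r_chain_iso_descent}, since $\gamma_0$ is only a summand of $\gamma(\tCI)$, not chain isomorphic to $\mc{F}(\tCI)$; the uniqueness of the $U$-puncture connection is already established inside Proposition~\ref{prop:primitive_from_reduced} for the full multicurve $\gamma(\tCI)$, and that is what you should cite.
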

\begin{proof}
	The argument to show that $\gamma_0(\tCI)$ represents a standard complex is purely combinatorial. Proposition~\ref{prop:primitive_from_reduced} guarantees that $\gamma_0(\CI)$ is an immersed curve with no decoration. Therefore $\gamma_0(\tCI)$ must start from the $U$-puncture and continue to $N(a)$, as shown in figure~\ref{fig:gamma0_start}.
	
	\begin{figure}[ht!]
		\centering
		\def\svgwidth{0.325\textwidth}
		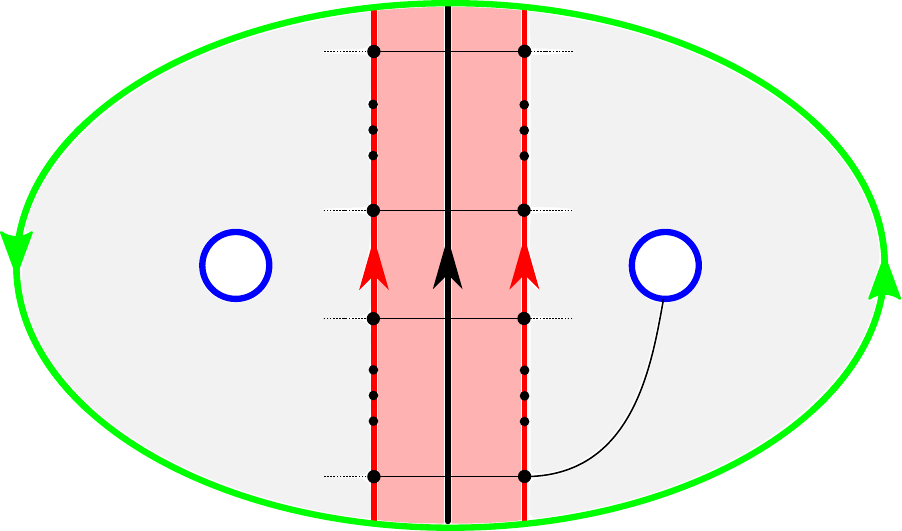
		\caption{Shown above is the connection of the unique tower element with the $U$-puncture. The other segments of $\gamma_0(\CI)$ are yet to be determined.}
		\label{fig:gamma0_start}
	\end{figure}

	This proof will traverse inductively up the spine of $a$, determining each possible segment of $\gamma_0(\tCI)$ until it is shown that the $Q^2$-reduction of $\gamma_0(\tCI)$ is a standard complex. Note that by homotoping $\gamma_0(\tCI)$, it can be assumed that each arc connects a generator on $s_i(a)$ to the generator directly above it. The generators on $s_1(a)$ are labelled by $x_0^1, \cdots, x_n^1$ and the generators on $s_2(a)$ are labelled $x_0^2, \cdots, x_n^2$, from bottom to top. Here it is implicit the $Q^2$ reduction of $\gamma_0(\tCI)$ is generated by $x_1, \cdots, x_n$, that $\mc{G}(x_i^1) = x_i$, and that $\mc{G}(x_i^2) = 0$. To work out explicitly how $\mc{G}$ acts on the morphisms, see equation~\ref{equation:mcG_on_morphisms}. In figure~\ref{fig:gamma0_start}, $x_0^1$ is the only completed generator. Now to continue from $x_0^2$. The segment connecting $x_0^2$ can wrap around the $Q$-puncture at most once by Lemma~\ref{lem:one_q_loop}. Thus there are three possibilities, shown in figure~\ref{fig:gamma0_base_case}. 
	
	\begin{figure}[ht!]
		\begin{subfigure}{.325\textwidth}
			\def\svgwidth{0.95\textwidth}
			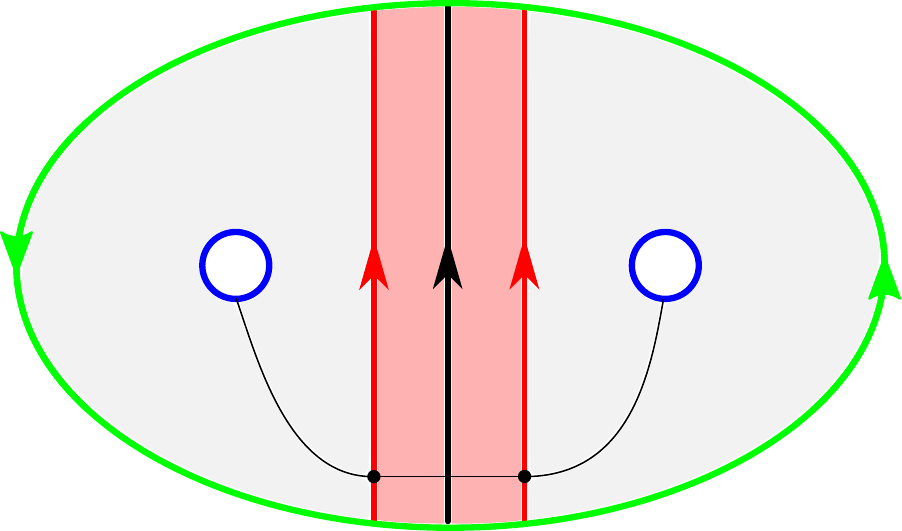
			\label{fig:gamma0_base_case_trivial}
		\end{subfigure}
		\begin{subfigure}{.325\textwidth}
			\def\svgwidth{0.95\textwidth}
			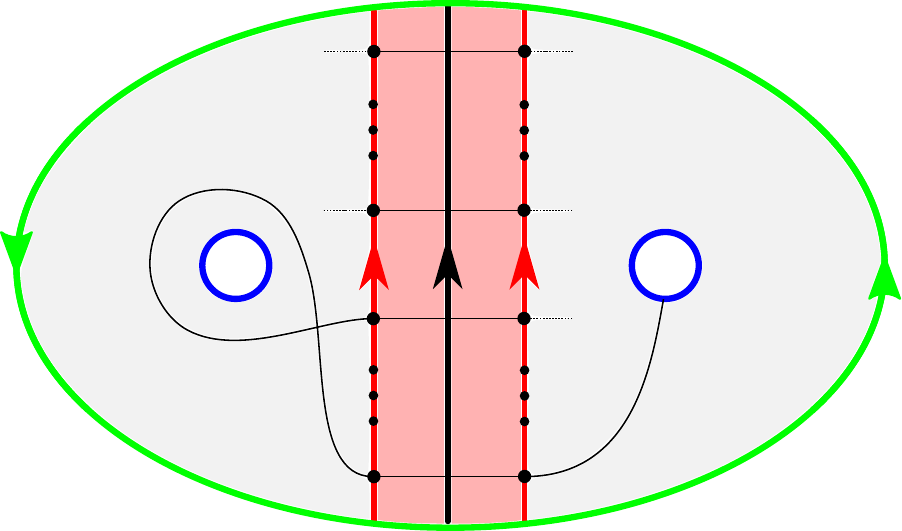
			\label{fig:gamma0_base_case_twist}
		\end{subfigure}
		\begin{subfigure}{.325\textwidth}
			\def\svgwidth{0.95\textwidth}
			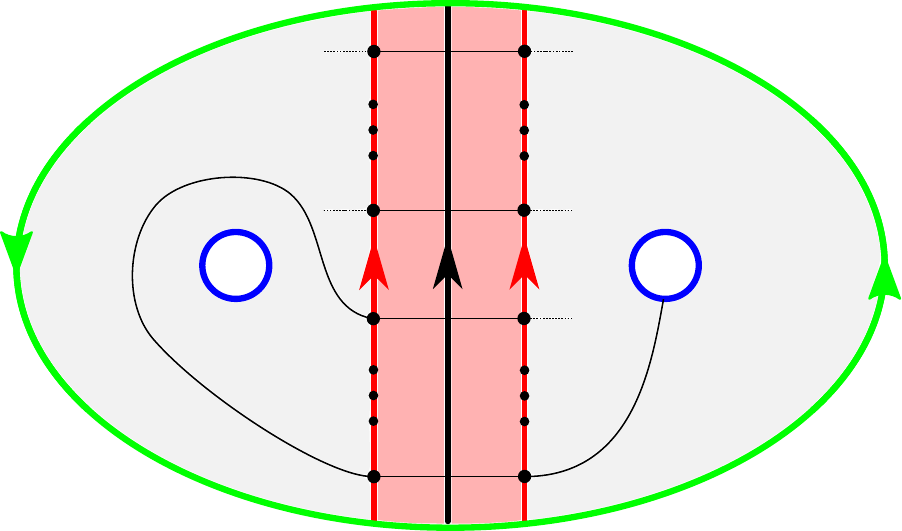
			\label{fig:gamma0_base_case_no_twist}
		\end{subfigure}
		\caption{The three possible ways to complete $x_0^2$.}
		\label{fig:gamma0_base_case}
	\end{figure}

	In the first case, $x_0^2$ is connected to the $Q$-puncture, terminating $\gamma_0(\tCI)$.  In this case, the $Q^2$-reduction of $\gamma_0(\tCI)$ is the trivial standard complex $\C(0)$. In the second case, $x_0^2$ is connected to $x_0^1$ by a segment that sets $\omega x_0 = x_1$ in the $Q^2$-reduction of $\gamma_0(\tCI)$. Setting $\omega x_0 = x_1$ corresponds to setting $a_1 = -$ in the standard complex parametrization. In the last case, $x_0^2$ is connected to $x_1^2$ by a segment that wraps once around the $Q$-puncture in a way that sets $\omega x_1 = x_0$ in the $Q^2$-reduction of $\gamma_0(\tCI)$. This corresponds to setting $a_1 = +$ in the standard complex parametrization.
	
	Suppose all generators $x_0^1, \cdots, x_{i-1}^1$ and $x_0^2, \cdots, x_{i}^2$ have been completed, and the segment connecting $x_i^1$ is yet to be determined. This can only happen if $i$ is odd. Then there are two possible ways to connect $x_i^1$, shown in figure~\ref{fig:gamma0differentialinduction}. Notice that neither possiblility terminates $\gamma_0(\tCI)$, since Lemma~\ref{lem:rank_of_homology_over_fu_and_fq_from_counting_puncture_connections} would force $\rk H_*(\gamma_0(\tCI);\F[U]) = 2$ if the curve went to the $U$-puncture again. However, this is impossible by Lemma~\ref{lem:u_puncture_gives_nontorsion_homology}. In the first case, $x_{i}^1$ is connected to $x_{i+1}^1$ by a segment that travels $n$ times around the $U$-puncture in a way that sets $\p x_{i} = U^n x_{i+1}$ in the $Q^2$-reduction of $\gamma_0(\tCI)$. This corresponds to setting $b_{i+1} = -n$ in the standard complex parametrization. In the second case, $x_{i}^1$ is connected to $x_{i+1}^1$ by a segment that travels $n$ times around the $U$-puncture in a way that sets $\p x_{i+1} = U^n x_{i}$ in the $Q^2$-reduction of $\gamma_0(\tCI)$. This corresponds to setting $b_{i+1} = n$ in the standard complex parametrization.
	
	\begin{figure}[ht!]
		\begin{subfigure}{.325\textwidth}
			\def\svgwidth{0.95\textwidth}
\begingroup%
  \makeatletter%
  \providecommand\color[2][]{%
    \errmessage{(Inkscape) Color is used for the text in Inkscape, but the package 'color.sty' is not loaded}%
    \renewcommand\color[2][]{}%
  }%
  \providecommand\transparent[1]{%
    \errmessage{(Inkscape) Transparency is used (non-zero) for the text in Inkscape, but the package 'transparent.sty' is not loaded}%
    \renewcommand\transparent[1]{}%
  }%
  \providecommand\rotatebox[2]{#2}%
  \newcommand*\fsize{\dimexpr\f@size pt\relax}%
  \newcommand*\lineheight[1]{\fontsize{\fsize}{#1\fsize}\selectfont}%
  \ifx\svgwidth\undefined%
    \setlength{\unitlength}{432.53171145bp}%
    \ifx\svgscale\undefined%
      \relax%
    \else%
      \setlength{\unitlength}{\unitlength * \real{\svgscale}}%
    \fi%
  \else%
    \setlength{\unitlength}{\svgwidth}%
  \fi%
  \global\let\svgwidth\undefined%
  \global\let\svgscale\undefined%
  \makeatother%
  \begin{picture}(1,0.58915273)%
    \lineheight{1}%
    \setlength\tabcolsep{0pt}%
    \put(0,0){\includegraphics[width=\unitlength,page=1]{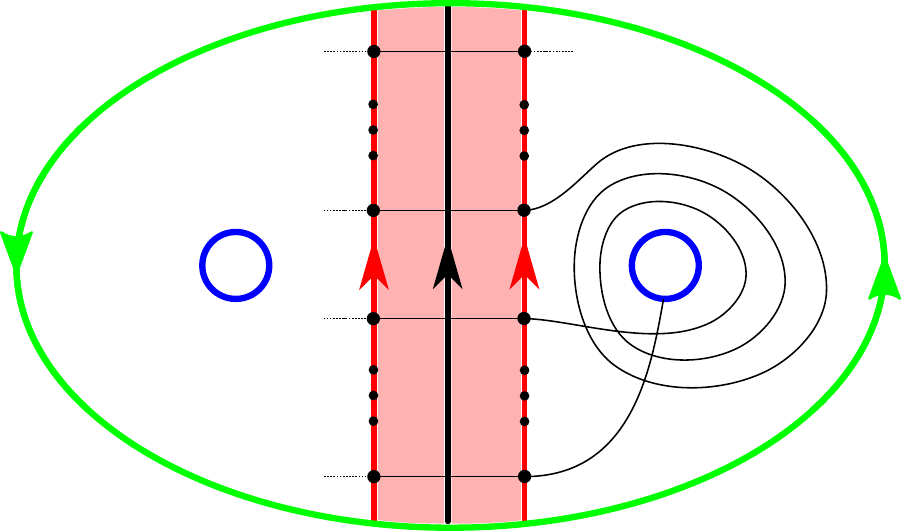}}%
    \put(0.91939436,0.32776969){\makebox(0,0)[lt]{\lineheight{1.25}\smash{\begin{tabular}[t]{l}$n$\end{tabular}}}}%
    \put(0,0){\includegraphics[width=\unitlength,page=2]{gamma_0_induction_U_+.pdf}}%
  \end{picture}%
\endgroup%

			\label{fig:gamma0_induction_U_+}
		\end{subfigure}
		\begin{subfigure}{.325\textwidth}
			\def\svgwidth{0.95\textwidth}
\begingroup%
  \makeatletter%
  \providecommand\color[2][]{%
    \errmessage{(Inkscape) Color is used for the text in Inkscape, but the package 'color.sty' is not loaded}%
    \renewcommand\color[2][]{}%
  }%
  \providecommand\transparent[1]{%
    \errmessage{(Inkscape) Transparency is used (non-zero) for the text in Inkscape, but the package 'transparent.sty' is not loaded}%
    \renewcommand\transparent[1]{}%
  }%
  \providecommand\rotatebox[2]{#2}%
  \newcommand*\fsize{\dimexpr\f@size pt\relax}%
  \newcommand*\lineheight[1]{\fontsize{\fsize}{#1\fsize}\selectfont}%
  \ifx\svgwidth\undefined%
    \setlength{\unitlength}{432.53171145bp}%
    \ifx\svgscale\undefined%
      \relax%
    \else%
      \setlength{\unitlength}{\unitlength * \real{\svgscale}}%
    \fi%
  \else%
    \setlength{\unitlength}{\svgwidth}%
  \fi%
  \global\let\svgwidth\undefined%
  \global\let\svgscale\undefined%
  \makeatother%
  \begin{picture}(1,0.58915273)%
    \lineheight{1}%
    \setlength\tabcolsep{0pt}%
    \put(0,0){\includegraphics[width=\unitlength,page=1]{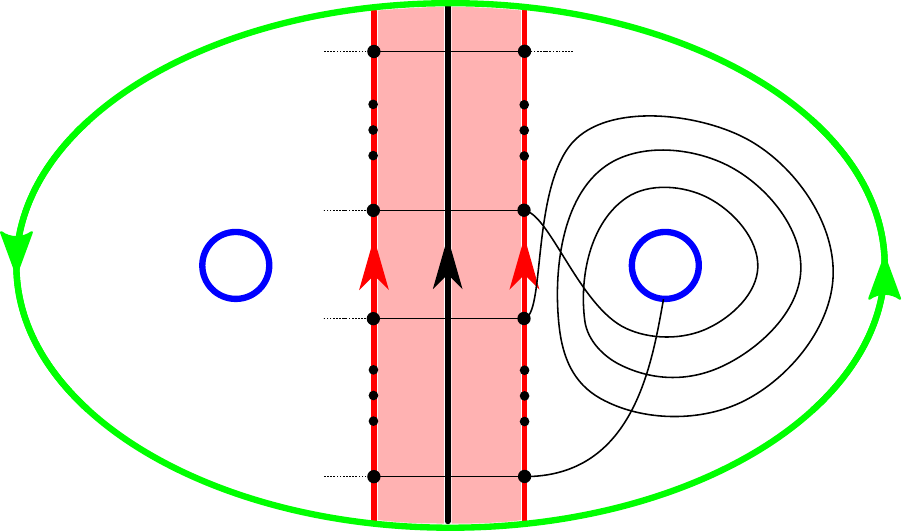}}%
    \put(0.93039329,0.32676109){\makebox(0,0)[lt]{\lineheight{1.25}\smash{\begin{tabular}[t]{l}$n$\end{tabular}}}}%
    \put(0,0){\includegraphics[width=\unitlength,page=2]{gamma_0_induction_U_-.pdf}}%
  \end{picture}%
\endgroup%

			\label{fig:gamma0_induction_U_-}
		\end{subfigure}
		\caption{The two possible ways to complete $x_{i}^1$ in $\gamma_0(\CI)$.}
		\label{fig:gamma0differentialinduction}
	\end{figure}
	
	Now suppose that all the generators $x_0^1, \cdots, x_{i}^1$ and $x_0^2, \cdots, x_{i-1}^2$ have been completed, and the segment connecting $x_{i}^2$ is not yet determined. According to the inductive process, this will only happen for even $i$.  Then Lemma~\ref{lem:one_q_loop} again requires that any segment from $x_{i}^2$ wraps at most one time about the $Q$-puncture. This gives three cases, shown in figure \ref{fig:gamma0involutioninduction}. The first possibility sends $x_{i}^2$ to the $Q$-puncture, and terminates $\gamma_0(\tCI)$. In this case, $\gamma_0(\tCI)$ $Q^2$-reduces to the standard complex parametrized by the previous steps. In the second case, $x_{i}^2$ is connected to $x_{i+1}^2$ by a segment wrapping around the $Q$-puncture in a way that sets $\omega x_{i} = x_{i+1}$ in the $Q^2$-reduction of $\gamma_0(\tCI)$. This corresponds to setting $a_{i+1} = -$ in the standard complex parametrization. In the third case, $x_{i}^2$ is connected to $x_{i+1}^2$ by a segment wrapping around the $Q$-puncture in a way that sets $\omega x_{i+1} = x_{i}$ in the $Q^2$-reduction of $\gamma_0(\tCI)$. This corresponds to setting $a_{i+1} = +$ in the standard complex parametrization.
	
	\begin{figure}[ht!]
		\begin{subfigure}{.325\textwidth}
			\def\svgwidth{0.95\textwidth}
			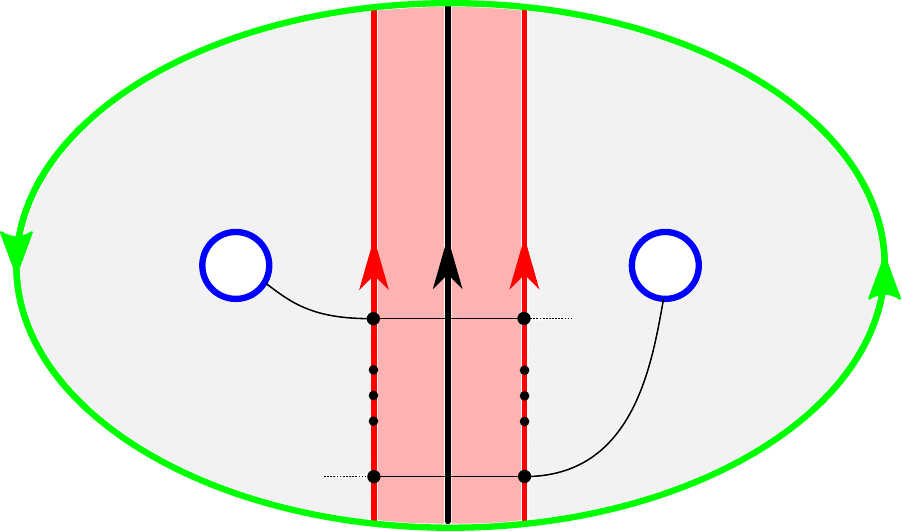
			\label{fig:gamma0_induction_Q_end}
		\end{subfigure}
		\begin{subfigure}{.325\textwidth}
			\def\svgwidth{0.95\textwidth}
			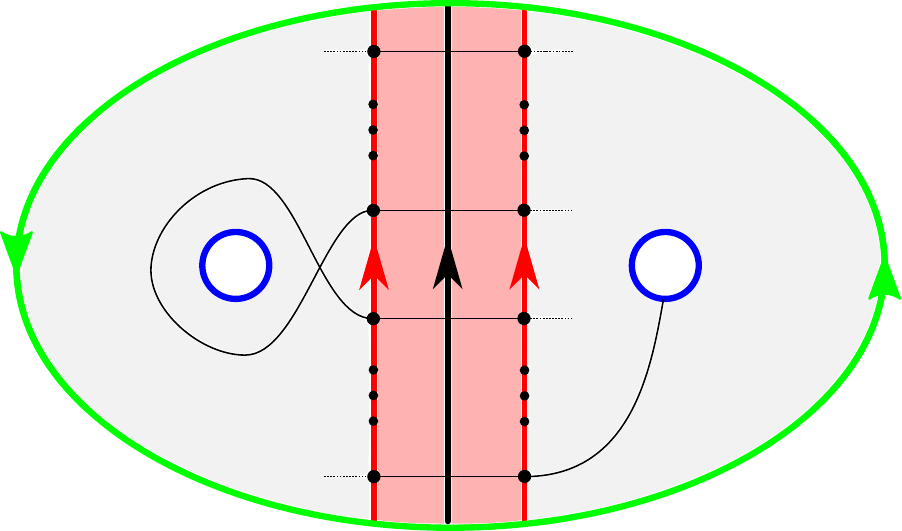
			\label{fig:gamma0_induction_Q_-}
		\end{subfigure}
		\begin{subfigure}{.325\textwidth}
			\def\svgwidth{0.95\textwidth}
			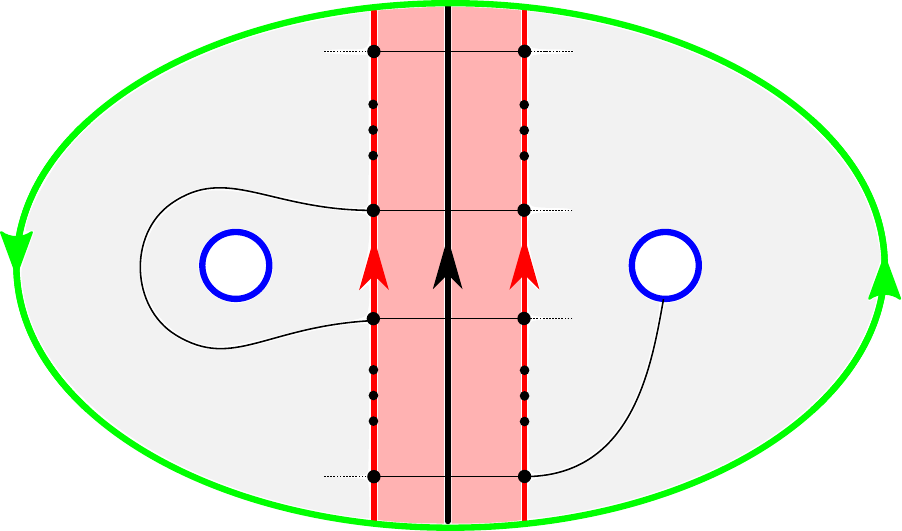
			\label{fig:gamma0_induction_Q_+}
		\end{subfigure}
		\caption{The three possible ways to complete $x_{i-1}^2$ and $x_{i}^2$ for odd $i$ in $\gamma_0(\CI)$.}
		\label{fig:gamma0involutioninduction}
	\end{figure}

	Inductively, this proves that the $Q^2$ reduction of $\gamma_0(\tCI)$ is a standard complex, as eventually this process must terminate by ending at the $Q$-puncture.
\end{proof}

\begin{thm}\label{thm:primitive_rep_of_almost_iota_complex}
	For any almost $\iota$ complex $\C$, there is a unique primitive representative $\gamma_0(\tCI)$, and $\gamma_0(\tCI)$ $Q^2$-reduces to the standard complex representative of $\C$.
\end{thm}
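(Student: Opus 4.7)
To begin, I would establish existence. By Lemma~\ref{lem:almost_iota_locally_equivalent_to_reduced}, $\C$ is locally equivalent to a reduced almost $\iota$-complex $\C_r$. Proposition~\ref{prop:primitive_from_reduced} then produces a primitive representative $\gamma_0(\tI{\C_r})$; since local equivalence is transitive and the definition of primitive representative is invariant under it, $\gamma_0(\tI{\C_r})$ also serves as a primitive representative of $\C$. Lemma~\ref{lem_primitive_is_standard} further guarantees that the $Q^2$-reduction of $\gamma_0(\tI{\C_r})$ is a standard complex, and by Definition~\ref{def:primitive_representative} it is locally equivalent to $\C$. Invoking the fact that each local equivalence class in $\hI$ contains at most one standard complex (\cite{dai2018infiniterank}), this standard complex must be the standard representative of $\C$.

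For uniqueness, suppose $\gamma_0$ and $\gamma_0'$ are two primitive representatives of $\C$. Their $Q^2$-reductions $\C_s$ and $\C_s'$ are standard complexes locally equivalent to $\C$, hence to each other, so $\C_s = \C_s'$ as parametrized standard complexes. The inductive analysis in the proof of Lemma~\ref{lem_primitive_is_standard} can be run in reverse: starting from the common standard parametrization $(a_1, b_2, \ldots, a_{2n-1}, b_{2n})$, each $a_i \in \{\pm\}$ forces a unique pairing of adjacent generators on $s_2(a)$ around the $Q$-puncture, and each $b_i \in \Z \setminus \{0\}$ forces a unique signed winding number of the corresponding segment around the $U$-puncture. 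Homotopy on the twice-punctured disk preserves both the pairing data on the arc $a$ and the winding numbers around either puncture, so the curve reconstructed from the standard parametrization is unique up to homotopy. This yields $\gamma_0 \simeq \gamma_0'$.

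The main obstacle is the appeal to uniqueness of the standard complex representative within a local equivalence class. Although Theorem~\ref{thm:hi_classification} as stated only asserts existence of a standard representative, the uniqueness used here is the separate classification statement in \cite{dai2018infiniterank}. Within the present framework this uniqueness can in fact be recovered independently: if one first proves uniqueness of the primitive curve by a direct geometric argument (using signed winding numbers about each puncture and pairing data on $s_1(a), s_2(a)$ as homotopy invariants of immersed curves on the twice-punctured disk), then uniqueness of the standard parametrization follows as a corollary via the correspondence set up in Lemma~\ref{lem_primitive_is_standard}, avoiding any circularity. Either route completes the proof.
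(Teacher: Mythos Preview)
Your proposal is correct and follows essentially the same route as the paper: both arguments obtain existence from Proposition~\ref{prop:primitive_from_reduced} and Lemma~\ref{lem_primitive_is_standard}, and both deduce uniqueness by appealing to Theorem~4.6 of \cite{dai2018infiniterank} (uniqueness of the standard complex in a local equivalence class), together with the observation that the correspondence in Lemma~\ref{lem_primitive_is_standard} between primitive curves and standard parametrizations is bijective. Your write-up is in fact more explicit than the paper's on the existence step and on why equal standard parametrizations force homotopic curves; the additional remark about avoiding circularity is a reasonable aside but unnecessary, since both proofs treat Theorem~4.6 of \cite{dai2018infiniterank} as an external input.
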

\begin{proof}
	Suppose $\C$ is locally equivalent to two different reduced standard complexes $\C_1$ and $\C_2$. Then $\C_1$ is locally equivalent to the $Q^2$-reduction of $\gamma_0(\tI{\C_1})$ and $\C_2$ is locally equivalent to the $Q^2$-reduction of $\gamma_0(\tI{\C_2})$. This gives is a local equivalence between the $Q^2$-reductions of $\gamma_0(\tI{\C_1})$ and $\gamma_0(\tI{\C_2})$, which are both standard complexes. By Theorem 4.6 of \cite{dai2018infiniterank}, this proves that $\gamma_0(\tI{\C_1}) = \gamma_0(\tI{\C_2})$.
\end{proof}

\begin{proof}[Proof of Theorem~\ref{thm:hi_classification}]
	Every almost $\iota$-complex $\C$ is locally equivalent to the $Q^2$-reduction of $\gamma_0(\tCI)$, which is a standard complex.
\end{proof}

\section{Homomorphisms from $\hI$ to $\Z$} \label{section:homomorphisms}

This section will describe all known homomorphisms from $\hI$ to $\Z$ using precurves. First, it will be proven that $P$ and $P_\omega$ are homomorphisms, and then it will be shown that the maps $\phi_n : \hI \to \Z$ are homomorphisms as well. The maps $P$ and $P_\omega$ from $\hI$ to $\Z$ are defined as follows:
\begin{define} \label{def:pivotal_map}
	Let $\C$ be the standard complex representative of an element of $\hI$, and let $\C$ be generated by $T_0, \cdots, T_{2n}$. Then $P(\C)$ is the grading of the final generator $T_{2n}$.
\end{define}
\begin{define} \label{def:p_omega}
	Let $\C = \C(a_i, b_i)$ be the standard complex representative of an element of $\hI$, then 
	$$P_\omega(\C) = \# \{ a_i = + \} - \# \{ a_i = - \}.$$
\end{define}

Recall the functor $\mc{G} : \Cx \Mat_i \bar{\A} \to \Cx \Mat \A$. 
The heart of the proofs in this section come down to studying the following elements of a standard complex:

\begin{define}
	For a standard almost $\iota$-complex $\C$, the \textit{initial generator} $x_i(\C)$ is the unique element in $\mc{G}(\gamma_0(\tCI))$ which is in $\ker \p$ and for which there is no power of $U$ in $\im \p$. 
\end{define}
It is best to think of $x_i(\C)$ as the image of the generator connecting to the $U$-puncture on $\gamma_0(\tCI)$
\begin{define}
	For a standard almost $\iota$-complex $\C$, the \textit{final generator} $x_f(\C)$ is the unique generator in $\mc{G}(\gamma_0(\tCI))$ which is in $\ker Q \omega$ and for which there is no power of $Q$ in $\im Q \omega$. 
\end{define}
It is best to think of $x_f(\C)$ as roughly corresponding to the generator connecting to the $Q$ puncture on $\gamma_0(\tCI)$. 
In standard complexes $\C$ generated by $T_0, \cdots, T_{2n}$, $x_i(\C)$ is $T_0$ and $x_{f}(\C)$ is $T_{2n}$. 

The fact that $P$ and $P_\omega$ are homomorphisms will be shown using a tensor product formula proven in subsection~\ref{subsection:a_tensor_product_formula} and special bigrading constructed in subsection~\ref{subsection:counting_bigrading_of_final_generator}. 

Precurves are particularly well adapted to showing that the shift maps $\sh_n$ from \cite{dai2018infiniterank} are endomorphisms of $\hI$, and this will be used in subsection~\ref{subsection:shift_maps_and_phi_n_with_precurves} to prove that the maps $\phi_n$ from \cite{dai2018infiniterank} are homomorphisms.

\subsection{A tensor product formula for almost $\iota$-complexes} \label{subsection:a_tensor_product_formula}
Let $\C = (C, \p, \biota)$ be an almost $\iota$-complex generated by $\{x_1, \cdots, x_n\}$ with $\F$ span $X$. 
The complex $\tCI$ admits a natural $\mc{R}$ bimodule structure $\cdot$. Let $\p_i \in \mc{R}$. The left $\mc{R}$-module structure $\cdot: \mc{R} \times \tCI \to \tCI$ is defined by 
$$U \cdot (\sum p_i x_i) = \sum U p_i x_i, \qquad Q \cdot (\sum p_i x_i) = \sum Q p_i x_i.$$
The left $\mc{R}$-module structure $\cdot : \tCI \times \R \to \tCI$ is defined by
$$(\sum p_i x_i) \cdot U = \sum U p_i x_i, \qquad (\sum p_i x_i) \cdot Q = \sum Q p_i x_i.$$
To prove the tensor product formula, it will be necessary to define a new $\mc{R}$ bimodule structure $\tCI'$.
\begin{define}
	The elements of $\tCI'$ are $\mc{R} \otimes_{\F} X$. The left $\R$ module structure of $\tCI'$ is defined by
	$$U \cdot (\sum p_i x_i) = \sum U p_i x_i, \qquad Q \cdot (\sum p_i x_i) = \sum Q p_i x_i$$
	and the right $\R$ module structure of $\tCI'$ is defined by
	$$ (\sum p_i x_i) \cdot U =  \sum U p_i x_i, \qquad  (\sum p_i x_i) \cdot Q = \sum Q p_i \biota x_i.$$
	If $\C$ is reduced, then $\tCI'$ is a chain complex with differential
	$$\delta^1_{\tI{\C}'} = \p + Q \omega.$$
\end{define}
\begin{rem}
Recall from Definition 3.15 of \cite{dai2018infiniterank} that $\hI$ is a group where the elements of $\hI$ are the equivalence classes of almost $\iota$-complexes and the operation is the tensor product of those classes. Here we denote that operation by $*$ to avoid conflating the group operation in $\hI$ with the tensor product symbol.
\end{rem}
Note that if $\C_1$ and $\C_2$ are reduced complexes, then $\C_1 *\C_2$ is as well. The lift of $\C_1 * \C_2$ to an $\mc{R}$ bimodule is denoted $\tI{(\C_1 * \C_2)}$.
\begin{thm}[A tensor product formula for almost $\iota$-complexes] \label{thm:tensor_product_formula_for_almost_iota_complexes}
	If $\C_1$ and $\C_2$ are reduced almost $\iota$-complexes, then
	$$\tI{(\C_1 * \C_2)} \cong \tI{\C_1}' \otimes \tI{\C_2}$$
	as left $\mc{R}$ modules and $\mc{R}$ complexes.
\end{thm}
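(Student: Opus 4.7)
The plan is to exhibit an explicit isomorphism between the two complexes. As $\F$-vector spaces, both $\tI{(\C_1 * \C_2)}$ and $\tI{\C_1}' \otimes_{\mc{R}} \tI{\C_2}$ may be identified with $\mc{R} \otimes_{\F} (X_1 \otimes_{\F} X_2)$. On the left side this identification is by definition. On the right side, observe that $\tI{\C_2}$ is free as a left $\mc{R}$-module with basis $X_2$, so for any right $\mc{R}$-module $M$ one has $M \otimes_{\mc{R}} \tI{\C_2} \cong \bigoplus_{x_2 \in X_2} M$; applying this to $M = \tI{\C_1}' = \mc{R} \otimes_{\F} X_1$ gives the desired $\F$-linear identification, and it is immediately a left $\mc{R}$-module isomorphism because the left $\mc{R}$-structure on the tensor product is inherited from the left $\mc{R}$-structure on the first factor, which is the standard one on $\mc{R} \otimes_{\F} X_1$. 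Call the resulting map $\Phi$, sending $p \otimes (x_1 \otimes x_2)$ to $(p \otimes x_1) \otimes (1 \otimes x_2)$.

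The core of the proof is then to verify that $\Phi$ intertwines the two differentials. On an element $p \otimes x_1 \otimes x_2$, the differential on $\tI{(\C_1*\C_2)}$ evaluates to
$$ p \otimes \p_{12}(x_1 \otimes x_2) + Qp \otimes \omega_{12}(x_1 \otimes x_2), $$
where $\omega_{12} = 1 + \biota_1 \otimes \biota_2 = \omega_1 \otimes 1 + 1 \otimes \omega_2 + \omega_1 \otimes \omega_2$. On the other side, one computes $(\delta^1_{\tI{\C_1}'} \otimes 1 + 1 \otimes \delta^1_{\tI{\C_2}})$ applied to $(p \otimes x_1) \otimes (1 \otimes x_2)$. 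The term $\delta^1_{\tI{\C_1}'} \otimes 1$ contributes $p \otimes \p_1 x_1 \otimes x_2 + Qp \otimes \omega_1 x_1 \otimes x_2$ exactly as expected. The subtle term is $1 \otimes \delta^1_{\tI{\C_2}}$, which produces $(p \otimes x_1) \otimes \p_2 x_2 + (p \otimes x_1) \otimes Q \omega_2 x_2$; the point is that to move each monomial across the tensor product back into the standard form in $\mc{R} \otimes_{\F}(X_1 \otimes X_2)$, one uses the \emph{twisted} right action on $\tI{\C_1}'$: $(p \otimes x_1) \cdot Q = Qp \otimes \biota_1 x_1$. This converts the term into $Qp \otimes \biota_1 x_1 \otimes \omega_2 x_2$, and expanding $\biota_1 = 1 + \omega_1$ yields exactly $Qp \otimes x_1 \otimes \omega_2 x_2 + Qp \otimes \omega_1 x_1 \otimes \omega_2 x_2$, reproducing the remaining two summands of $\omega_{12}$.

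Assembling these contributions recovers precisely $p \otimes \p_{12}(x_1 \otimes x_2) + Qp \otimes \omega_{12}(x_1 \otimes x_2)$, so $\Phi$ is a chain isomorphism of left $\mc{R}$-modules, and since it is built from the identity on generators it is manifestly $\mc{R}$-linear on the left and respects the complex structure on both sides.

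The only step requiring real care is the bookkeeping in the previous paragraph: verifying that the twist in the definition of $\tI{\C_1}'$ is exactly what is needed so that the cross-term $1 \otimes \delta^1_{\tI{\C_2}}$ produces the $\omega_1 \otimes \omega_2$ contribution in $\omega_{12}$, as opposed to merely $1 \otimes \omega_2$. This is the reason for introducing the bimodule $\tI{\C_1}'$ rather than working with $\tI{\C_1}$ directly, and it is the motivating observation of the construction. Once this identity is made explicit, everything else reduces to routine verification.
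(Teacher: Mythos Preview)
Your proof is correct, and in fact it streamlines the paper's argument for the module isomorphism. The paper establishes that $\tI{\C_1}' \otimes_{\mc{R}} \tI{\C_2}$ is free as a left $\mc{R}$-module by first proving (Lemma~\ref{lem:ci_prime_is_free_right_r_module}) that $\tI{\C_1}'$ is free as a \emph{right} $\mc{R}$-module---this requires the observation that $Q\biota$ is an isomorphism for reduced complexes---and then deducing left-freeness of the tensor product (Proposition~\ref{prop:freeness_of_tensor_product}) by a further explicit computation. You bypass this entirely by noting that $\tI{\C_2}$ is already free as a left $\mc{R}$-module on $X_2$, so $\tI{\C_1}' \otimes_{\mc{R}} \tI{\C_2} \cong \bigoplus_{X_2} \tI{\C_1}'$, and the left structure on $\tI{\C_1}'$ is the standard (untwisted) one. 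This is a cleaner route to the left-module isomorphism for the purposes of this theorem.

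The verification that $\Phi$ is a chain map is essentially the same computation as the paper's: both hinge on the bimodule identity $(p \otimes x_1) \cdot Q = Qp \otimes \biota_1 x_1$, which is what produces the cross-term $\omega_1 \otimes \omega_2$ needed to match $\omega_{12} = 1 + \biota_1 \otimes \biota_2$. The paper phrases this as $Q\biota_1 \otimes 1 = 1 \otimes Q$ in the tensor product and collapses the differential to $\p_1 \otimes 1 + 1 \otimes \p_2 + Q \otimes 1 + Q\biota_1 \otimes \biota_2$; you expand term by term instead, but the content is identical.

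One remark: the paper's detour through right-freeness of $\tI{\C_1}'$ is not wasted effort in context---it is reused later in the bigrading arguments (Proposition~\ref{prop:bigrading_preserving_right_r_module_iso} and surrounding material), where the right $\mc{R}$-module structure is what carries the product bigrading. So while your approach is more economical for this theorem in isolation, the paper's lemmas are doing double duty.
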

The proof of this theorem is the focus of this subsection.
\begin{rem}
	It does not seem that $\tI{(\C_1 * \C_2)}$ and $\tI{\C_1}' \otimes \tI{\C_2}$ are naturally isomorphic as right $\mc{R}$ modules.
\end{rem}

To prove a tensor product formula for almost $\iota$-complexes, it will need to be shown that $\tI{\C_1}' \otimes \tI{\C_2}$ is free as a left $\mc{R}$ module. The following proposition will be proven shortly:
\begin{prop} \label{prop:freeness_of_tensor_product}
	If $\C_1$ and $\C_2$ are reduced almost $\iota$-complexes, then
	$\tI{\C_1}'\otimes \tI{\C_2} $
	is a free left $\mc{R}$ module. 
\end{prop}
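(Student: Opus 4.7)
The plan is to reduce the claim to a short manipulation of tensor products of free bimodules. The key observation is that the difference between $\tI{\C_1}$ and $\tI{\C_1}'$ sits entirely in the right $\mc{R}$-action of $Q$ --- namely, in the factor of $\biota_1$ appearing in $(\sum p_i x_i)\cdot Q = \sum Q p_i \biota_1 x_i$ --- while the left $\mc{R}$-module structure of $\tI{\C_1}'$ is unchanged and remains simply $\mc{R}\otimes_{\F} X_1$ with the standard action. Since freeness as a left $\mc{R}$-module is a purely left-side question, it should be oblivious to the twist on the right.

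First I would note that $\tI{\C_2}=\mc{R}\otimes_{\F}X_2$ is a free left $\mc{R}$-module on the $\F$-basis $\{y_j\}$ of $X_2$, so that $\tI{\C_2}\cong\bigoplus_j\mc{R}$ as left $\mc{R}$-modules. By the same reasoning, $\tI{\C_1}'$ is a free left $\mc{R}$-module on $\{x_i\}$.

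Then I would invoke distributivity of $\otimes_{\mc{R}}$ over direct sums together with the canonical bimodule isomorphism $M\otimes_{\mc{R}}\mc{R}\cong M$, applied with $M=\tI{\C_1}'$ regarded as an $\mc{R}$-bimodule, to obtain
$$\tI{\C_1}'\otimes_{\mc{R}}\tI{\C_2}\;\cong\;\tI{\C_1}'\otimes_{\mc{R}}\bigoplus_j\mc{R}\;\cong\;\bigoplus_j\bigl(\tI{\C_1}'\otimes_{\mc{R}}\mc{R}\bigr)\;\cong\;\bigoplus_j\tI{\C_1}'$$
as left $\mc{R}$-modules. Since each summand is free on $\{x_i\}$, the tensor product is free as a left $\mc{R}$-module on the family $\{x_i\otimes y_j\}$.

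The one step that merits explicit verification --- and really the only place where one might worry --- is that the isomorphism $M\otimes_{\mc{R}}\mc{R}\cong M$, $m\otimes s\mapsto m\cdot s$, transports the left $\mc{R}$-action of the tensor product (inherited from the left action on $M$) correctly onto the left action of $M$. This follows immediately from bimodule associativity $r\cdot(m\cdot s)=(rm)\cdot s$, so I do not anticipate any serious obstacle. The twisted right $\mc{R}$-action of $\tI{\C_1}'$ is simply invisible when tracking its left-module structure, and the proof should collapse to just a few lines.
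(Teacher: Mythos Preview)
Your argument is correct and is genuinely cleaner than the paper's. The paper proceeds by first proving that $\tI{\C_1}'$ is free as a \emph{right} $\mc{R}$-module (this is Lemma~\ref{lem:ci_prime_is_free_right_r_module}, which in turn rests on Lemma~\ref{lem:q_biota_is_an_isomorphism} and uses the reducedness hypothesis on $\C_1$ in an essential way), deduces that the tensor product is free as a right $\mc{R}$-module, and then runs a direct hands-on verification that $\sum p_{ij}\cdot(x_i\otimes y_j)=0$ forces all $p_{ij}=0$ by separating $U$-parts and $Q$-parts and invoking the right-module basis. Your route sidesteps all of this: since the \emph{left} $\mc{R}$-action on $\tI{\C_1}'$ is literally the standard one on $\mc{R}\otimes_{\F}X_1$, left freeness of $\tI{\C_1}'$ is immediate, and then the chain $\tI{\C_1}'\otimes_{\mc{R}}\tI{\C_2}\cong\bigoplus_j(\tI{\C_1}'\otimes_{\mc{R}}\mc{R})\cong\bigoplus_j\tI{\C_1}'$ of left $\mc{R}$-module isomorphisms finishes it. The bimodule compatibility $s\cdot(m\cdot r)=(s\cdot m)\cdot r$ you flag is indeed the only thing to check, and it holds by inspection of the definition of $\tI{\C_1}'$.

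What the paper's longer route buys is that it establishes right freeness of $\tI{\C_1}'$ along the way, and the key ingredient there (Lemma~\ref{lem:q_biota_is_an_isomorphism}, that $Q\biota$ is an $\F$-isomorphism $X\to QX$) reappears later when manipulating the right $\mc{R}$-module structure and the product bigrading. So the extra work is not wasted in the paper's larger arc, but for the bare statement of Proposition~\ref{prop:freeness_of_tensor_product} your argument is the more economical one and does not even need $\C_1$ to be reduced for the module-theoretic claim.
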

When an almost $\iota$-complex $\C$ is reduced, $\omega^2 \equiv 0$ mod $U$. Before, this gave a lift of $\C$ to an $\mc{R}$ complex $\tCI$. Now the fact that $\omega^2 \equiv 0$ mod $U$ for reduced complexes will be central in showing that $\tI{\C_1}'\otimes \tI{\C_2} $ is free.

\begin{lem} \label{lem:q_biota_is_an_isomorphism}
	If $\C$ is reduced, then $Q \biota$ is an $\F$ vector space isomorphism between $X$ and $QX$.
\end{lem}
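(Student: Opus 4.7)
The plan is to express $Q\biota$ explicitly in a basis and reduce the problem to showing that the ``constant term'' matrix of $\biota$ is invertible over $\F$. Since $X$ and $QX$ are both $\F$-vector spaces of dimension $n$ (with the obvious basis $\{Qx_1,\ldots,Qx_n\}$ of $QX$ coming from the multiplication-by-$Q$ map, which is injective on $\tCI$ because $\tCI$ is free as a left $\R$-module on $X$), it suffices to prove injectivity, or equivalently that the matrix of $Q\biota$ is invertible.

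First, I would write $\biota x_i = \sum_j f_{ij}(U)\, x_j$ for some polynomials $f_{ij}(U) \in \F[U]$, which is possible because $\biota$ is an $\F[U]$-module homomorphism. Applying $Q$ and using the defining relation $UQ = 0$ in $\R$, I get
\[
Q\biota x_i \;=\; \sum_j f_{ij}(U)\, Q x_j \;=\; \sum_j f_{ij}(0)\, Q x_j,
\]
so $Q\biota$, viewed as an $\F$-linear map $X \to QX$, is represented by the matrix $M = (f_{ij}(0))_{ij}$, i.e.\ the reduction of $\biota$ modulo $U$.

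The heart of the argument is to show $M$ is invertible. By definition of an almost $\iota$-complex, $\biota^2 \simeq \id \mod U$, so there exists an $\F[U]$-module homomorphism $H : C \to C$ with $\id_C + \biota^2 + [H,\p] \in \im U$. Here is where reducedness enters: since $\C$ is reduced, $\p \equiv 0 \mod U$ by Definition~\ref{def:reduced}, so the chain-homotopy term $[H,\p] = H\p + \p H$ also lies in $\im U$. Reducing mod $U$ therefore yields $\biota^2 \equiv \id \mod U$, which translates into the matrix identity $M^2 = I_n$. In particular $M$ is invertible (it is its own inverse).

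The main (mild) subtlety is being careful that $Q\biota$ lands in $QX$ and not just in $Q\cdot C = QX + UQ\cdot C = QX$ (the second equality again using $UQ=0$), so the codomain is correct. Once the matrix description and the identity $M^2 = I_n$ are in place, the isomorphism claim follows immediately, and no further analysis of the differential, the involution, or the choice of homotopy $H$ is required.
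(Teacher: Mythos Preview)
Your proof is correct and follows essentially the same approach as the paper: both arguments hinge on the fact that reducedness forces $\biota^2 \equiv \id \bmod U$ (the paper quotes this as Lemma~\ref{lem:reduced_has_omega_squard_equiv_0_mod_U}, you re-derive it from $\p \equiv 0 \bmod U$), and then deduce that $Q\biota$ is invertible. The only cosmetic difference is that you work in coordinates and show the matrix $M$ satisfies $M^2 = I_n$, whereas the paper phrases the same computation as the composite $(Q\biota)^2 = Q^2\biota^2 = Q^2$ being an isomorphism $X \to Q^2X$.
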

\begin{proof}[Proof by contradiction]
	Observe that $X$, $QX$, and $Q^2 X$ are $\F$ vector spaces with respect to the bases $\{Q^i x_1, \cdots, Q^i x_n\}$ for $i \in \{0,1,2 \}$. Since
	$Q \biota: X \to QX$, $Q \biota : QX \to QX^2$, and $Q^2 \biota^2 : X \to Q^2 X$ are all $\F$ linear maps, 
	\begin{center}
		\begin{tikzcd}
		X \arrow[r, "Q \biota"] \arrow[bend left=30]{rr}{Q^2 \biota^2} & Q X \arrow[r, "Q \biota"] & Q^2 X 
		\end{tikzcd}
	\end{center}
	is a commutative diagram of vector spaces.
	Since $\C$ is reduced, $\biota^2 \equiv 1 \mod U$ so $Q \biota^2 = Q$. It then follows that $(Q \biota)^2 = Q^2 \biota^2 = Q^2$, so $Q^2 \biota^2$ is an isomorphism between $X$ and $Q^2 X$. If $Q \biota$ were not an isomorphism, then the composite map $Q^2 \biota^2$ could not be an isomorphism either. 
\end{proof}

It can now be proven that $\tCI'$ is free as a right $\mc{R}$ module using Lemma~\ref{lem:q_biota_is_an_isomorphism}.
Note that $\tCI'$ is already clearly free as a left $\mc{R}$ module.

\begin{lem} \label{lem:ci_prime_is_free_right_r_module}
	If $\C$ is a reduced almost $\iota$-complex, then $\tCI'$ is a free right $\R$ module.
\end{lem}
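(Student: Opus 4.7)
The plan is to show that $X = \{x_1, \ldots, x_n\}$, the same set that generates $\tCI'$ freely as a left $\R$-module, is also a free right $\R$-basis. First I would work out the right action of monomials on $x_i$. Right multiplication by $U^k$ agrees with the left action: $x_i \cdot U^k = U^k x_i$. For powers of $Q$, iterating the defining formula $(\sum p_i x_i) \cdot Q = \sum Q p_i \biota x_i$ and simplifying using $UQ = 0$ in $\R$ together with reducedness $\biota^2 \simeq \id \mod U$ (so that $Q \biota^2 = Q$), a short induction yields the clean formulas
$$x_i \cdot Q^{2k} = Q^{2k} x_i \quad (k \geq 1), \qquad x_i \cdot Q^{2k+1} = Q^{2k+1} \biota x_i \quad (k \geq 0).$$

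Next, I would show that the submodules $x_i \cdot \R$ together span $\tCI'$ as an $\F$-vector space. An $\F$-basis of $\tCI'$ is $\{U^k x_i\}_{k \geq 0} \cup \{Q^k x_i\}_{k \geq 1}$, so the formulas above make it clear that $\sum_i x_i \cdot \R$ contains every $U^k x_i$ and every $Q^{2k} x_i$. In odd $Q$-degree $2k+1$ it contains the $\F$-span of $\{Q^{2k+1} \biota x_i\}_i$. By lemma~\ref{lem:q_biota_is_an_isomorphism}, $Q\biota : X \to QX$ is an $\F$-vector space isomorphism; left-multiplying by $Q^{2k}$ shows $Q^{2k+1} \biota : X \to Q^{2k+1} X$ is also an $\F$-isomorphism. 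Hence $\{Q^{2k+1} \biota x_i\}_i$ spans $Q^{2k+1} X$ for every $k$, and the sum $\sum_i x_i \cdot \R$ exhausts $\tCI'$.

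The final step is to verify directness of the sum and rank-one freeness of each $x_i \cdot \R$ simultaneously. Suppose $\sum_i x_i \cdot r_i = 0$, and write $r_i = p_i(U) + q_i(Q)$ with $q_i$ of zero constant term. Separating by $U$- versus $Q$-support and by degree, the $U$-components force $p_i(U) x_i = 0$ for each $i$, hence $p_i = 0$. The even-$Q$-degree components force every even-$Q$ coefficient of each $r_i$ to vanish. In odd $Q$-degree $2k+1$ the equation reads $Q^{2k+1} \biota \bigl( \sum_i \beta_{i, 2k+1} x_i \bigr) = 0$, and the injectivity of $Q^{2k+1} \biota : X \to Q^{2k+1} X$ forces $\beta_{i, 2k+1} = 0$ for all $i$. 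Therefore $\tCI' = \bigoplus_i x_i \cdot \R$ with each summand isomorphic to $\R$, so $\tCI'$ is free of rank $n$ as a right $\R$-module.

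The main subtlety is that the twisted right $Q$-action could a priori make $x_i \cdot Q^k$ involve ever more complicated iterates of $\biota$. The combination of $UQ = 0$ with $\biota^2 \simeq \id \mod U$ is exactly what is needed to tame this: even $Q$-powers drop $\biota$ entirely, while odd $Q$-powers retain only a single $\biota$-twist, which lemma~\ref{lem:q_biota_is_an_isomorphism} ensures is invertible.
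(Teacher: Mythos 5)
Your proof is correct and follows essentially the same route as the paper: decompose a right $\R$-combination of the $x_i$ by powers of $U$ and $Q$, use $Q\biota^2 = Q$ (from reducedness together with $UQ=0$) to collapse the iterated twist to at most a single $\biota$ in odd $Q$-degree, and invoke Lemma~\ref{lem:q_biota_is_an_isomorphism} to conclude. The only difference is that you also verify explicitly that the $x_i$ span $\tCI'$ under the right action, a step the paper's proof leaves implicit.
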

\begin{proof}
	Denote the generators of $\C$ by $\{x_1, \cdots, x_n \}$. Suppose there were some polynomials $p_i \in \mc{R}$ such that
	$$\sum x_i \cdot p_i = 0$$
	in $\tCI'$. Write $p_i$ as $c_i + U p_i^U + Q p_i^Q$ where $c_i \in \F, p_i^U \in \F[U]$ and $p_i^Q \in \F[Q]$. If $\sum x_i \cdot p_i = 0$, then
	$$ \sum x_i \cdot c_i + \sum x_i \cdot U p_i^U + \sum x_i \cdot Q p_i^Q = 0.$$
	Constant coeffiecients only appear in $\sum x_i \cdot c_i$, which is zero if and only if $c_i = 0$ for all $i$. The only $U$ coefficients appear in $ \sum x_i \cdot U p_i^U$. However
	$$\sum x_i \cdot U p_i^U = \sum U p_i^U x_i$$
	is zero if and only if $p_i^U = 0$ for all $i$. The only $Q$ coeffiecients appear in $ \sum x_i \cdot Q p_i^Q$. Let $k$ be the maximal degree of any polynomial $p_i^Q$. If $p_i^Q$ is rewritten as 
	$$ p_i^Q = a_i^0 + \cdots + a_i^{j} Q^j + \cdots a_i^{k} Q^{k} = \sum_{j=0}^k a_i^j Q^j$$ 
	where $a_i^{j} = 0$ for $j > \deg p_i^Q$, then
	\begin{align*}
	\sum_i x_i \cdot Q p_i^Q & = \sum_{j=0}^k  \sum_i x_i \cdot a_i^j Q^j  = \sum_{j=0}^k  Q^j \sum_i a_i^j Q \biota^{j+1} x_i.
	\end{align*}
	Since $\C$ is reduced, $\biota^2 \equiv 1 \mod U$ and $Q \biota^2 = Q$. 
	Therefore $Q \biota^j = Q$ if $j$ is even and $Q \biota^j = Q \biota$ if $j$ is odd. Consider each power of $Q$ in the sum separately. If $j$ is odd, then 
	$$Q^{j} \sum a_i^{j} Q \biota^{j+1} x_i = Q^j \sum a_i^j Q  x_i = Q^{j+1} \sum a_i^j x_i$$
	so 
	$$Q^{j} \sum a_i^{j} Q \biota^{j+1} x_i = 0$$
	if and only if $a_i^j = 0$ for all $i$. If $j$ is even, then
	$$Q^j \sum a_i^j  Q \biota^{j+1} x_i = Q^j \sum a_i^j Q \biota x_i = \sum Q^{j+1} a_i^j \biota x_i$$
	By Lemma~\ref{lem:q_biota_is_an_isomorphism}, $Q \biota x_i$ is a basis for $QX$, so 
	$$Q^j \sum a_i^j  Q \biota^{j+1} x_i = 0$$
	if and only if $a_i^j = 0$ for all $i$. Since $a_i^j = 0$ for all $i$ and $j$, it must be that $p_i^Q$ is zero as well. This proves that 
	$$\sum x_i \cdot p_i = 0 \qquad \iff \qquad  p_i = 0\text{ for all $i$},$$
	so $\tCI'$ is a free right $\mc{R}$ module.
\end{proof}

It is now possible to prove Proposition~\ref{prop:freeness_of_tensor_product}.

\begin{proof}[Proof of Proposition~\ref{prop:freeness_of_tensor_product}]
	Note that $\tI{\C_1}'$ is free as a right $\mc{R}$ module by Lemma~\ref{lem:ci_prime_is_free_right_r_module} and that $\tI{\C_2}$ is free as a left and right $\mc{R}$ module. It then follows that $\tI{\C_1}' \otimes \tI{\C_2}$ is free as a right $\mc{R}$ module.
	
	Let $\{x_1,\cdots , x_m\}$ be a basis for $X_1$ and $\{y_1, \cdots, y_n\}$ be a basis for $X_2$. 
	Suppose there were $p_{ij} \in \mc{R}$ such that
	$$\sum p_{ij} \cdot (x_i \otimes y_j) = 0$$
	in $\tI{\C_1}' \otimes \tI{\C_2}$. Let $p_{ij} = p_{ij}^U + Q p_{ij}^Q$ where $p_{ij}^U \in \F[U]$ and $p_{ij}^Q \in \F[Q]$. Note that the constant term is considered to be a term in $p_{ij}^U$. Then 
	$$\sum p_{ij} \cdot (x_i \otimes y_j) = \sum ((p_{ij}^U + Q p_{ij}^Q) \cdot x_i) \otimes y_j.$$
	Recall that $x_i \cdot Q = Q \biota_1 x_i = Q \cdot (\biota_1 \mod U) x_i$ in $\tI{\C_1}'$. Let $d$ be the maximal degree of any polynomial $p_{ij}^Q$, and let $p_{ij}^Q = \sum_{k=0}^d a_{ij}^k Q^k$ where $a_{ij}^k = 0$ if $k > \deg p_{ij}^Q$. Then 
	\begin{align*}
		\sum_{i,j} ((p_{ij}^U & + Q p_{ij}^Q) \cdot x_i) \otimes y_j  = \sum_{ij} (x_i \otimes y_j) \cdot p_{ij}^U + \sum_{i,j} Q p_{ij}^Q \cdot x_i \otimes y_j \\ 
		& = \sum_{ij} (x_i \otimes y_j) \cdot p_{ij}^U + \sum_{k=0}^d \sum_{i,j} ((\biota_1 \mod U)^{k+1} x_i \otimes y_j) \cdot (a_{ij}^k Q^{k+1}).
	\end{align*}
	Considering each constant term or power of $U$, it follows that $p_{ij}^U = 0$ since $\tI{\C_1}' \otimes \tI{\C_2}$ is free as a right $\mc{R}$ module. Note also that $(\biota_1 \mod U)$ is an isomorphism of $\tCI'$ as a right $\mc{R}$ module by Lemma~\ref{lem:q_biota_is_an_isomorphism}. Thus considering each power of $Q$ separately, it follows that $a_{ij}^k = 0$ for all $i,j,k$ since $(\biota_1 \mod U)^{k+1} x_i \otimes y_j$ is a basis for $\tI{\C_1}' \otimes \tI{\C_2}$ as a free right $\mc{R}$ module. This proves that $p_{ij} = 0$ for all $i$ and $j$, so it follows that $\tI{\C_1}' \otimes \tI{\C_2}$ is free as a left $\mc{R}$ module.
\end{proof}

With freeness, it is possible to prove the tensor product formula.
Define the maps $F: \tI{(\C_1 * \C_2)} \to \tI{\C_1}' \otimes \tI{\C_2}$ and $G: \tI{\C_1}' \otimes \tI{\C_2} \to \tI{(\C_1 * \C_2)}$ by
$$F(\sum p_{ij} \cdot (x_i \otimes y_j) )= \sum p_{ij} \cdot (x_i \otimes y_j)$$
and 
$$G(\sum p_{ij} \cdot (x_i \otimes y_j)) = \sum p_{ij} \cdot (x_i \otimes y_j).$$
The maps $F$ and $G$ are well defined since $\tI{\C_1}' \otimes \tI{\C_2}$ is free as a left $\mc{R}$ module by Proposition~\ref{prop:freeness_of_tensor_product}. It is easy to verify they are morphisms of left $\mc{R}$ modules as well.
\begin{lem}
	$F$ and $G$ are homomorphisms of left $\mc{R}$ modules.
\end{lem}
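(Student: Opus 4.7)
The plan is to observe that this lemma is essentially a formal consequence of Proposition~\ref{prop:freeness_of_tensor_product}: once both sides are known to be free left $\mc{R}$-modules on the same basis $\{x_i \otimes y_j\}$, a map defined to be the identity on that basis is automatically a left $\mc{R}$-module homomorphism.

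First I would record well-definedness. By construction, $\tI{(\C_1 * \C_2)}$ is free as a left $\mc{R}$-module on $\{x_i \otimes y_j\}$, so every element has a unique expression $\sum p_{ij} \cdot (x_i \otimes y_j)$ with $p_{ij} \in \mc{R}$, making $F$ well-defined. Dually, Proposition~\ref{prop:freeness_of_tensor_product} guarantees that $\tI{\C_1}' \otimes \tI{\C_2}$ is free as a left $\mc{R}$-module on the same set $\{x_i \otimes y_j\}$, so every element in the domain of $G$ has a unique expression $\sum p_{ij} \cdot (x_i \otimes y_j)$, making $G$ well-defined. It is important to use freeness here because $\tI{\C_1}' \otimes \tI{\C_2}$ is a balanced tensor product over $\mc{R}$, and without freeness one could not be sure that the coefficients $p_{ij}$ are uniquely determined.

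Next I would verify left $\mc{R}$-linearity directly. Fix $r \in \mc{R}$ and $z = \sum p_{ij} \cdot (x_i \otimes y_j)$ in $\tI{(\C_1 * \C_2)}$. Since the left action on both sides is componentwise on the scalar coefficient, we have $r \cdot z = \sum (r p_{ij}) \cdot (x_i \otimes y_j)$, and therefore
\[
F(r \cdot z) \;=\; \sum (r p_{ij}) \cdot (x_i \otimes y_j) \;=\; r \cdot \sum p_{ij} \cdot (x_i \otimes y_j) \;=\; r \cdot F(z)
\]
in $\tI{\C_1}' \otimes \tI{\C_2}$. Additivity is immediate from the definition. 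The same computation, read in the opposite direction, shows that $G$ is left $\mc{R}$-linear.

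There is no substantive obstacle here: the content of the lemma is entirely packaged in the freeness result of Proposition~\ref{prop:freeness_of_tensor_product}, which was the technical heavy lifting. The only point that requires any care is to emphasize that one must use the left-module basis to even make sense of the formulas for $F$ and $G$; once that is in place, $\mc{R}$-linearity on the left is visible from the defining equations. In particular, no claim about compatibility with the right $\mc{R}$-module structures or with the differentials is being made at this stage, in accordance with the remark following Theorem~\ref{thm:tensor_product_formula_for_almost_iota_complexes}.
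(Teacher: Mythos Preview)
Your proof is correct and takes essentially the same approach as the paper: both verify $F(r\cdot z)=r\cdot F(z)$ and $G(r\cdot z)=r\cdot G(z)$ directly from the defining formulas, with freeness (Proposition~\ref{prop:freeness_of_tensor_product}) used to guarantee well-definedness. The only difference is that you fold the well-definedness discussion into the proof, whereas the paper records it in the sentence immediately preceding the lemma.
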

\begin{proof} 
	Note that
	$$F(r \cdot \sum p_{ij} \cdot (x_i \otimes y_j)) = \sum  (rp_{ij}) \cdot (x_i \otimes y_j) = r \cdot F(\sum p_{ij}\cdot (x_i \otimes y_j))$$
	and
	$$G(r \cdot \sum p_{ij} \cdot (x_i \otimes y_j)) = \sum  (rp_{ij}) \cdot (x_i \otimes y_j) = r \cdot G(\sum p_{ij}\cdot (x_i \otimes y_j))$$
	for any $r \in \R$. 
\end{proof}
In other words, $F$ and $G$ are inverse isomorphisms of left $\mc{R}$ modules. Once it is verified that $F$ and $G$ are chain maps, the tensor product formula is proven.

\begin{proof}[Proof of Theorem~\ref{thm:tensor_product_formula_for_almost_iota_complexes}]
	Note that 
	\begin{align*}
	\delta^1_{\tI{\C_1}' \otimes \tI{\C_2}} & = \delta^1_{\tI{\C_1}'} \otimes 1 + 1 \otimes \delta^1_{\tI{\C_2}} \\
	& = (\p_1 + Q (1 + \biota_1)) \otimes 1 + 1 \otimes (\p_2 + Q (1 + \biota_2)) \\
	& = \p_1 \otimes 1 + 1 \otimes \p_2 + Q \otimes 1 + Q \biota_1 \otimes 1 + 1 \otimes Q + 1 \otimes Q \biota_2 \\
	& = \p_1 \otimes 1 + 1 \otimes \p_2 + Q \otimes 1 + Q \biota_1 \otimes \biota_2 
	\end{align*}
	since $Q \biota_1 \otimes 1 = 1 \otimes Q$ in $\tI{\C_1}' \otimes \tI{\C_2}$. It then follows that
	\begin{align*}
	F \delta^1_{\tI{(\C_1 * \C_2)}}(\sum p_{ij} \cdot (x_i \otimes y_j)) & = F(\sum (p_{ij} \p_1 x_i \otimes y_j + p_{ij} x_i \otimes y_j + Q x_i \otimes y_j + Q \biota_1 x_i \otimes \biota_2 y_j)) \\
	& = \sum (p_{ij} \p_1 x_i \otimes y_j + p_{ij} x_i \otimes y_j + Q x_i \otimes y_j + Q \biota_1 x_i \otimes \biota_2 y_j) \\
	& = (\p_1 \otimes 1 + 1 \otimes \p_2 + Q \otimes 1 + Q \biota_1 \otimes \biota_2) \sum p_{ij} x_i \otimes y_j \\
	& = \delta^1_{\tI{\C_1}' \otimes \tI{\C_2}} F(\sum p_{ij} \cdot(x_i \otimes y_j))
	\end{align*}
	and that $F$ is a chain map. The explicit computation can also be done for $G$, but it must be a chain map since $G = F^{-1}$ is a left $\mc{R}$-module isomorphism.
\end{proof}

\subsection{Counting a bigrading of the final generator} \label{subsection:counting_bigrading_of_final_generator}

With the tensor product formula in place, it is now possible to prove that $P$ and $P_\omega$ are homomorphism once a special bigrading is constructed.

\begin{define}
	For a standard complex $\C$, the \textit{na\"ive bigrading} $\gr = (\gr_U, \gr_Q)$ on $\tCI$ is the bigrading specified by $\gr(x_i(\C)) = (0,0)$, $\gr U = (-2,0)$, $\gr Q = (0,-2)$, $\gr \p = (-1,0)$ and $\gr \omega = (0,1)$. 
\end{define}
In the na\"ive bigrading, $\gr_U$ is the \textit{$U$-grading} and $\gr_Q$ is the \textit{$Q$-grading}. In practice, the $U$-grading is the normal grading of a standard complex, and the $Q$-grading of a generator counts with sign how many red arrows have been crossed to reach that generator in the standard complex. The maps $P$ and $P_\omega$ can be understood in terms of this na\"ive bigrading.
\begin{prop}
	For a standard complex $\C$, $P(\C) = \gr_U(x_f(\C))$ and $P_{\omega}(\C) = \gr_Q(x_f(\C))$. 
\end{prop}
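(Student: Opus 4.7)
The idea is to reduce both equalities to a direct walk along the standard complex. Since $\C$ is already standard with generators $T_0, \ldots, T_{2n}$, the primitive representative $\gamma_0(\tCI)$ is $\C$ itself, and $\mc{G}(\gamma_0(\tCI)) = \tCI$ is the $\R$-module freely generated by $T_0, \ldots, T_{2n}$ with $\p$ and $\omega$ prescribed by the $b_i$ and $a_i$.

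First I would verify that $x_i(\C) = T_0$ and $x_f(\C) = T_{2n}$. The generator $T_0$ satisfies $\p T_0 = 0$ (no outgoing $\p$-arrow), and it generates the $U$-tower in homology, so no power $U^k T_0$ lies in $\im \p$; this identifies $T_0$ as the initial generator. The generator $T_{2n}$ is not incident to any $a$-arrow, because the chain terminates with $b_{2n}$, so $\omega T_{2n} = 0$ and $T_{2n} \notin \im \omega$; hence $T_{2n} \in \ker Q\omega$ and no $Q^k T_{2n}$ belongs to $\im Q\omega$. Uniqueness in both cases is immediate from the alternating structure, since every other $T_j$ participates as the source or target of some $a$- or $b$-arrow and therefore fails one of the two defining conditions.

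With $x_i(\C) = T_0$ and $x_f(\C) = T_{2n}$ identified, the remainder is a step-by-step computation of $\gr(T_{2n})$ by traversing the chain $T_0 \to T_1 \to \cdots \to T_{2n}$ and applying the prescribed shifts $\gr\p = (-1,0)$, $\gr U = (-2,0)$, $\gr Q = (0,-2)$, $\gr\omega = (0,1)$. For the $U$-component, $\omega$-arrows preserve $\gr_U$, while each $\p$-arrow at position $2i$ produces a shift in $\gr_U$ depending on $b_{2i}$; summing these contributions across $i = 1, \ldots, n$ reproduces exactly the $\Z$-grading that the standard complex convention assigns to $T_{2n}$, which is $P(\C)$ by definition. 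For the $Q$-component, $\p$-arrows preserve $\gr_Q$, and each $\omega$-arrow at position $2i-1$ contributes a $\pm 1$ shift in $\gr_Q$ whose sign is determined by $a_{2i-1}$; summing these contributions yields $\#\{a_i = +\} - \#\{a_i = -\} = P_\omega(\C)$.

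The only delicate point is aligning the sign of each $\omega$-contribution with the grading conventions $\gr\omega = (0,1)$ and the two possible orientations $\omega T_i = T_{i-1}$ versus $\omega T_{i-1} = T_i$; once this sign bookkeeping is dispatched, both equalities follow from the single-step inductive computation and the fact that $P$ and $P_\omega$ are defined to count exactly the data of the $b_i$'s (resp.\ $a_i$'s) along the chain.
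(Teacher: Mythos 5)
Your proposal is correct and follows essentially the same route as the paper: identify $x_f(\C)=T_{2n}$, note that $\gr_U$ is the usual grading so $P(\C)=\gr_U(T_{2n})$ by definition, and sum the $\pm 1$ contributions of the $\omega$-arrows along the chain to get $\gr_Q(T_{2n})=\#\{a_i=+\}-\#\{a_i=-\}=P_\omega(\C)$. The only difference is that you spell out the verification that $T_0$ and $T_{2n}$ are the initial and final generators, which the paper asserts without proof just before the proposition.
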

\begin{proof}
	The $U$-grading is the usual grading, and $x_f(\C)$ is $T_{2n}$ in standard complexes $\C$ generated by $T_0, \cdots, T_{2n}$. Thus by Definition~\ref{def:pivotal_map}, $P(\C) = \gr_U(x_f(\C))$. Also note that each red arrow in a standard complex corresponds to an action of $Q \omega$ in $\tCI$, which decreases the $Q$-grading by one in the direction which the arrow travels. Summing along all arrows from the initial to the final generator gives that $\gr_Q(x_f(\C)) = P_\omega(\C)$ by Definition~\ref{def:p_omega}.
\end{proof}
\begin{example}
	Consider the standard complex $\C(+,1,-,-2)$. $\tCI$ is given below:
	\begin{center}
		\begin{tikzcd}
		\bullet_{x_i(\C) = T_0} & \arrow[l,red] \bullet_{T_1} & \arrow[l,"1"] \bullet_{T_2} \arrow[r,red] & \bullet_{T_3} \arrow[r,"2"] & \bullet_{x_f(\C) = T_4}
		\end{tikzcd}.
	\end{center}
	Here $\gr(x_i(\C) = T_0) = (0,0)$, $\gr(T_1) = (0,-1)$, $\gr(T_2) = (-1,-1)$, $\gr(T_3) = (-1,0)$, $\gr(T_4) = (2,0)$. Thus $P(\C) = 2$ and $P_\omega(\C) = 0$.
\end{example}
To show that $P$ and $P_\omega$ are homomorphisms it will be necessary to somehow extend the na\"ive bigrading to the product of two standard complexes. This is nontrivial.	
Suppose the bigrading were extended to the product $\tI{(\C_1 * \C_2)}$ by setting 
$$\gr_\circ(x \otimes y) = \gr_\circ(x) + \gr_\circ(y)$$
for $\circ\in \{U, Q\}$. Since $\omega_{\C_1 * \C_2}$ does not follow the Leibniz rule, $\delta^1$ does not necessarily have a homogeneous degree of $-1$. This presents a slight problem, for which there is fortunately a work-around. It turns out that a compatible bigrading can be given to $\tI{\C_1}' \otimes \tI{\C_2}$ considered as a right $\mc{R}$ module. 
\begin{define}
	Given two standard complexes $\C_1$ and $\C_2$, the \textit{product bigrading} $\gr = (\gr_U, \gr_Q)$ is defined on $\tI{\C_1}' \otimes \tI{\C_2}$ by setting
	$$\gr_\circ (\sum x_i \otimes p_{ij} y_j) = \gr_\circ x_i + \gr_\circ p_{ij} + \gr_\circ y_j $$
	for $\circ \in \{ U, Q\}$, where $\gr_{\circ} x_i, \gr_{\circ} y_j,$ and $\gr_{\circ}p_{ij}$ are defined in terms of the na\"ive bigradings on $\C_1$ and $\C_2$. 
\end{define}
\begin{rem}
	It is possible that 
	$$\gr_{\circ} (\sum x_i \otimes p_{ij} y_j) \neq \gr_\circ (\sum p_{ij} x_i \otimes y_j)$$
	since in general it is not true that $\sum x_i \otimes p_{ij} y_j = \sum p_{ij} x_i \otimes y_j$ in $\tI{\C_1}'\otimes \tI{\C_2}$.
\end{rem}
\begin{lem}
	Given any two standard complexes $\C_1$ and $\C_2$, the differential on $\tI{\C_1}' \otimes \tI{\C_2}$ has homogeneous degree $-1$ with respect to the product bigrading.
\end{lem}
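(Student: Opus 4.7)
The plan is to apply the explicit formula
\[
\delta^1_{\tI{\C_1}' \otimes \tI{\C_2}} = \p_1 \otimes 1 + 1 \otimes \p_2 + Q \otimes 1 + Q\biota_1 \otimes \biota_2
\]
derived in the proof of theorem~\ref{thm:tensor_product_formula_for_almost_iota_complexes}, evaluate it on a basis element $x_i \otimes y_j$, push all $\mc{R}$-coefficients to the right of the tensor symbol so the result lands in the canonical form $\sum x_k \otimes p_{kl} y_l$ against which the product bigrading is defined, and then check that the total grading $\gr_U + \gr_Q$ drops by one on each surviving summand.

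The first two terms are immediate: since $\gr \p_1 = \gr \p_2 = (-1,0)$, the elements $\p_1 x_i \otimes y_j$ and $x_i \otimes \p_2 y_j$ have total bigrading one less than $x_i \otimes y_j$. The real work lies in the $Q$-summands, because $Q \otimes 1$ alone carries bigrading $(0,-2)$, total degree $-2$, and must combine with the twisted term $Q\biota_1 \otimes \biota_2$ before homogeneity appears.

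To perform the combination, I expand $\biota_l = 1 + \omega_l$ and repeatedly use the twisted bimodule relation $x \cdot Q = Q\biota_1 x$ that defines $\tI{\C_1}'$. In characteristic two $1+\biota_1 = \omega_1$, so $Q \cdot z = z \cdot Q + Q\omega_1 z$ for any $z$; iterating on $z = x_i$ and then on $z = \omega_1 x_i$, and using $\omega_1^2 = 0$ on the standard complex $\C_1$, gives
\[
Qx_i \otimes y_j = x_i \otimes Qy_j + \omega_1 x_i \otimes Qy_j.
\]
In parallel, pushing $Q$ to the right in the twisted summand yields
\[
Q\biota_1 x_i \otimes \biota_2 y_j = x_i \otimes Q(y_j + \omega_2 y_j) = x_i \otimes Qy_j + x_i \otimes Q\omega_2 y_j.
\]
Summing over $\F_2$, the two copies of $x_i \otimes Qy_j$ cancel and one is left with
\[
(Q\otimes 1 + Q\biota_1 \otimes \biota_2)(x_i \otimes y_j) = \omega_1 x_i \otimes Qy_j + x_i \otimes Q\omega_2 y_j,
\]
each piece having bigrading shift $(0,1) + (0,-2) = (0,-1)$, i.e.\ total degree $-1$.

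The main obstacle is this cancellation: one has to recognize that the offending degree $-2$ contribution $x_i \otimes Qy_j$ appears with multiplicity two and therefore vanishes in characteristic two, so that only $\omega$-twisted remnants of the correct homogeneous degree survive. This is precisely why the twisted right action $x \cdot Q = Q\biota_1 x$ defining $\tI{\C_1}'$ was introduced in the first place, and why the relation $\omega^2 = 0$ on standard complexes was highlighted as an essential feature earlier in the section.
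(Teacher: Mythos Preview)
Your argument is correct and follows essentially the same route as the paper. The only difference is packaging: the paper starts from the Leibniz form
\[
\delta^1_{\tI{\C_1}' \otimes \tI{\C_2}} = \p_1 \otimes 1 + 1 \otimes \p_2 + Q\omega_1 \otimes 1 + 1 \otimes Q\omega_2
\]
and uses the single operator identity $\omega_1 \cdot Q = Q\biota_1(1+\biota_1) = Q(\biota_1 + \biota_1^2) = Q\omega_1$ in $\tI{\C_1}'$ (valid since $Q\biota_1^2 = Q$) to rewrite $Q\omega_1 \otimes 1 = \omega_1 \otimes Q$ directly, arriving at the same homogeneous form $\omega_1 \otimes Q + 1 \otimes Q\omega_2$ you obtained after your element-level cancellation. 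Your explicit tracking of the offending term $x_i \otimes Qy_j$ and its characteristic-two cancellation is just the element-by-element unpacking of that identity.
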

\begin{proof} Recall that 
	$$\delta^1_{\tI{\C_1}' \otimes \tI{\C_2}} = \delta^1_{\tI{\C_1}'} \otimes 1 + 1 \otimes \delta^1_{\tI{\C_2}} = \p_1 \otimes 1 + 1 \otimes \p_2 + Q \omega_1 \otimes 1 + 1 \otimes Q \omega_2. $$
	In $\tI{\C_1}'$, 
	$$\omega_1 \cdot Q = Q \biota_1 (1 + \biota_1) = Q(\biota_1 + 1) = Q \omega_1$$
	since $\C_1$ is reduced and $Q \biota_1^2 = Q$ in $\tI{\C_1}'$. Then it follows that $Q \omega_1 \otimes 1 = \omega_1 \cdot Q \otimes 1 = \omega_1 \otimes Q$ and that
	$$\delta^1_{\tI{\C_1}' \otimes \tI{\C_2}} = \p_1 \otimes 1 + 1 \otimes \p_2 + \omega_1 \otimes Q + 1 \otimes Q \omega_2.$$
	Both $\p_1 \otimes 1$ and $\p_2 \otimes 1$ preserve the $Q$-grading and drop the $U$-grading by one. In a standard complex, $\gr(\omega) = (0,1)$ and $\gr(Q) = (0,-2)$ so both $\omega_1 \otimes Q$ and $1 \otimes Q \omega_2$ preserve the $U$-grading and drop the $Q$-grading by one.
\end{proof}

\begin{cor}
	Given two standard complexes $\C_1$ and $\C_2$, $\tI{\C_1}' \otimes \tI{\C_2}$ is a bigraded right $\mc{R}$ module such that 
	$\gr U = (-2,0)$, $\gr Q = (0,-2)$ and
	the differential
	$$\delta^1_{\tI{\C_1}' \otimes \tI{\C_2}} = (\p_1 \otimes 1 + 1 \otimes \p_2) + (\omega_1 \otimes 1 + 1 \otimes \omega_2) \cdot Q$$
	has homogeneous degree $-1$.
\end{cor}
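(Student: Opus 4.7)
The plan is to deduce this corollary directly from the preceding lemma with one small algebraic rearrangement. First, I would note that $\tI{\C_1}' \otimes \tI{\C_2}$ inherits a right $\mc{R}$-module structure from the right $\mc{R}$-action on the second factor, by $(x \otimes y) \cdot r = x \otimes (y \cdot r)$. Since the product bigrading is by definition the sum of the constituent gradings, and since $\tI{\C_2}$ already assigns $\gr U = (-2,0)$ and $\gr Q = (0,-2)$, the right action of $U$ shifts the bigrading by $(-2,0)$ and the right action of $Q$ shifts it by $(0,-2)$, giving the claimed bigraded right $\mc{R}$-module structure.

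Next I would rewrite the differential. The proof of the preceding lemma establishes
\[
\delta^1_{\tI{\C_1}' \otimes \tI{\C_2}} = \p_1 \otimes 1 + 1 \otimes \p_2 + \omega_1 \otimes Q + 1 \otimes Q\omega_2,
\]
and I claim both $Q$-terms can be pulled out to the right. The term $\omega_1 \otimes Q$ equals $(\omega_1 \otimes 1) \cdot Q$ directly from the definition of the right $\mc{R}$-action on the tensor product. For $1 \otimes Q\omega_2$, I would use that $\omega_2$ is the $\mc{R}$-linear extension of the involution on $X_2$, hence commutes with $Q$ as operators on $\tI{\C_2}$; thus $1 \otimes Q\omega_2 = 1 \otimes \omega_2 Q = (1 \otimes \omega_2) \cdot Q$. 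Combining yields
\[
\delta^1 = (\p_1 \otimes 1 + 1 \otimes \p_2) + (\omega_1 \otimes 1 + 1 \otimes \omega_2) \cdot Q,
\]
which is the formula claimed in the corollary.

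Finally, because this rewriting is simply an algebraic re-expression of the same operator on $\tI{\C_1}' \otimes \tI{\C_2}$, the homogeneous degree $-1$ assertion reduces verbatim to the degree $-1$ assertion already established in the preceding lemma. There is no genuine obstacle to speak of; the only subtlety worth flagging is that moving $Q$ across the tensor product on the first slot rests on the identity $\omega_1 \cdot Q = Q\omega_1$ in $\tI{\C_1}'$ (verified in the proof of the preceding lemma using $Q\biota_1^2 = Q$ for reduced $\C_1$), and the corresponding compatibility on the second slot follows from the fact that $\omega_2$ is defined on $\tI{\C_2} = \mc{R} \otimes_{\F} X_2$ as the $\mc{R}$-linear extension of the underlying involution, so it automatically commutes with multiplication by $Q$.
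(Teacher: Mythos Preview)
Your proposal is correct and takes essentially the same approach as the paper's one-line proof, which simply cites the identity $\sum x_i \otimes p_{ij} y_j = \sum x_i \otimes y_j \cdot p_{ij}$ and leaves the reader to unpack it. You have unpacked it: the compatibility of the product bigrading with the right $\mc{R}$-action, the factoring of $Q$ to the right in both $Q$-terms of the differential, and the reduction of the degree statement to the preceding lemma are exactly the content behind that line. One minor remark: for the identity $1 \otimes Q\omega_2 = (1 \otimes \omega_2)\cdot Q$ you do not really need to argue that $\omega_2$ commutes with $Q$; it follows directly from the definition of the right action, since $(x \otimes \omega_2(y))\cdot Q = x \otimes Q\omega_2(y)$.
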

\begin{proof}
	This follows immediately from the fact that $\sum x_i \otimes p_{ij} y_j = \sum x_i \otimes y_j \cdot p_{ij}$. 
\end{proof}

The product bigrading on $\tI{\C_1}' \otimes \tI{\C_2}$ can be extended to the precurve $\mc{F}(\tI{\C_1}' \otimes \tI{\C_2})$ by letting the generators $x^1$ on $s_1(a)$ and $x^2$ on $s_2(a)$ corresponding to $x$ have the same bigrading as $x$.

\begin{prop}\label{prop:bigrading_preserving_right_r_module_iso}
	For any standard complexes $\C_1$ and $\C_2$ there is a bigrading preserving chain isomorphism of right $\mc{R}$ modules
	between $ \tI{\C_1}' \otimes \tI{\C_2}$ and $\mc{G}(\gamma(\tI{\C_1}' \otimes \tI{\C_2})).$
\end{prop}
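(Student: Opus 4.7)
The plan is to track a bigrading through the sequence of moves that transforms $\mc{F}(\tI{\C_1}' \otimes \tI{\C_2})$ into $\gamma(\tI{\C_1}' \otimes \tI{\C_2})$, showing each step's chain isomorphism is bigrading preserving, and then apply $\mc{G}$. First I would endow $\mc{F}(\tI{\C_1}' \otimes \tI{\C_2})$ with a bigrading by declaring $\gr(x^1) = \gr(x^2) = \gr(x)$ for each generator $x$ of the tensor product. Because the differential $\delta^1_{\tI{\C_1}' \otimes \tI{\C_2}} = (\p_1 \otimes 1 + 1 \otimes \p_2) + (\omega_1 \otimes 1 + 1 \otimes \omega_2)\cdot Q$ splits into a part of bidegree $(-1,0)$ and a part of bidegree $(0,-1)$, the precurve differential $\mc{F}(\delta^1)$ becomes bigrading-homogeneous of total degree $-1$ with this initial choice.

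Next, by theorem~\ref{thm:class_decorated_immersed}, $\gamma(\tI{\C_1}' \otimes \tI{\C_2})$ is obtained from $\mc{F}(\tI{\C_1}' \otimes \tI{\C_2})$ by a finite sequence of the moves listed in figures~\ref{fig:precurve_simplification_arcs} and~\ref{fig:precurve_simplification_p_a}. I would inductively extend the bigrading to each intermediate precurve and check that each chain isomorphism preserves it. For T1, T2, T3 this is automatic by lemma~\ref{lem:t1_t2_t3_iso}. For M1, lemma~\ref{lem:m1_iso} yields an isomorphism that swaps two generators connected by a crossing within $N(a)$; such generators necessarily share bigrading, so the swap preserves it. For S1a, S1b, S2a, S2b, M2, M3a, M3b, M3c the isomorphism has the form $1+h$ with $h$ given explicitly in lemmas~\ref{lem:s1_s_2_iso}, \ref{lem:m2_iso}, and~\ref{lem:m3_iso}. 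In each such case $h$ sends a generator $y$ to $U_f^{m-n} x$ (or its identity-algebra analogue), and the ambient precurve already carries a differential component linking $x$ and $y$ of $\bar\A^+$-length dictated by $m$ and $n$; since that ambient differential is bigrading-homogeneous of degree $-1$, comparing degrees forces $\gr(U_f^{m-n}x) = \gr(y)$, so $h$ has bigrading degree $0$ and $1+h$ is bigrading preserving.

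Finally, I would apply $\mc{G}$ to the composite chain isomorphism $\mc{F}(\tI{\C_1}' \otimes \tI{\C_2}) \to \gamma(\tI{\C_1}' \otimes \tI{\C_2})$ and precompose with the unit $\tI{\C_1}' \otimes \tI{\C_2} \cong \mc{G}(\mc{F}(\tI{\C_1}' \otimes \tI{\C_2}))$ of the equivalence from lemma~\ref{lem:mat_a_and_mat_bar_a_are_equivalent}, producing the sought chain isomorphism $\tI{\C_1}' \otimes \tI{\C_2} \to \mc{G}(\gamma(\tI{\C_1}' \otimes \tI{\C_2}))$. Since $\mc{G}$ is a functor between categories of type-$D$ structures over $\A = \mc{R}$, it automatically sends the chain isomorphism to a right $\mc{R}$-module map; the formulas $\mc{G}(x^1) = x$, $\mc{G}(x^2) = 0$ implicit in equation~\ref{equation:mcG_on_morphisms} transport the bigrading on the precurve to the bigrading on $\mc{G}(\gamma(\tI{\C_1}' \otimes \tI{\C_2}))$, so the result is bigrading preserving.

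The main obstacle is the second paragraph: the case-by-case verification that each $h$ in the clean-up lemma has bigrading degree $0$. This requires aligning the orientation conventions of figure~\ref{fig:precurve_simplification_arcs} (\emph{``the orientation is generated by the direction of the path algebra element in the differential''}) with the explicit bidegree shifts $\gr(U) = (-2,0)$ and $\gr(Q) = (0,-2)$, but in each individual case the degree of $h$ is forced by the ambient bigrading-homogeneity of $\mc{F}(\delta^1)$, reducing the check to routine bookkeeping.
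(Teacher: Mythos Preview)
Your approach matches the paper's: endow $\mc{F}(\tI{\C_1}'\otimes\tI{\C_2})$ with the product bigrading, check move-by-move that S1, S2, T1--T3, M1--M3 induce bigrading-preserving chain isomorphisms, then apply $\mc{G}$. The paper packages the inductive hypothesis as three explicit invariants carried through the simplification: (I) generators joined by a chain of crossover arrows in $N(a)$ share bigrading; (II) the differential has homogeneous total degree $-1$; (III) $U$-arrows preserve $\gr_Q$ and $Q$-arrows preserve $\gr_U$. Your phrase ``bigrading-homogeneous of degree $-1$'' is effectively (II) together with (III), and it handles S1 and S2 exactly as you describe. Where your sketch is looser is M2 and M3: there the map $h$ links generators on \emph{adjacent strands connected by a crossover arrow}, not by a differential component, so the reason you state does not literally apply; the paper invokes invariant (I), which was established when that crossover arrow was first created by an earlier S1b/S2b step. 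This is exactly the bookkeeping you anticipate at the end, so there is no genuine gap --- but it is worth noting that the paper isolates property (I) precisely to close this case.
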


To prove this claim, the following definitions are needed:
\begin{define}
	Given a graphical representation of a precurve $(C, \{P_a\}, d)$, $x \in C.\iota_{s_1(a)}$ and $y \in C.\iota_{s_2(a)}$ are \textit{strand connected} if there is a strand in $N(a)$ running between $x$ and $y$.
\end{define}
For the definition of a strand, see Definition~\ref{def:strand}.
\begin{define}
	Given a graphical representation of a precurve $(C, \{P_a\}, d)$, two elements $x,y \in C$ on an arc $a$ are \textit{immediately connected} if there is a crossover arrow in the decomposition connecting the strands on which and $x$ and $y$ lie.
\end{define}
\begin{define} \label{def:connected}
	Given a graphical representation of a precurve $(C, \{P_a\}, d)$, two elements $x, y \in C$ on an arc $a$ are \textit{connected}  if there is a sequence of generators $\{ x= z_1, \cdots, z_n = y\}$ such that $z_i$ and $z_{i+1}$ are immediately connected.
\end{define}

Now that everything is finally in place, it is possible to prove Proposition~\ref{prop:bigrading_preserving_right_r_module_iso}.

\begin{proof}[Proof of Lemma~\ref{prop:bigrading_preserving_right_r_module_iso}]
	First it will be shown that the chain isomorphisms S1 and S2 which take $\mc{F}(\tI{\C_1}' \otimes \tI{\C_2})$ to a simply-faced precurve preserve the bigrading. Then it will be shown that the moves T1, T2, T3, M1, M2, and M3 from the arrow-sliding algorithm preserve the bigrading. Once this is shown, then it will follow that the composite chain isomorphism is a bigrading preserving chain isomorphism of right $\mc{R}$ modules.

	To show that the reduction of $\mc{F}(\tI{\C_1}' \otimes \tI{\C_2})$ to a simply-faced precurve is bigrading preserving, it will be shown that as S1 and S2 preserve the following three properties:
	\begin{enumerate}[(I)]
		\item 
			two generators in $N(a)$ are connected (in the sense of Definition~\ref{def:connected}) only if they have the same bigrading,
		\item
			the differential has homogeneous degree $-1$, and
		\item
			there is a component 
			$x.\iota_{s_1(a)} \overset{U^n}{\longrightarrow} y.\iota_{s_1(a)}$
			only if $x$ and $y$ have the same $Q$-grading, and there is a component 
			$x.\iota_{s_2(a)} \overset{Q}{\longrightarrow} y.\iota_{s_2(a)}$
			only if $x$ and $y$ have the same $U$-grading.
	\end{enumerate}
	Condition (I) is true in the case of $\mc{F}(\tI{\C_1}' \otimes \tI{\C_2})$ as no strands are connected yet. (II) and (III) are true in $\mc{F}(\tI{\C_1}' \otimes \tI{\C_2})$ by construction, and (III) is only when $\tI{\C_1}' \otimes \tI{\C_2}$ is considered as a right $\mc{R}$ module.
	
	Suppose that (I)-(III) hold true at a given step of the simplification of $\mc{F}(\tI{\C_1}' \otimes \tI{\C_2})$ to a simply-faced precurve, and S1a is about to be applied. It must be the case that S1a is being applied on $s_1(a)$, since all $Q$ arrows are of length one. Properties (II) and (III) guarantee that the chain isomorphism $(1+h)$ from Lemma~\ref{lem:s1_s_2_iso} induced by S1a is bigrading preserving. Note that the resulting precurve  preserves properties (I)-(III).
	
	Similarly, suppose that (I)-(III) hold true at a given step of the simplification to a simply-faced precurve, and S2a is about to be applied. Properties (II) and (III) guarantee that the chain isomorphism $(1+h)$ from Lemma~\ref{lem:s1_s_2_iso} induced by S1b is bigrading preserving. Note that the new crossover arrow must connect generators of equal grading by properties (II) and (III), so (I) is preserved. It is easy to check that (II) and (III) are preserved as well. 
	
	The argument to show that S2a and S2b preserve both bigrading and properties (I)-(III) is entirely parallel. This implies that $\mc{F}(\tI{\C_1}' \otimes \tI{\C_2})$ is taken to a simply-faced precurve which satisfies properties (I)-(III) by a chain isomorphism which is bigrading preserving. Before the arrow sliding algorithm can be applied, the strands must be ordered by homotoping the simply-faced precurve and applying M1. The chain isomorphism which M1 induces is given in Lemma~\ref{lem:m1_iso}, and it is a permutation of two generators which preserves properties (I)-(III) and is bigrading preserving.
	
	Now it only remains to perform the arrow sliding algorithm by successive application of T1, T2, T3, M1, M2, and M3. By the previous paragraph, it can be assumed that the initial ordered simply-faced precurve satisfies properties (I)-(III) and the total chain isomorphism from $\mc{F}(\tI{\C_1}' \otimes \tI{\C_2})$ to that precurve is bigrading preserving.
	
	T1, and T3 clearly preserve (I), (II) and (III). T2 preserves (I)-(III) in the forward direction (the deletion of adjacent identical arrows), but it is not immediatley evident that T2 preserves (I) in the reverse direction. However, a careful examination of the arrow sliding algorithm in \cite{hanselman2016bordered} will show that T2 is only applied in reverse if the two strands in question are already joined by a crossover arrow. Furthermore, the chain isomorphisms given by T1, T2, and T3 are the identity, as discussed in Lemma~\ref{lem:t1_t2_t3_iso}. It is already known that M1 preserves both bigrading and properties (I)-(III). Thus it only remains to check M2 and M3. 
	
	First it will be established that M2 preserves bigrading and properties (I)-(III). By property (I), a crossover arrow connects two strands only if the generators on each end of the strand have the same bigrading. Thus the generators connected by the crossover arrow on the LHS of M2 in figure~\ref{fig:precurve_simplification_p_a} have the same bigrading. Properties (II) and (III) guarantee that the generators connected by the crossover arrow on the RHS of M2 have the same bigrading as well. Therefore the isomorphism $(1 + h_1 + h_2)$ from Lemma~\ref{lem:m2_iso} which is induced by M2 is bigrading preserving. Also, properties (I)-(III) are preserved by M2. 
	
	Now it will be established that M3 preserves bigrading and properties (I)-(III). By property (I), the generators on the LHS of M3a, M3b, and M3c have the same bigrading. Thus the isomorphism $(1+h)$ described in Lemma~\ref{lem:m3_iso} is bigrading preserving. It is straightforward to verify that properties (I)-(III) are preserved by M3a, M3b, and M3c. 
	
	Now it has been shown that the chain isomorphism taking $\mc{F}(\tI{\C_1}' \otimes \tI{\C_2})$ to $\gamma(\tI{\C_1}' \otimes \tI{\C_2})$ is bigrading preserving. It is also a morphism of right $\mc{R}$ modules. If the composite map is denoted $\Psi$, then $\mc{G}(\Psi)$ is the desired bigrading preserving chain isomorphism of right $\mc{R}$ modules.
\end{proof}

This last lemma will be used to identify the final generator in the proof of Theorems~\ref{thm:p_homo} and \ref{thm:p_omega_homo}.

\begin{lem} \label{lem:standard_complex_fu_fq_homology_generators}
	For any standard complex $\C$, $H_*(\tCI; \F[Q])$ has a single tower generated by $[x_f(\C)]$. 
\end{lem}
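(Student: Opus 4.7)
The plan is to reduce the computation of $H_*(\tCI; \F[Q])$ for a standard complex $\C = \C(a_1, b_2, \ldots, a_{2n-1}, b_{2n})$ with generators $T_0, \ldots, T_{2n}$ to a direct calculation on the small subcomplexes picked out by the $\omega$-arrows. The first preliminary step is to verify that any standard complex is reduced in the sense of Definition~\ref{def:reduced}: each nonzero component of $\p$ is a power $U^{|b_{2i}|}$ with $|b_{2i}| \geq 1$, so $\p \equiv 0 \mod U$. This guarantees that $\tCI$ is defined and that the $\F[Q]$-differential $\delta^1_{\tCI} \otimes \id_{\F[U,Q]/(U)}$ coincides with $Q\omega$ on $\F[Q]\langle T_0, \ldots, T_{2n}\rangle$.

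Next, I would decompose this $\F[Q]$-complex as a direct sum. The parameters $a_1, a_3, \ldots, a_{2n-1}$ pair up the generators $T_0, \ldots, T_{2n-1}$ into $\omega$-linked pairs $(T_{2i}, T_{2i+1})$ for $i = 0, \ldots, n-1$, while $T_{2n}$ is isolated since there is no $a_{2n+1}$ parameter. A quick calculation suffices on each paired subcomplex: if $a_{2i+1} = +$ then $Q\omega$ sends $T_{2i+1} \mapsto Q T_{2i}$, and the homology is $\F$ generated by $[T_{2i}]$ with $Q[T_{2i}] = 0$; if $a_{2i+1} = -$, the roles of $T_{2i}$ and $T_{2i+1}$ are swapped. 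Either way, each paired subcomplex contributes only $Q$-torsion to homology. The subcomplex on $T_{2n}$ alone has trivial differential and contributes a free $\F[Q]$-tower generated by $[T_{2n}]$.

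To conclude, I would identify the generator of this tower with the final generator: since $T_{2n}$ has neither an incoming nor an outgoing $\omega$-arrow, it is the unique generator lying in $\ker Q\omega$ with no $Q^k T_{2n}$ in $\im Q\omega$, matching the definition of $x_f(\C)$ recalled at the start of this section. Putting the pieces together, $H_*(\tCI; \F[Q])$ splits as a single free $\F[Q]$-tower generated by $[x_f(\C)]$ together with $Q$-torsion summands, as claimed. I do not anticipate substantial obstacles; the main points are the reduced property (which trivializes the $\p$ contribution over $\F[Q]$) and careful bookkeeping of the sign conventions $a_{2i+1} \in \{+,-\}$ that determine which end of each $\omega$-linked pair survives as $Q$-torsion.
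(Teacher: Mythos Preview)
Your proposal is correct and follows essentially the same approach as the paper: observe that $\delta^1/(U) = Q\omega$ (since $\p \equiv 0 \bmod U$ for a standard complex), then read off the homology from the $\omega$-pairing of $T_0,\ldots,T_{2n-1}$ leaving $T_{2n}=x_f(\C)$ as the sole tower generator. The paper's version is considerably terser---it states the identification $\delta^1/(U)=Q\omega$ and illustrates with an example---while you have written out explicitly the decomposition into two-generator subcomplexes and the torsion/tower computation, but the content is the same.
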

\begin{proof}
	In the case of standard complexes, $\delta^1 = \p + Q \omega$, so $\delta^1/(U) = Q \omega$ and $H_*(\tCI; \F[Q])$ has a single tower generated by $[x_f(\C)]$. 
	If, for example 
		$$\C = \C(-,-2,-,3, \cdots, -7,+,-1),$$ 
	then 
	\begin{center}
		\begin{tikzcd}[sep=small]
		\tCI = C(\tCI; \mc{R}) = \bullet_{x_i(\C)} \arrow[r,red] & \bullet \arrow[r,"2"] & \bullet \arrow[r,red] & \bullet & \arrow[l, "3"] \cdots \arrow[r, "7"] & \bullet & \bullet \arrow[l,red] \arrow[r, "1"] & \bullet_{x_f(\C)} \\
		C(\tCI; \F[Q]) = \bullet_{x_i(\C)} \arrow[r,red] & \bullet  & \bullet \arrow[r,red] & \bullet & \cdots& \bullet & \bullet \arrow[l,red]  & \bullet_{x_f(\C)}
		\end{tikzcd}
	\end{center}
	and clearly $Q^{-1} H_*(\tCI; \F[Q])$ is generated by $[x_f(\C)]$.
\end{proof}

\begin{prop} \label{prop:chain_iso_preserves_bigrading_of_final_generator}
	Given two almost $\iota$-complexes $\C_1$ and $\C_2$, let $\gamma(\tI{\C_1}' \otimes \tI{\C_2})$ be the multicurve obtained from reducing the bigraded right $\mc{R}$-module structure of $\tI{\C_1}' \otimes \tI{\C_2}$. Then 	
	$$\gr_{\circ}(x_f(\C_1) \otimes x_f(\C_2)) = \gr_{\circ}(x_f(\gamma_0(\tI{\C_1}' \otimes \tI{\C_2})))$$
	for $\circ \in \{U,Q\}$.
\end{prop}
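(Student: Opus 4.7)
The plan is to apply the bigrading preserving chain isomorphism of right $\R$-modules
$$\Psi \colon \tI{\C_1}' \otimes \tI{\C_2} \xrightarrow{\cong} \mc{G}(\gamma(\tI{\C_1}' \otimes \tI{\C_2}))$$
from Proposition~\ref{prop:bigrading_preserving_right_r_module_iso}, and to compare the class $[x_f(\C_1) \otimes x_f(\C_2)]$ on the source with $[x_f(\gamma_0)]$ on the target using $Q$-localized homology. The multicurve $\gamma$ splits as $\gamma_0 \sqcup \bigsqcup_{i \geq 1} \gamma_i$, inducing a splitting of right $\R$-complexes $\mc{G}(\gamma) \cong \tI{\C^*} \oplus \mc{B}$, where $\C^*$ is the standard complex that $\gamma_0$ reduces to under $Q^2$-reduction and $\mc{B}$ collects the contributions of the remaining $\gamma_i$.

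First I would verify that $[x_f(\C_1) \otimes x_f(\C_2)]$ represents a nonzero, non-$Q$-divisible class in $H_*(\tI{\C_1}' \otimes \tI{\C_2}; \F[Q])$. The cycle condition follows from $\omega_i x_f(\C_i) = 0$ in a standard complex, which gives $\biota_i x_f(\C_i) = x_f(\C_i)$ and hence $(1 + \biota_1 \otimes \biota_2)(x_f(\C_1) \otimes x_f(\C_2)) = 0$ in the model $\tI{(\C_1 * \C_2)}$ supplied by Theorem~\ref{thm:tensor_product_formula_for_almost_iota_complexes}. Non-$Q$-divisibility follows because $x_f(\C_1) \otimes x_f(\C_2)$ is a ``constant in $Q$'' basis element of the $\F[Q]$-reduction, while the tensor relations $Q\biota_1 x \otimes y = x \otimes Q y$ and the boundaries $\delta^1/(U) = Q\omega$ all carry an explicit factor of $Q$. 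Descending $\Psi$ to $\Psi_Q$ via Lemma~\ref{lem:r_chain_iso_descent}, and using Lemma~\ref{lem:standard_complex_fu_fq_homology_generators} to identify $[x_f(\gamma_0)]$ as the unique tower generator of $Q^{-1} H_*(\tI{\C^*}; \F[Q])$ at bigrading $\gr(x_f(\gamma_0))$, I can write
$$\Psi_Q([x_f(\C_1) \otimes x_f(\C_2)]) = \alpha + \beta$$
with $\alpha \in H_*(\tI{\C^*}; \F[Q])$ and $\beta \in H_*(\mc{B}; \F[Q])$, both bi-homogenous of bigrading $g_\otimes := \gr(x_f(\C_1)) + \gr(x_f(\C_2))$ by bigrading preservation.

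If $\alpha$ is not $Q$-divisible, then uniqueness of the tower forces $\alpha = \lambda [x_f(\gamma_0)]$ for some $\lambda \in \F^{\times}$, and bigrading preservation yields $g_\otimes = \gr(x_f(\gamma_0))$ for both $\circ \in \{U, Q\}$, finishing the proof. The main obstacle is to rule out the scenario in which $\alpha$ is $Q$-divisible while $\beta$ alone carries all of the non-$Q$-divisibility of the total class. To close this gap, I would combine two ingredients: first, the fact that $\gamma_0$ is the unique component of $\gamma$ reaching the $U$-puncture (Lemma~\ref{lem:u_puncture_gives_nontorsion_homology}), which together with Theorem~\ref{thm:tensor_product_formula_for_almost_iota_complexes} forces the $U$-tower generator $[x_i(\C_1) \otimes x_i(\C_2)]$ to map through $\Psi$ onto $[x_i(\gamma_0)] \in \tI{\C^*}$; and second, a bigrading analysis of the $\mc{B}$-tower generators, which live on immersed curves $\gamma_i$ ($i \geq 1$) with both boundary components on the $Q$-puncture, showing that no non-$Q$-divisible class of $\mc{B}$ can occur at bigrading $g_\otimes$. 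Together these force $\alpha$ itself to be non-$Q$-divisible, completing the argument.
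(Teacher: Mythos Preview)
Your overall strategy is sound and matches the paper's: use the bigrading-preserving isomorphism $\Psi$ from Proposition~\ref{prop:bigrading_preserving_right_r_module_iso}, pass to $\F[Q]$-homology via Lemma~\ref{lem:r_chain_iso_descent}, and compare tower generators. The gap you yourself flag, however, is real, and your proposed resolution does not close it. You assert that the curves $\gamma_i$ for $i\geq 1$ have both ends on the $Q$-puncture and then appeal to an unspecified ``bigrading analysis'' to rule out non-$Q$-divisible classes in $\mc{B}$ at bigrading $g_\otimes$. Neither step is justified: a priori some $\gamma_i$ could be closed, and for those that do end on the $Q$-puncture you give no mechanism for controlling their bigradings.

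The paper avoids this entirely by computing the \emph{rank} of $H_*(\tI{\C_1}'\otimes\tI{\C_2};\F[Q])$ directly. The crucial observation is that, viewed as a \emph{right} $\mc R$-module, the differential
\[
\delta^1_{\tI{\C_1}'\otimes\tI{\C_2}} = \p_1\otimes 1 + 1\otimes \p_2 + (\omega_1\otimes 1 + 1\otimes\omega_2)\cdot Q
\]
obeys the Leibniz rule. Reducing mod $U$ and applying the K\"unneth formula over $\F[Q]$ gives
\[
H_*(\tI{\C_1}'\otimes\tI{\C_2};\F[Q]) \cong H_*(\tI{\C_1}';\F[Q]) \otimes H_*(\tI{\C_2};\F[Q]),
\]
which by Lemma~\ref{lem:standard_complex_fu_fq_homology_generators} has rank exactly one, with the single tower generated by $[x_f(\C_1)\otimes x_f(\C_2)]$. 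Transporting through the bigraded isomorphism $(\Psi_Q)_*$, the target $H_*(\mc G(\gamma);\F[Q])$ also has rank one, so $\mc B$ contributes \emph{no} $Q$-towers at all and $\beta$ is necessarily torsion. The unique tower generator on the target is $[x_f(\gamma_0)]$, and since $(\Psi_Q)_*$ is bigraded and must send tower generator to tower generator, the bigrading equality follows immediately. This K\"unneth step is the missing idea; once you have it, the splitting $\alpha+\beta$ and the subsequent case analysis become unnecessary.
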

\begin{proof}
	By Proposition~\ref{prop:bigrading_preserving_right_r_module_iso}, there is a bigrading preserving chain isomorphism of right $\mc{R}$ modules 
	$$\Phi : \tI{\C_1}' \otimes \tI{\C_2} \to \mc{G}(\gamma(\tI{\C_1}' \otimes \tI{\C_2})).$$
	Since $\Phi$ is a chain isomorphism, 
	$$\Phi_* : H_* (\tI{\C_1}' \otimes \tI{\C_2}; \mc{R}) \to H_*(\mc{G}(\gamma(\tI{\C_1}' \otimes \tI{\C_2})); \mc{R})$$
	is an isomorphism of right $\mc{R}$ modules. By Lemma~\ref{lem:r_chain_iso_descent}, 
	$$(\Phi_Q)_* : H_* (\tI{\C_1}' \otimes \tI{\C_2}; \F[Q]) \to H_*(\mc{G}(\gamma(\tI{\C_1}' \otimes \tI{\C_2})); \F[Q])$$
	is an isomorphism of right $\F[Q]$ modules. Note that as a right $\mc{R}$ module, the differential in $\tI{\C_1}' \otimes \tI{\C_2}$ obeys the Leibniz rule, as
	$$\delta^1_{\tI{\C_1}' \otimes \tI{\C_2}} = \p_1 \otimes 1 + 1 \otimes \p_2 + (\omega_1 \otimes 1 + 1 \otimes \omega_2) \cdot Q.$$
	Quotienting by $U$ and applying the K\"unneth formula gives that as right $\mc{R}$ modules $H_*(\tI{\C_1}' \otimes \tI{\C_2} ; \F[Q]) \cong H_*(\tI{\C_1}' ; \F[Q]) \otimes H_*(\tI{\C_2} ; \F[Q])$. By Lemma~\ref{lem:standard_complex_fu_fq_homology_generators}, it follows that 
	the only tower in $H_* (\tI{\C_1}' \otimes \tI{\C_2}; \F[Q]) $ is generated by $[x_f(\C_1) \otimes x_f(\C_2)]$. Therefore the only tower in $H_*(\mc{G}(\gamma(\tI{\C_1}' \otimes \tI{\C_2})); \F[Q])$ is generated by $[x_f(\gamma_0(\tI{\C_1}' \otimes \tI{\C_2}))]$. It then must be that
	$$(\Phi_Q)_*[x_f(\C_1) \otimes x_f(\C_2)] = [x_f(\gamma_0(\tI{\C_1}' \otimes \tI{\C_2}))],$$
	which proves that 
	$$\gr_{\circ}(x_f(\C_1) \otimes x_f(\C_2)) = \gr_{\circ}(x_f(\gamma_0(\tI{\C_1}' \otimes \tI{\C_2})))$$
	for $\circ \in \{U,Q\}$.
\end{proof}

\begin{prop} \label{prop:left_and_right_actions_have_same_primitive_rep}
	For any standard complexes $\C_1$ and $\C_2$, let $\gamma^{\text{left}}(\tI{\C_1}' \otimes \tI{\C_2})$ and $\gamma^{\text{right}}(\tI{\C_1}' \otimes \tI{\C_2})$ be the multicurves obtained from the left and right $\mc{R}$ modules structures of $\tI{\C_1}' \otimes \tI{\C_2}$. Then
	$$\gamma^{\text{left}}_0(\tI{\C_1}' \otimes \tI{\C_2}) \cong \gamma_0^{\text{right}}(\tI{\C_1}' \otimes \tI{\C_2}).$$
\end{prop}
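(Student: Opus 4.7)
The plan is to reduce both sides to the primitive representative of $\C_1 * \C_2$ and then invoke the uniqueness clause of Theorem~\ref{thm:primitive_rep_of_almost_iota_complex}. By Theorem~\ref{thm:tensor_product_formula_for_almost_iota_complexes}, the left $\mc{R}$-module $\tI{\C_1}' \otimes \tI{\C_2}$ is isomorphic to $\tI{(\C_1 * \C_2)}$ as an $\mc{R}$-chain complex, so by definition $\gamma^{\text{left}}_0(\tI{\C_1}' \otimes \tI{\C_2})$ is the (unique) primitive representative of $\C_1 * \C_2$. The task therefore reduces to showing that $\gamma^{\text{right}}_0(\tI{\C_1} \otimes \tI{\C_2})$ $Q^2$-reduces to an almost $\iota$-complex in the same local equivalence class.

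The strategy for the right-module side is to exchange the roles of the two tensor factors using the commutativity of $*$ on $\hI$. The symmetry $\C_1 * \C_2 \cong \C_2 * \C_1$ combined with Theorem~\ref{thm:tensor_product_formula_for_almost_iota_complexes} gives a chain isomorphism of left $\mc{R}$-modules $\tI{(\C_1 * \C_2)} \cong \tI{\C_2}' \otimes \tI{\C_1}$. I would then produce a swap-type isomorphism, given on generators by $x \otimes y \mapsto y \otimes x$ (possibly twisted by $\biota_2$ to compensate for the asymmetry between the primed and unprimed module), identifying the left action on $\tI{\C_2}' \otimes \tI{\C_1}$ with the right action on $\tI{\C_1} \otimes \tI{\C_2}$; the displayed right-module differential from the corollary preceding Proposition~\ref{prop:bigrading_preserving_right_r_module_iso} is set up precisely so that this swap intertwines the two differentials. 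Because the multicurve construction depends only on the chain-complex/$\mc{R}$-module data, the swap carries the reduced multicurve for one structure to that of the other, and in particular identifies their primitive components.

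Combining the two steps, $\gamma^{\text{right}}_0(\tI{\C_1} \otimes \tI{\C_2})$ $Q^2$-reduces to the standard complex representative of $\C_2 * \C_1 = \C_1 * \C_2$, and Theorem~\ref{thm:primitive_rep_of_almost_iota_complex} yields the required isomorphism with $\gamma^{\text{left}}_0(\tI{\C_1}' \otimes \tI{\C_2})$. The main obstacle is making the swap identification precise: one must check that the defining tensor relations $Q\biota_i x \otimes y = x \otimes Qy$ of the primed tensor product correspond under the swap exactly to the $\mc{R}$-balancing relations on the unprimed tensor product once the roles of left and right actions are exchanged, and that this correspondence survives the reduction moves S1, S2, T1--T3, M1--M3 used to produce the multicurve. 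Given the bigrading-preservation arguments already developed in Proposition~\ref{prop:bigrading_preserving_right_r_module_iso}, this verification should follow the same template.
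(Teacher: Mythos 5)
Your opening and closing steps are fine: the tensor product formula identifies the left module structure on $\tI{\C_1}' \otimes \tI{\C_2}$ with $\tI{(\C_1 * \C_2)}$, and uniqueness of the primitive representative would close the argument once both sides are known to $Q^2$-reduce to the same local equivalence class. The gap is the middle step: the swap isomorphism you posit does not exist. Over $\F_2$ one has $1 + \biota_1 \otimes \biota_2 = \omega_1 \otimes 1 + 1 \otimes \omega_2 + \omega_1 \otimes \omega_2$, so the right-module differential of $\tI{\C_1}' \otimes \tI{\C_2}$ (whose $Q$-part is the Leibniz expression $(\omega_1 \otimes 1 + 1 \otimes \omega_2)\cdot Q$) and the left-module differential of $\tI{\C_2}' \otimes \tI{\C_1}$ (whose $Q$-part is $Q \cdot (1 + \biota_2 \otimes \biota_1)$, by the tensor product formula) differ, after transposing factors, by exactly the cross term $Q\,\omega_1 \otimes \omega_2$. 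A direct check mod $U$ shows that no map of the form $x \otimes y \mapsto \sigma(y) \otimes \tau(x)$ with $\sigma, \tau$ twists by powers of $\biota$ intertwines the two differentials: the discrepancy $\omega_2\sigma \otimes \tau + \sigma \otimes \tau\omega_1 + \sigma \otimes \tau + \biota_2\sigma \otimes \biota_1\tau$ cannot vanish for invertible $\sigma,\tau$ unless $\omega_1$ or $\omega_2$ is degenerate. There is also a structural obstruction: since the left structure of $\tI{\C_2}' \otimes \tI{\C_1}$ is chain isomorphic to $\tI{(\C_2 * \C_1)} \cong \tI{(\C_1 * \C_2)} \cong$ the left structure of $\tI{\C_1}' \otimes \tI{\C_2}$, your swap would yield a chain isomorphism between the full left and right multicurves $\gamma^{\text{left}}$ and $\gamma^{\text{right}}$ — which the remark immediately following this proposition says is expected to fail in general. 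Only the primitive components agree, and that is precisely what must be proved.

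The cross term $Q\,\omega_1 \otimes \omega_2$ is therefore the whole content of the proposition, and the paper handles it geometrically rather than algebraically. Starting from $\gamma^{\text{right}}(\tI{\C_1}' \otimes \tI{\C_2})$, passing to the left structure adds to the differential the components coming from $Q\,\omega_1 \otimes \omega_2$, each of which appears as a new length-one arc around the $Q$-puncture. These are pushed into $N(a)$ by the reverses of S1b and S2b and then slid off the primitive component using T3, M2, and M3, the process terminating because $\gamma_0^{\text{right}}$ is a single undecorated curve joining the $U$- and $Q$-punctures. The sliding may add decoration to the non-primitive components (hence the caveat in the remark), but leaves the primitive component unchanged, after which uniqueness finishes the proof. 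To repair your argument you would need to supply an analogue of this step — some mechanism that absorbs the cross term without altering the primitive component — rather than asserting that a (twisted) swap intertwines the differentials.
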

\begin{proof} 
	In the the proof of Proposition~\ref{prop:bigrading_preserving_right_r_module_iso}, the precurve corresponding to the right $\mc{R}$-module structure of $\tI{\C_1}' \otimes \tI{\C_2}$ was reduced to a multicurve $\gamma^{\text{right}}(\tI{\C_1}' \otimes \tI{\C_2})$ via the arrow sliding algorithm. Denote the chain isomorphism (and \textit{right} $\mc{R}$ module morphism) from $\tI{\C_1}' \otimes \tI{\C_2}$ to $\mc{G}(\gamma^{\text{right}}(\tI{\C_1}' \otimes \tI{\C_2}))$ by $\Phi$. 
	Recall that
	$$\omega_{\tI{\C_1}' \otimes \tI{\C_2}} \cdot Q = (\omega_1 \otimes 1 + 1 \otimes \omega_2) \cdot Q$$
	and
	$$Q \cdot \omega_{\tI{\C_1}' \otimes \tI{\C_2}} = Q \cdot (\omega_1 \otimes 1 + 1 \otimes \omega_2 + \omega_1 \otimes \omega_2).$$
 	Converting $\mc{G}(\gamma^{\text{right}}(\tI{\C_1}' \otimes \tI{\C_2}))$ to a left $\mc{R}$-module would involve including the contribution of $Q \omega_1 \otimes \omega_2$ in the new basis. As a precurve, this would add several new arcs around the $Q$-puncture to $\gamma^{\text{right}}(\tI{\C_1}' \otimes \tI{\C_2})$. Denote the new precurve obtained after $Q \omega_1 \otimes \omega_2$ is included by $M^{\text{left}}$. Note that $\Phi$ extends linearly to give a chain isomorphism of \textit{left} $\mc{R}$ modules $\Phi': \tI{\C_1}' \otimes \tI{\C_2} \to \mc{G}(M^{\text{left}})$.
 	
	Each of the added arcs in $M^{\text{left}}$ are length one wraps around the $Q$-puncture, and can be pushed into $N(a)$ via the moves S1b and S2b. These new arrows can be slid off of the primitive curve $\gamma^{\text{right}}_0(\tI{\C_1}' \otimes \tI{\C_2})$ by bringing each new arrow closer to a puncture until its endpoints diverge using T3, M2, and M3. Since $\gamma^{\text{right}}_0$ is a single curve connecting the $U$ and $Q$ punctures, this process will terminate. Note also that this sliding process could very easily add decoration to the other curves which are not primitive, but this is of no consequence up to local equivalence. The resulting primitive representative will be $\gamma^{\text{left}}_0(\tI{\C_1}' \otimes \tI{\C_2})$ since the chain isomorphism type of the primitive representative is unique up to local equivalence. Thus 
	$$\gamma^{\text{left}}_0(\tI{\C_1}' \otimes \tI{\C_2}) \cong \gamma_0^{\text{right}}(\tI{\C_1}' \otimes \tI{\C_2}).$$
\end{proof}

\begin{rem}
	Note also that the resulting chain isomorphism obtained by pushing off all of the new arrows with endpoints on the primitive representative may no longer be bigrading preserving, but this is also of no consequence because it does not change the isomorphism type of the primitive representative. Thus it may not be true that $\gamma^{\text{left}}(\tI{\C_1}' \otimes \tI{\C_2}) \cong \gamma^{\text{right}}(\tI{\C_1} \otimes \tI{\C_2})$.
\end{rem}

It can now be proven that $P$ and $P_\omega$ are homomorphisms.

\begin{proof}[Proof of Theorems \ref{thm:p_homo} and \ref{thm:p_omega_homo}]
	Again, let $\gamma_0^{\text{left}}(\tI{\C_1}' \otimes \tI{\C_2})$ and $\gamma_0^{\text{right}}(\tI{\C_1}' \otimes \tI{\C_2})$ obtained from the left and right $\mc{R}$ modules structures of $\tI{\C_1}' \otimes \tI{\C_2}$. By Proposition~\ref{prop:chain_iso_preserves_bigrading_of_final_generator}, it follows that
	$$\gr_{U}(x_f(\gamma_0^{\text{right}}(\tI{\C_1}' \otimes \tI{\C_2}))) = \gr_{U} (x_{f}(\C)) + \gr_U ( x_f(\C))$$
	and
	$$\gr_{Q}(x_f(\gamma_0^{\text{right}}(\tI{\C_1}' \otimes \tI{\C_2}))) = \gr_{Q} (x_{f}(\C)) + \gr_Q (x_f(\C)).$$
	Since $\gamma^{\text{left}}_0(\tI{\C_1}' \otimes \tI{\C_2}) \cong \gamma_0^{\text{right}}(\tI{\C_1}' \otimes \tI{\C_2})$ by Proposition~\ref{prop:left_and_right_actions_have_same_primitive_rep}, it then follows that 
	$$P(\C_1 \otimes \C_2) = P(\C_1) + P(\C_2) \qquad \text{ and } \qquad P_{\omega}(\C_1 \otimes \C_2) = P_{\omega}(\C_1) + P_\omega(\C_2)$$
	as the primitive representative of $\C_1 * \C_2$ is given by $\gamma_0^{\text{left}}(\tI{\C_1}' \otimes \tI{\C_2}).$
\end{proof}

\subsection{Shift maps and $\phi_n$ with precurves} \label{subsection:shift_maps_and_phi_n_with_precurves}
Now it will be reproven that each $\phi_n$ is a homomorphism. All that is left to do is to show that the shift maps from \cite{dai2018infiniterank} are endomorphisms of $\hI$. 
An equivalent definition of the shift maps $\sh_n$ can be given as follows:

\begin{define}
	Given any reduced precurve $M$ on the twice punctured disk, $\sh_n(M)$ replaces every arc around the $U$ puncture of length $m \geq n$ with an arc of length $m+1$.
\end{define}

Note that the $Q^2$-reduction of $\sh_n(\mc{F}(\tCI))$ is $\sh_n(\C)$ as defined in \cite{dai2018infiniterank} for any reduced almost $\iota$-complex $\C$.

\begin{thm}[\cite{dai2018infiniterank}] \label{thm:sh_n_endo}
	$\sh_n$ is an endomorphism of $\hI$ for $n \geq 1$. 
\end{thm}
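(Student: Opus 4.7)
The plan splits into two parts: showing $\sh_n$ descends to a well-defined map on local equivalence classes, and showing it respects the tensor product.

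For well-definedness, I would start by replacing $\C$ with a reduced representative via Lemma \ref{lem:almost_iota_locally_equivalent_to_reduced}, so that $\tI{\C}$ makes sense as a type $D$ structure over $\R$. By Theorem \ref{thm:primitive_rep_of_almost_iota_complex}, every local equivalence class has a unique primitive representative $\gamma_0(\tI{\C})$, which is an undecorated immersed curve from the $U$-puncture to the $Q$-puncture; on such a curve each $U$-arc has an unambiguous integer length, so the rule ``replace every arc of length $m \geq n$ with one of length $m+1$'' is a well-defined combinatorial modification. Any reduced precurve representing $\C$ agrees with $\gamma_0(\tI{\C})$ up to extra components $\{(\gamma_i, X_i)\}_{i \geq 1}$, which split off in the $Q^2$-reduction by Lemma \ref{lem:deleting_summands_almost_iota}; the action of $\sh_n$ on these components is therefore irrelevant to the local equivalence class of the output.

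For the homomorphism property, I would apply the tensor product formula (Theorem \ref{thm:tensor_product_formula_for_almost_iota_complexes}) to identify $\tI{\C_1 * \C_2}$ with $\tI{\C_1}' \otimes \tI{\C_2}$ as an $\R$-complex, whose differential splits as $(\p_1 \otimes 1 + 1 \otimes \p_2) + (\omega_1 \otimes 1 + 1 \otimes \omega_2)Q$. Because $UQ = 0$ in $\R$, every positive power of $U$ in this differential originates in $\p_1 \otimes 1$ or $1 \otimes \p_2$, and never in the $\omega$-part. Hence each $U$-arc of length $m$ in $\mc{F}(\tI{\C_i})$ contributes parallel $U$-arcs of length $m$ in the precurve $\mc{F}(\tI{\C_1}' \otimes \tI{\C_2})$, and these are the only $U$-arcs present. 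The shift operation therefore commutes with $\otimes$ at the precurve level, giving
\[
\sh_n\bigl(\mc{F}(\tI{\C_1 * \C_2})\bigr) \;\cong\; \mc{F}\bigl(\tI{\sh_n(\C_1)}' \otimes \tI{\sh_n(\C_2)}\bigr) \;\cong\; \mc{F}\bigl(\tI{\sh_n(\C_1) * \sh_n(\C_2)}\bigr),
\]
and passing to primitive representatives yields the required local equivalence $\sh_n(\C_1 * \C_2) \sim \sh_n(\C_1) * \sh_n(\C_2)$.

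The main obstacle I expect is tracking $U$-arc lengths through the reduction of the tensor product precurve to a multicurve, since the arc-reductions S1a and S2a (Figure \ref{fig:precurve_simplification_arcs}) subtract arc lengths in precisely the regime where $\sh_n$ acts, so $\sh_n$ need not a priori commute with these moves. My expected fix is that in the particular precurves $\mc{F}(\tI{\C_1}' \otimes \tI{\C_2})$ every such S1/S2 reduction pairs a genuine factor $U$-arc of length $m$ with a parallel copy of itself, so the subtraction produces either a zero-length arc and a crossover arrow or else preserves the length $m$ in the surviving arc; either way the $\sh_n$-before and $\sh_n$-after arc multisets can be matched bijectively. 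If a direct inductive reduction proves unwieldy, an alternative is to bypass the precurve matching entirely and verify combinatorially on primitive representatives that the $(a_i, b_i)$-sequence of the standard complex for $\sh_n(\C_1 * \C_2)$ agrees with that for $\sh_n(\C_1) * \sh_n(\C_2)$, using the classification in Theorem \ref{thm:primitive_rep_of_almost_iota_complex} to reduce the claim to this combinatorial identity.
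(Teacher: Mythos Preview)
Your overall architecture is close to the paper's, and your displayed identity $\sh_n(\mc{F}(\tI{\C_1*\C_2})) \cong \mc{F}(\tI{\sh_n(\C_1)*\sh_n(\C_2)})$ at the level of unsimplified precurves is correct (and in fact follows directly from the Leibniz rule $\p = \p_1\otimes 1 + 1\otimes\p_2$; invoking the bimodule formula Theorem~\ref{thm:tensor_product_formula_for_almost_iota_complexes} is unnecessary here). Where you diverge from the paper is in the resolution of the obstacle you correctly isolate, namely whether $\sh_n$ commutes with the passage from precurve to multicurve.

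Your description of that obstacle contains a misconception: the moves S1a and S2a do not \emph{subtract} arc lengths. In S1a the input is two arcs of lengths $m>n$ out of a common generator, and the output retains the arc of length $n$; the quantity $m-n$ appears only in the homotopy $h$ of Lemma~\ref{lem:s1_s_2_iso}, not in the surviving arc. Once you see this, the fix is much cleaner than what you propose: every move in Figures~\ref{fig:precurve_simplification_arcs} and \ref{fig:precurve_simplification_p_a} is governed purely by comparisons between arc lengths (whether $n<m$ or $n=m$), and the surviving arc lengths are always among the input lengths. Since $\sh_n$ is order-preserving and injective on $\Z_{\geq 1}$, it preserves all such comparisons, so \emph{exactly the same sequence of moves} that simplifies $\mc{F}(\tI{(\C_1*\C_2)})$ to $\gamma(\tI{(\C_1*\C_2)})$ also simplifies $\sh_n(\mc{F}(\tI{(\C_1*\C_2)}))$ to $\sh_n(\gamma(\tI{(\C_1*\C_2)}))$. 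This is precisely the paper's one-line argument (``since the relative lengths of the generators and arcs do not change, the exact same moves \ldots''). Your proposed fix---that every S1/S2 reduction pairs an arc with a parallel copy of itself---is false in general (arcs of different lengths from the same generator are exactly what S1a/S2a handle) and your fallback of comparing $(a_i,b_i)$-sequences combinatorially would essentially re-derive the DHST proof and forfeit the advantage of the precurve picture.
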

\begin{proof}
	Consider the sequence of moves S1, S2, T1, T2, T3, M1, M2, and M3 taking $\mc{F}(\tI{(\C_1*\C_2)})$ to $\gamma(\tI{(\C_1 * \C_2)})$. 
	Since the relative lengths of the generators and arcs do not change, the exact same moves clearly take $\mc{F}(\tI{(\sh_n(\C_1) * \sh_n(\C_2))})$ to $\sh_n(\gamma((\C_1 * \C_2)\mc{I}))$. Thus $\gamma_0(\tI{(\sh_n(\C_1) * \sh_n(\C_2))}) = \sh_n(\gamma_0((\C_1 * \C_2)\tilde{\mc{I}}))$. This proves that $\sh_n(\C_1) * \sh_n(\C_2)$ is locally equivalent to $\sh_n(\C_1 * \C_2)$, which shows that $\sh_n$ is a homomorphism.
\end{proof}

\begin{proof}[Proof of Theorem \ref{thm:phi_n_homo}]
	Using Theorems \ref{thm:p_homo} and \ref{thm:sh_n_endo}, the rest is exactly as in \cite{dai2018infiniterank}. 
	By counting the length of the differentials, it can be shown that 
	$$P(\C) = \sum_{i=1}^{\infty} (-2i+1) \phi_i(\C),$$
	and
	$$P(\sh_n(\C)) = P(\C) - 2 \sum_{i=n}^{\infty} \phi_i(\C),$$
	from which it can quickly be shown with the inductive argument given in \cite{dai2018infiniterank} that $\phi_n : \hI \to \Z$ is a homomorphism for all $n$. 
\end{proof}

\begin{proof}[Proof of Theorem \ref{thm:p_omega_independent}]
		Suppose $P_\omega$ were in the span of $\{ \phi_n \}$. Then $P_\omega = \sum_{n=1}^{\infty} a_n \phi_n$. Since $P_\omega(\C(+,k)) = 1$ for all positive $k$, and $\phi_n(\C(+,k))$ is nonzero if and only if $n=k$, it follows $a_n = 1$ for all $n$. If this is the case, then $P_\omega(\C(-,k))= \sum_{n=1}^{\infty} \phi_n(\C(-,k)) = 1$, which is a contradiction since $P_\omega(\C(-,k)) = -1$. Surjectivity follows from the fact that any number of $a_i = +$ or $a_i = -$ can appear in a standard complex.
\end{proof}

\bibliographystyle{alpha}
\bibliography{biblio_no_links}

\end{document}